\documentclass[11pt]{article}
\usepackage{IEEEtrantools}
\usepackage{nccmath}
\usepackage{mathtools}
\usepackage{physics}
\usepackage{latexsym}
\usepackage{amsfonts}
\usepackage{amssymb}
\usepackage{comment}
\usepackage{eqnarray, amscd, amsmath,amsfonts,amsthm}
\usepackage{mathrsfs}
\usepackage{enumerate}
\usepackage{dsfont}
\usepackage{hyperref}
\usepackage{graphicx}
\usepackage{epstopdf}
\usepackage{tikz}
\usepackage[T1]{fontenc}

\usetikzlibrary{shapes,arrows.meta,positioning}
\usepackage{cleveref}
\usepackage{tikz-cd}
\usepackage{psfrag}

\usepackage{multirow}
\usepackage{tikz}
\usetikzlibrary{arrows,decorations.markings}
\usepackage{bookmark}
\usepackage{hyperref}
\hypersetup{
     colorlinks   = true,
     citecolor    = blue
}

\newtheorem{dfn}{Definition}[section]
\newtheorem{thm}[dfn]{Theorem}
\newtheorem{lmma}[dfn]{Lemma}
\newtheorem{ppsn}[dfn]{Proposition}
\newtheorem{crlre}[dfn]{Corollary}
\newtheorem{xmpl}[dfn]{Example}
\newtheorem{rmrk}[dfn]{Remark}
\newtheorem{notation}[dfn]{Notation}

\newcommand{\la}{{\lambda}}
\newcommand{\bbc}{\mathbb{C}}
\newcommand{\bbz}{\mathbb{Z}}
\newcommand{\bbn}{\mathbb{N}}

\newcommand{\sk}{\mathcal{K}}
\newcommand{\cK}{\mathcal{K}}
\newcommand{\cB}{\mathcal{B}}
\newcommand{\cb}{\mathcal{B}}

\def \qed { \mbox{}\hfill
$\Box$\vspace{1ex}}

\providecommand{\Dref}[1]{Definition~\ref{#1}}
\providecommand{\Tref}[1]{Theorem~\ref{#1}}
\providecommand{\Lref}[1]{Lemma~\ref{#1}}
\providecommand{\Pref}[1]{Proposition~\ref{#1}}
\providecommand{\Yref}[1]{Corollary~\ref{#1}}
\providecommand{\Sref}[1]{Section~\ref{#1}}
\providecommand{\Eref}[1]{(\ref{#1})} 

\title{Sections and Chapters}
\let\oldthebibliography\thebibliography
\renewcommand{\thebibliography}[1]{
  \oldthebibliography{#1}
  \setlength{\itemsep}{0pt}
  \setlength{\parsep}{0pt}
  \setlength{\parskip}{0pt}
}

\begin{document}

\tikzset{->-/.style={decoration={
  markings,
  mark=at position #1 with {\arrow{>}}},postaction={decorate}}}
  \tikzset{-<-/.style={decoration={
  markings,
  mark=at position #1 with {\arrow{<}}},postaction={decorate}}}

\author{\sc{Vibhor Bhatt,\,\,Satyajit Guin,\,\,Palash Sharma}}
\title{Extensions of Operator Systems, Hyperrigidity and Quantum Double Suspension}

\date{}
\maketitle

\begingroup
\renewcommand{\thefootnote}{}
\begin{NoHyper}
\footnotetext{%
\textbf{2020 Mathematics Subject Classification.} 46L07; 46L05.

\textbf{Keywords.} $C^*$-algebras; operator systems; Toeplitz type extension; boundary representations; hyperrigidity; quantum double suspension; propagation number.}
\end{NoHyper}
\endgroup

\begin{abstract}
We introduce a class of extensions of unital operator systems satisfying a certain Toeplitz type property and show that these extensions induce corresponding extensions of the $C^*$-algebras they generate. We prove that an extension in this class is hyperrigid in the $C^*$-algebra it generates if and only if both operator systems from which it is constructed are hyperrigid in the $C^*$-algebras they generate. We then introduce the quantum double suspension of operator systems as a special case of this construction and show that its iterates yield the Hong–Szyma\'nski quantum double suspensions at the $C^*$-algebra level, including the Vaksman–Soibelman quantum spheres. In particular, we construct hyperrigid operator systems that generate the $C^*$-algebras of these quantum spheres. As a further application, we show that every counterexample to Arveson’s hyperrigidity conjecture gives rise to a sequence of counterexamples. We identify the \v{S}ilov ideal and the $C^*$-envelope of these extensions and derive a $K$-theoretic condition under which each such sequence contains infinitely many pairwise non-isomorphic counterexamples. In particular, Scherer's counterexample gives rise to infinitely many pairwise non-isomorphic counterexamples. Finally, we compute the propagation number, an invariant of operator systems introduced by Connes and van Suijlekom, for this class of extensions.
\end{abstract}

\hypersetup{linkcolor=blue}

\section{Introduction}

The theory of extensions is a fundamental part of the structure theory of $C^*$-algebras. An extension of a $C^*$-algebra $A$ by a $C^*$-algebra $B$ is a short exact sequence
\begin{center}
$0\longrightarrow B\longrightarrow C\longrightarrow A\longrightarrow 0.$
\end{center}
The study of such extensions provides a framework for understanding $C^*$-algebras in terms of an ideal and the corresponding quotient. The theory has been developed extensively, with important connections to $K$-homology \cite{BDF,HR} and Kasparov's $KK$-theory \cite{Kas}. For an account of the general theory, see, for example, \cite{Bla}.

Among the many classes of $C^*$-algebra extensions, Toeplitz type extensions form a particularly concrete and useful class. The fundamental example is the Toeplitz algebra $\mathcal{T}$, which is an extension of $C(S^1)$ by $\sk(\ell^2(\bbn))$. More generally, one can consider extensions of the form
\[
0\longrightarrow \sk\otimes B\longrightarrow \mathscr{C}\longrightarrow A\longrightarrow 0,
\]
where $A$ and $B$ are unital $C^*$-algebras, subject to additional Toeplitz type conditions. Christensen and Ivan \cite{CI} investigated a special class of such extensions, corresponding to $B=\bbc$, from the geometric and metric perspective of Connes and Rieffel \cite{C1,C,Rie1,Rie2}. Subsequently, Hawkins and Zacharias \cite{HZ}, motivated in part by Kasparov's $KK$-theory and the work of Christensen and Ivan, formally introduced Toeplitz type $C^*$-extensions and characterized them in terms of a factorization of the Busby invariant through the Calkin algebra. They also showed that this class of extensions is sufficiently broad to encompass many examples of interest.

A noteworthy concrete class of Toeplitz type $C^*$-extensions is provided by the quantum double suspension (QDS) introduced by Hong and Szyma\'nski \cite{HS}. They showed that the $C^*$-algebras underlying the Vaksman--Soibelman \cite{VS} odd-dimensional quantum spheres $C(S_q^{2\ell+1}),\,0<q<1$, which are defined as quantum homogeneous spaces of Woronowicz's quantum group $SU_q(\ell+1)$ \cite{W1,W2}, can be obtained by iterating QDS, $\Sigma^2(C(S_q^{2\ell-1}))=C(S_q^{2\ell+1})$, starting with $B=C(S^1)$. In particular, $C(S_q^3)=\Sigma^2(C(S^1))=C(SU_q(2))$ is the $C^*$-algebra of continuous functions on the compact quantum group $SU_q(2)$ discovered by Woronowicz \cite{W}. Similarly, the $C^*$-algebras underlying the even-dimensional quantum spheres $C(S_q^{2\ell})$, which are quotients of the odd-dimensional quantum spheres, are obtained by iterating QDS, starting with $B=\bbc\oplus\bbc$, the $C^*$-algebra of a two-point space. In particular, $C(S_{q,c}^2)=\Sigma^2(\bbc\oplus\bbc)$ is the $C^*$-algebra of continuous functions on the generic Podle\'s spheres $S_{q,c}^2,\,c\in(0,\infty]$ \cite{Pod}. Beyond their significance in $C^*$-algebra theory, QDS also arise in noncommutative geometry \cite{ChSu,ChGu}, in connection with the local index formula of Connes and Moscovici \cite{CM}.

Motivated by the works of Connes and van Suijlekom \cite{CS,CS1}, in which noncommutative geometry and operator systems are connected in a fundamental way through \emph{spectral truncations} \cite{DLM,LS,BGP} and \emph{tolerance relations} \cite{DLL}, as well as by the recent investigation by the first two authors into the metric aspects of Toeplitz type $C^*$-extensions \cite{BGB}, in this article we introduce and investigate Toeplitz type extensions in the category of operator systems. Extensions of operator systems are considerably more subtle than extensions of $C^*$-algebras. Indeed, a surjective unital completely positive (UCP) map $\sigma:E\to F$ between operator systems $E\mbox{ and }F$, with kernel $J:=\ker(\sigma)$, does not automatically imply that $E/J$ is completely order isomorphic to $F$. One must establish that $\sigma$ is a complete quotient map \cite{KPTT1}.

More precisely, given two unital operator systems $E\subseteq\mathcal{B}(H_E)\mbox{ and }F\subseteq\mathcal{B}(H_F)$, and an infinite-dimensional orthogonal projection $P\in\mathcal{B}(H_E)$ satisfying certain Toeplitz type conditions, we construct a unital operator system $X(F;E,P)\subseteq\mathcal{B}(PH_E\otimes H_F)$, which we call a Toeplitz type extension, establishing the following:
\begin{enumerate}[$(i)$]
\item There is a surjective UCP $\sigma:X(F;E,P)\to E$ with $\ker(\sigma)$ of the form $\sk\otimes_{\min}F$ such that $E \cong_{\mbox{\tiny coi}} X(F;E,P) / \ker(\sigma)$, or in other words, $\sigma$ is a complete quotient map;
\item At the $C^*$-algebra level, $C^*(X(F;E,P))$ is a Toeplitz type $C^*$-extension of $C^*(E)$ by the stable ideal $\sk\otimes C^*(F)$, which is an essential ideal.
\end{enumerate}
See \Sref{Sec3} and \Sref{Sec4}. Due to the lack of multiplicative structure in $E$ and $F$, establishing $(i)$ requires an effort, and it is a result rather than a definition. The motivating example of such an extension arises when $F=\bbc\mbox{ and }E=C^*(\bbz)_{(n)}$, where
\begin{IEEEeqnarray}{lCl}\label{Fejer}
C^*(\bbz)_{(n)}:=\{f\in C(S^1):\widehat{f}(k)=0\mbox{ for }|k|\ge n\}
\end{IEEEeqnarray}
is the Fej\'er-Riesz operator system investigated in \cite{CS,S0}, which is dual to the operator system of $n\times n$ Toeplitz matrices \cite{F1}. In this case, we obtain a unital operator system $X\subseteq\mathcal{B}(\ell^2(\bbn))$ whose generated $C^*$-algebra is the Toeplitz algebra $\mathcal{T}$. At the $C^*$-algebra level, this yields the usual Toeplitz short exact sequence.
\smallskip

We now discuss several applications of our construction. In \cite{A2}, Arveson introduced the notion of \emph{hyperrigidity} for operator systems. An operator system $X$ is said to be hyperrigid in the $C^*$-algebra $C^*(X)$ that it generates, if for every nondegenerate representation $\pi:C^*(X)\to\mathcal{B}(H)$, the UCP map $\pi|_X$ has the \emph{unique extension property} (UEP). An irreducible representation with the UEP is called a boundary representation.

Proving hyperrigidity is often subtle, as it requires controlling all $\ast$-representations. Arveson's motivation arose from his earlier work \cite{A}, in which he introduced boundary representations and posed the question of whether every operator system admits sufficiently many boundary representations. He subsequently resolved this problem in the separable case \cite{A1}, while the general case was settled by Davidson and Kennedy \cite{DK}.

We show that a Toeplitz type extension $X(F;E,P)$ is hyperrigid if and only if both the ambient operator systems $E,F$ are hyperrigid (\Tref{first main result of hyperigidity}). This is achieved by decomposing all $\ast$-representations of the corresponding $C^*$-extension in terms of those for its ambient $C^*$-algebras (see \Pref{hyperlemma1}) and investigating all its irreducible representations (see \Pref{hyperrigidity irreducible repn}), together with the unique extension property in \Sref{Sec5.2} (in particular, \Tref{0th main result of hyperrigidity}). This yields a large class of hyperrigid operator systems through iteration: at each step, we replace $F$ by $X(F;E,P)$ and repeat the construction.

We introduce the notion of quantum double suspension of unital operator systems in \Sref{Sec5.3} as a special case of Toeplitz type extensions, taking $E$ to be the Fej\'er--Riesz operator system $C^*(\bbz)_{(n)}$ \Eref{Fejer}, for any fixed $n\ge 2$. By iterating the extension process, we obtain a sequence of hyperrigid operator systems whose generated $C^*$-algebras are precisely the Hong--Szyma\'nski iterated QDS. In particular, taking $F=C^*(\bbz)_{(n)}$ yields a sequence of hyperrigid operator systems whose generated $C^*$-algebras are the Vaksman--Soibelman odd-dimensional quantum spheres $C(S_q^{2\ell+1}),\,0<q<1$. Similarly, taking $F=\bbc\oplus\bbc$, the $C^*$-algebra of a two-point space, yields a sequence of hyperrigid operator systems whose generated $C^*$-algebras are the even-dimensional quantum spheres $C(S_q^{2\ell}),\,0<q<1$ (see \Yref{for fruitful remark}). Our investigation extends the pair $(C^*(\bbz)_{(n)},C(S^1))$, considered in \cite{CS}, to a broad class of important quantum spaces. We believe that this perspective opens up several intriguing directions for future research, such as the $K$-theory of operator systems \cite{S1,S2}.

Arveson also formulated the hyperrigidity conjecture \cite{A2} in an attempt to connect hyperrigidity with the noncommutative Choquet boundary. The conjecture is recently disproved by Bilich and Dor-On \cite{BD} in the noncommutative situation and by Bilich and Scherer \cite{BS} in the commutative situation, see \cite{B,CL,S,T} for more counterexamples. Our construction sheds new light on this conjecture (see Sections \ref{Sec5.4}, \ref{Sec6.2}). In \Sref{Sec5.4}, we show that every counterexample $F\subseteq\mathcal{B}(H_F)$ to the hyperrigidity conjecture, finite- or infinite-dimensional and in the commutative or noncommutative situation, gives rise to a sequence of counterexamples (see \Pref{1111} and \Yref{11111}).

In \Sref{Sec6.1}, we identify the \v{S}ilov ideal and the $C^*$-envelope of general Toeplitz type extensions (see \Pref{Silov ideal claim} and \Tref{Main theorem 1}). In \Sref{Sec6.2}, we revisit hyperrigidity and the hyperrigidity conjecture. We derive a $K$-theoretic condition on $C^*_{env}(F)$ (see \Tref{proposition for sequence of distinct elements}) under which the sequence arising from $F$ in \Pref{1111} contains infinitely many pairwise non-isomorphic counterexamples. In particular, Scherer's counterexample \cite{S} provides an instance of $F$ satisfying our condition and this concretely produces infinitely many pairwise non-isomorphic counterexamples to the hyperrigidity conjecture (see towards the end of \Sref{Sec6.2}).

We also observe that the validity of the conjecture for any function system in the commutative situation \cite{DK1} is linked to that of the conjecture for an infinite family of operator systems, obtained through successive extensions, whose generated $C^*$-algebras are noncommutative. If some member of this ``noncommutative'' family fails the conjecture, so does the commutative function system. Thus we obtain a procedure to generate more counterexamples in the commutative setting. This also illustrates the potential difficulty of obtaining a general characterization of the failure of the conjecture.

Finally, in the same work \cite{CS}, Connes and van Suijlekom also introduced an invariant of operator systems, known as the \emph{propagation number}, which quantifies the extent to which an operator system fails to be a $C^*$-algebra. We conclude the article by computing this invariant in \Sref{Sec7} for Toeplitz type extensions $X(F;E,P)$ (see \Tref{main result2}). It turns out that the class of Toeplitz type extensions contains several subclasses, each with a constant propagation number (see \Yref{main result22} and \ref{main result3}).

\medskip

\noindent\textbf{\textit{Acknowledgement.}} S. Guin thanks Partha Sarathi Chakraborty for clarifying a doubt on quantum spheres.


\section{Preliminaries}\label{Sec2}

In this section, we recall a few preliminary facts that are essential in the context of this article. For general results, we refer to \cite{ER,P,Pi}.

\begin{dfn} 
A unital (concrete) operator system $E$ is a $\ast$-closed unital linear subspace of $\mathcal{B}(H)$ for some Hilbert space $H$. It is called complete if it is additionally closed.
\end{dfn}
The celebrated Choi--Effros theorem shows that any abstract operator system is completely order isomorphic to a concrete operator system \cite{CE}.
\begin{dfn}[\cite{H,CS}]\label{envelope dfn}
A $C^*$-extension of a unital operator system $E$ consists of a pair $(A, \la)$, where $A$ is a unital $C^*$-algebra and $\la : E \rightarrow A$ is a unital complete order injection generating $A$ (i.e., $C^*(\la(E)) = A$).

The $C^*$-envelope of $E$ is a $C^*$-extension $( C^*_{env}(E),i_E)$ equipped with the universal property that for any $C^*$-extension $(A,\la)$, there exists a surjective $*$-homomorphism $\pi : A \longrightarrow C^*_{env}(E)$ such that the following diagram commutes:
\[
\begin{tikzcd}
    & A \arrow[d, "\pi", two heads] \\
    E \arrow[ur, "\la"] \arrow[r, "i_E"'] & C^*_{env}(E)
\end{tikzcd}
\]
\end{dfn}

\begin{dfn}[\cite{H}]
Let $E$  and $E_1$ be a unital operator systems such that there exists a unital complete order isomorphism $\phi: E\to E_1$. Suppose $(A,\la)$ and $(A_1,\la_1)$ are $C^*$-extensions of $E$ and $E_1$, respectively. We say that $(A,\la)$ and $(A_1,\la_1)$ are equivalent, denoted by $(A,\la)\sim (A_1,\la_1)$, if there exists a unital $\ast$-isomorphism $\widehat\phi: A\to A_1$ that makes the following diagram commute:
\[
\begin{tikzcd}
A \arrow[r, "\widehat\phi"] & A_1 \\
E \arrow[u, "\la"] \arrow[r, "\phi"'] & E_1 \arrow[u, "\la_1"']
\end{tikzcd}
\]
\end{dfn}

\begin{dfn}[\cite{A,H}]
For a unital operator system $E$ and a $C^*$-extension $(A,\lambda)$ of $E$, a boundary ideal of $A$ relative to $E$ is a closed two-sided ideal $I\subseteq A$ such that the quotient map $q:A\to A/I$ is a complete isometry on $\lambda(E)\subseteq A$. A boundary ideal of $A$ relative to $E$ is called the \v{S}ilov ideal of $A$ relative to $E$ if it contains every other boundary ideal of $A$ relative to $E$.
\end{dfn}
The existence and uniqueness of the $C^*$-envelope were established in complete generality by Hamana \cite{H}, without addressing Arveson's problem concerning boundary representations. Much later, Arveson revisited his original problem \cite{A1}, which was finally solved by Davidson and Kennedy in full generality \cite{DK}.

Concretely, for any operator system $E \subseteq B(H)$, considering the $C^*$-extension $C^*(E) \subseteq B(H)$ yields $C^*_{env}(E) \cong C^*(E)/\ker(\pi)$, where $\ker(\pi)$ (see \Dref{envelope dfn}) corresponds precisely to Arveson's \v{S}ilov ideal (\cite{A}, \cite[Sec. 4.1]{BL}).
\smallskip

For a unital operator system $E$ and a $C^*$-extension $(A, \lambda)$ of $E$, define the unital operator system
\[
(\lambda(E))^{\circ n} = \overline{\text{span}\{a_1 \dots a_n : a_i \in \lambda(E)\}} \subseteq A
\]
for each $n \in \mathbb{N}$.
\begin{dfn}[\cite{CS}]\label{the reqd dfn}
Let $E$ be a unital operator system, and let $(C^*_{env}(E), i_E)$ be the $C^*$-envelope of $E$. The propagation number of $E$ is defined as 
\[
\operatorname{prop}(E) := \min\{n \in \mathbb{N} : (i_E(E))^{\circ n} = C^*_{env}(E)\}.
\]
If no such $n$ exists, we set $\operatorname{prop}(E) := \infty$.
\end{dfn}

We recall the following two results, which will be used in this article.
\begin{lmma}[\text{\cite[Proposition 2.6]{CS}}]\label{P1}
Let $\varphi : E \to F$ be a linear map between operator systems.
\begin{enumerate}[$(i)$]
\item If $\varphi$ is completely positive, then it is completely bounded with
$\|\varphi\|_{\mathrm{cb}} = \|\varphi\| = \|\varphi(1)\|$.
\item A unital map $\varphi$ is completely positive if and only it is completely contractive.
\item A unital map $\varphi$ is a complete order injection if and only if it is completely isometric.
\end{enumerate}
\end{lmma}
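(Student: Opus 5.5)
The three parts are standard facts about maps between operator systems. I will work concretely, with $E\subseteq\mathcal B(H)$ and $F\subseteq\mathcal B(K)$ unital, and use two earlier ingredients: Arveson's extension theorem, and the characterization of positive elements via the $2\times 2$ trick. Concretely, for $x\in M_n(E)$, $\|x\|\le 1$ if and only if $\begin{pmatrix}1 & x\\ x^* & 1\end{pmatrix}\ge 0$ in $M_{2n}(E)$. Since this criterion only uses the unit and the involution, it holds inside any operator system.

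For $(i)$, first settle the scalar level. If $\varphi$ is positive, then for $x\in E$ with $\|x\|\le 1$ the matrix $\begin{pmatrix}1&x\\x^*&1\end{pmatrix}$ is positive in $M_2(E)$. Applying $\varphi_2$, which is positive because $\varphi$ is $2$-positive, gives $\begin{pmatrix}\varphi(1)&\varphi(x)\\\varphi(x)^*&\varphi(1)\end{pmatrix}\ge0$, and hence $\|\varphi(x)\|\le\|\varphi(1)\|$. The same argument applied to $\varphi_n$ gives $\|\varphi_n(x)\|\le\|\varphi_n(1_n)\|=\|\varphi(1)\|$. This is legitimate because $\varphi_n$ is again (completely) positive on the operator system $M_n(E)$, whose unit is $1_n$. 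Thus $\|\varphi\|_{cb}\le\|\varphi(1)\|\le\|\varphi\|\le\|\varphi\|_{cb}$, and all three quantities coincide.

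For $(ii)$, the forward direction is immediate from $(i)$, since $\|\varphi\|_{cb}=\|1\|=1$. The converse is the main step. Suppose $\varphi$ is unital and completely contractive, and regard $F\subseteq\mathcal B(K)$. I first reduce to the scalar-valued case: it suffices to show that a unital contraction $\psi:E\to\mathbb C$, or more generally $E\to M_n$, is positive. This suffices because complete positivity of $\varphi$ is tested through the compressions $\langle\varphi_n(\cdot)\xi,\xi\rangle$, which are unital and contractive on $M_n(E)$ after normalizing $\|\xi\|=1$.

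To handle a unital contractive functional $\psi$, I take a Hahn--Banach norm-preserving extension to $\mathcal B(H)$, or to $C^*(E)$. A unital functional of norm one on a $C^*$-algebra is a state, and is therefore positive. Its restriction to $E$ is then positive as well. Applying this at each matrix level yields complete positivity. The only delicate point is checking that the self-adjointness needed for positivity is available; this is handled because states on $C^*$-algebras are automatically hermitian.

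For $(iii)$, suppose first that $\varphi$ is a unital complete order injection. Then $\varphi$ and $\varphi^{-1}$, the latter defined on $\varphi(E)$ and unital there, are both UCP. By $(ii)$ both are complete contractions, so $\varphi$ is a complete isometry.

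Conversely, suppose $\varphi$ is a unital complete isometry. By $(ii)$, $\varphi$ is UCP, and so is $\varphi^{-1}:\varphi(E)\to E$, which is unital and completely contractive on the operator system $\varphi(E)$. Here $\varphi(E)$ is $\ast$-closed because unital contractions between operator systems are self-adjoint, by $(ii)$. Hence $\varphi_n(x)\ge0$ if and only if $x\ge0$, so $\varphi$ is a complete order injection.

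I expect the main obstacle to be the converse in $(ii)$, namely the reduction to states via Hahn--Banach together with the automatic hermitian property. The remaining parts are formal consequences of $(i)$ and $(ii)$.
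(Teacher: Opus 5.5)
Your proof is correct: the paper gives no argument of its own for this lemma (it is quoted from Connes--van Suijlekom's Proposition 2.6, which rests on standard results in Paulsen's book), and your route is that standard proof. Namely, you use the $2\times 2$ matrix trick for $(i)$, you reduce $(ii)$ to unital contractive functionals on $M_n(E)$, which extend to states, and you obtain $(iii)$ by applying $(ii)$ to $\varphi$ and to $\varphi^{-1}$ on the operator system $\varphi(E)$. One wording fix: the opening of $(i)$ should say ``completely (or $2$-)positive'' rather than ``positive'', since for a merely positive map on an operator system one only gets $\|\varphi\|\le 2\|\varphi(1)\|$; your argument does correctly use $2n$-positivity at level $n$.
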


\begin{thm}[\text{\cite[Theorem 4.6]{KPTT}}]\label{P2}
Let $\varphi : E \to F$ and $\psi: E'\to F'$ be unital completely positive maps (resp., unital complete order injections) between unital operator systems. Then $\,\varphi\otimes_{\text{min}} \psi: E\otimes_{\text{min}}  E'\to F\otimes_{\text{min}}  F'$ is a unital completely positive map (resp., unital complete order injection).
\end{thm}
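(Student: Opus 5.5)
The plan is to work with concrete realizations, since the minimal tensor product of operator systems is spatial. If $E\subseteq\mathcal{B}(H)$ and $E'\subseteq\mathcal{B}(H')$, then $E\otimes_{\min}E'$ is the operator system $\mathrm{span}\{x\otimes y\}\subseteq\mathcal{B}(H\otimes H')$ with the induced matrix order. I will prove the UCP statement first and then derive the complete order injection statement from it.

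\emph{The UCP case.} Let $F\subseteq\mathcal{B}(K)$ and $F'\subseteq\mathcal{B}(K')$.
\begin{enumerate}[$(1)$]
\item By Arveson's extension theorem, extend $\varphi$ to a UCP map $\tilde\varphi:\mathcal{B}(H)\to\mathcal{B}(K)$, and similarly extend $\psi$ to $\tilde\psi:\mathcal{B}(H')\to\mathcal{B}(K')$.
\item By Stinespring's theorem, write $\tilde\varphi(x)=V^*\pi(x)V$ and $\tilde\psi(y)=W^*\rho(y)W$. Here $\pi,\rho$ are $*$-representations and $V,W$ are isometries, because the maps are unital.
\item Put $\Phi(z)=(V\otimes W)^*(\pi\otimes\rho)(z)(V\otimes W)$ on $\mathcal{B}(H)\otimes_{\min}\mathcal{B}(H')$, using the spatial tensor product representation $\pi\otimes\rho$. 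This $\Phi$ is a compression of a $*$-homomorphism, so it is UCP.
\item On elementary tensors $\Phi(x\otimes y)=\varphi(x)\otimes\psi(y)$. Hence $\Phi$ restricted to $E\otimes_{\min}E'$ equals $\varphi\otimes\psi$, which is therefore UCP.
\item Its range lies in $\mathrm{span}\{F\otimes F'\}$. Since $F\otimes_{\min}F'$ carries the order inherited from $\mathcal{B}(K\otimes K')$, positivity at every matrix level holds there as well.
\end{enumerate}

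\emph{The complete order injection case.} Now let $\varphi,\psi$ be unital complete order injections.
\begin{enumerate}[$(1)$]
\item By \Lref{P1}(iii), $\varphi$ is a complete order isomorphism of $E$ onto the operator system $\varphi(E)\subseteq F$, and the inverse $\varphi^{-1}:\varphi(E)\to E$ is UCP. The same holds for $\psi$.
\item Apply the first part to the pair $\varphi,\psi$, regarded as maps into $\varphi(E)$ and $\psi(E')$, and also to the pair $\varphi^{-1},\psi^{-1}$. The two resulting tensor maps are mutually inverse UCP maps.
\item Hence $\varphi\otimes\psi$ is a complete order isomorphism $E\otimes_{\min}E'\to\varphi(E)\otimes_{\min}\psi(E')$.
\item Since $\varphi(E)\subseteq F\subseteq\mathcal{B}(K)$, both $\varphi(E)\otimes_{\min}\psi(E')$ and $F\otimes_{\min}F'$ sit in $\mathcal{B}(K\otimes K')$ with the induced matrix order. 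So the inclusion $\varphi(E)\otimes_{\min}\psi(E')\hookrightarrow F\otimes_{\min}F'$ is a complete order injection; this is injectivity of $\otimes_{\min}$.
\item Composing the maps in steps (3) and (4) gives the claim.
\end{enumerate}

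The main subtlety I expect is that $\otimes_{\min}$ must not depend on the chosen concrete realizations. The inverse-map argument above handles this: any two realizations of the same operator system are related by unital complete order isomorphisms, and the UCP case then shows the resulting tensor products are completely order isomorphic.
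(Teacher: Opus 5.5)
Your proof is correct. Note that the paper gives no proof of this statement; it imports it from \cite{KPTT}. There, $\otimes_{\min}$ is defined intrinsically: $[p_{ij}]\in M_n(E\otimes E')$ is declared positive when $[(\phi\otimes\phi')(p_{ij})]\ge 0$ for all UCP maps $\phi:E\to M_k$ and $\phi':E'\to M_m$. In that framework functoriality is a one-line observation, since you just precompose the test maps with $\varphi$ and $\psi$. Injectivity follows from Arveson's extension theorem for matrix-valued UCP maps, and agreement with the spatial structure in $\mathcal{B}(H\otimes H')$ is a separate theorem.

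You instead take the spatial picture as the definition. This makes the injection part essentially free, because $\varphi(E)\otimes_{\min}\psi(E')$ and $F\otimes_{\min}F'$ both sit in $\mathcal{B}(K\otimes K')$ with the induced order. The cost is that the work moves into functoriality, which needs three ingredients:
\begin{enumerate}[$(a)$]
\item Arveson extension to $\mathcal{B}(H)$;
\item Stinespring dilation;
\item boundedness of $\pi\otimes\rho$ for the spatial norm.
\end{enumerate}
It also obliges you to check independence of the concrete realization. You correctly deduce this from the UCP case without circularity. Both routes use Arveson's extension theorem essentially, just at different points. The intrinsic route is representation-free and natural for abstract operator systems. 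Yours matches how the paper actually uses $\otimes_{\min}$, e.g.\ $\sk(PH_E)\otimes_{\min}F\subseteq\mathcal{B}(PH_E\otimes H_F)$.

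Two small remarks. First, the appeal to \Lref{P1}$(iii)$ is unnecessary: the inverse of a unital complete order injection is UCP on its range by definition. Second, if $\otimes_{\min}$ is read as the norm closure, as in the proof of \Pref{qqq}, everything extends by continuity, and the range lies in the closure of $\mathrm{span}\{F\otimes F'\}$.
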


Recall that if $\sigma:E\to F$ is a surjective UCP, then the induced map $\widetilde\sigma:E/\ker(\sigma)\to F$, defined by $\widetilde\sigma(e+\ker(\sigma)):=\sigma(e)$, is well-defined, bijective, and UCP \cite{KPTT1}. However, this does not guarantee that $\widetilde\sigma^{-1}$ is also UCP.

\begin{dfn}[\cite{KPTT1}]
Let $\sigma:E\to F$ be a surjective UCP. We call $E$ an operator-system extension of $F$ if $\,\widetilde\sigma:E/\ker(\sigma)\to F$ induced by $\sigma$ is a complete order isomorphism, or in other words, $\sigma$ is a complete quotient map.
\end{dfn}
The following result will be used in \Sref{Sec5.2}.
\begin{thm}[\text{\cite[Theorem 3.18]{P}}]\label{Multiplicative domain theorem}
Let \(\mathcal{A}\) and \(\mathcal{B}\) be unital \(C^*\)-algebras, and let \(\phi: \mathcal{A} \to \mathcal{B}\) be unital completely positive. We have the following:
\begin{enumerate}[$(i)$]
\item \(\{a \in \mathcal{A}: \phi(a)^*\phi(a) = \phi(a^*a)\} = \{a \in \mathcal{A}: \phi(ba) = \phi(b)\phi(a) \text{ for all } b \in \mathcal{A}\}\) is a subalgebra of \(\mathcal{A}\), and \(\phi\) is a homomorphism when restricted to this set.
\item \(\{a \in \mathcal{A}: \phi(a)\phi(a)^* = \phi(aa^*)\} = \{a \in \mathcal{A}: \phi(ab) = \phi(a)\phi(b) \text{ for all } b \in \mathcal{A}\}\) is a subalgebra of \(\mathcal{A}\), and \(\phi\) is a homomorphism when restricted to this set.
\item \(\{a \in \mathcal{A}: \phi(a)^*\phi(a) = \phi(a^*a) \text{ and } \phi(a)\phi(a)^* = \phi(aa^*)\} = \{a \in \mathcal{A}: \phi(ab) = \phi(a)\phi(b) \text{ and } \phi(ba) = \phi(b)\phi(a) \text{ for all } b \in \mathcal{A}\}\) is a \(C^*\)-subalgebra of \(\mathcal{A}\), and \(\phi\) is a \(*\)-homomorphism when restricted to this set.
\end{enumerate}
We denote multiplicative domain of $\phi$ by $$\mathrm{MD}(\phi)= \{a \in \mathcal{A}: \phi(ab) = \phi(a)\phi(b) \text{ and } \phi(ba) = \phi(b)\phi(a) \text{ for all } b \in \mathcal{A}\}.$$
\end{thm}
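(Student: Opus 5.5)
The plan is to work entirely through a Stinespring dilation, since the defect $\phi(a^*a)-\phi(a)^*\phi(a)$ then becomes a positive operator of the form $T^*T$, and equality forces $T=0$. First I would embed $\mathcal{B}$ faithfully in some $\mathcal{B}(K)$, which is harmless because all conditions are algebraic identities in $\mathcal{B}$. Stinespring's theorem then gives a representation $\pi:\mathcal{A}\to\mathcal{B}(H)$ and $V:K\to H$ with $\phi(a)=V^*\pi(a)V$. Since $\phi$ is unital, $V^*V=1$, so $VV^*$ is a projection. The key computation is
\[
\phi(a^*a)-\phi(a)^*\phi(a)=V^*\pi(a)^*(1-VV^*)\pi(a)V=\bigl((1-VV^*)\pi(a)V\bigr)^*\bigl((1-VV^*)\pi(a)V\bigr).
\]
This recovers the Schwarz inequality $\phi(a)^*\phi(a)\le\phi(a^*a)$. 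It also shows that equality holds if and only if $(1-VV^*)\pi(a)V=0$, i.e. $\pi(a)V=VV^*\pi(a)V$.

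\textbf{Part (i).} Suppose $\phi(a)^*\phi(a)=\phi(a^*a)$. For every $b\in\mathcal{A}$,
\[
\phi(ba)=V^*\pi(b)\pi(a)V=V^*\pi(b)VV^*\pi(a)V=\phi(b)\phi(a).
\]
Conversely, if $\phi(ba)=\phi(b)\phi(a)$ for all $b$, take $b=a^*$ and use $\phi(a^*)=\phi(a)^*$ (positive maps are self-adjoint) to get $\phi(a^*a)=\phi(a)^*\phi(a)$. So the two sets coincide; call the common set $M_1$.

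\begin{itemize}
\item $M_1$ is a linear subspace, since the second description is linear in $a$.
\item $M_1$ is norm closed, by continuity of $\phi$ (\Lref{P1}).
\item $M_1$ is closed under products: if $a_1,a_2\in M_1$, then
\[
\phi(ba_1a_2)=\phi(ba_1)\phi(a_2)=\phi(b)\phi(a_1)\phi(a_2)=\phi(b)\phi(a_1a_2),
\]
where the last step is the $b=a_1$ case of $a_2\in M_1$.
\item $\phi|_{M_1}$ is multiplicative: take $b\in M_1$ in the defining identity.
\end{itemize}

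\textbf{Part (ii).} Apply (i) to $a^*$, or rerun the computation with the dilation applied to $aa^*$. This gives $\phi(a)\phi(a)^*=\phi(aa^*)$ iff $\phi(ab)=\phi(a)\phi(b)$ for all $b$, and the set $M_2$ of such $a$ is a closed subalgebra. Taking adjoints shows $M_2=M_1^*$.

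\textbf{Part (iii).} The set in (iii) is $M_1\cap M_2$, so it is a closed subalgebra. It is $*$-closed because $M_2=M_1^*$, hence it is a $C^*$-subalgebra. Since $\phi$ is self-adjoint and multiplicative on it, $\phi$ restricts to a $*$-homomorphism there.

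I expect no serious obstacle. The only substantive input is the Stinespring factorization together with the observation that $V$ is an isometry because $\phi$ is unital. Everything else is bookkeeping with the identity $\pi(a)V=VV^*\pi(a)V$.
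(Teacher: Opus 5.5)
Your proof is correct, but it takes a different route from the one behind the statement. The paper does not prove this theorem; it quotes it from Paulsen's book. The standard proof there, due to Choi, uses the Schwarz inequality rather than a dilation. One applies $\phi_2(x^*x)\ge\phi_2(x)^*\phi_2(x)$ to $x=\begin{pmatrix} a & b\\ 0 & 0\end{pmatrix}$. This gives a positive $2\times 2$ defect matrix whose $(1,1)$ entry $\phi(a^*a)-\phi(a)^*\phi(a)$ vanishes, which forces the off-diagonal entry $\phi(a^*b)-\phi(a)^*\phi(b)$ to vanish; taking adjoints then yields $\phi(ca)=\phi(c)\phi(a)$ for all $c$. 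One can even avoid $\phi_2$: expand the Schwarz inequality for $\phi$ at $a+tb$, $t\in\mathbb{C}$, and let $t\to 0$ along the real and imaginary axes.

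That route needs only the Schwarz inequality for $\phi$. It therefore covers unital $2$-positive maps and needs no Hilbert space representation of $\mathcal{B}$. Yours needs full complete positivity for Stinespring, which costs nothing here since the paper only applies the theorem to UCP maps.

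What your route buys is structure. The defect is exhibited explicitly as $T^*T$ with $T=(1-VV^*)\pi(a)V$. The equality condition $\pi(a)V=VV^*\pi(a)V$ says exactly that $\pi(a)$ leaves the closed subspace $VK$ invariant. Hence $M_1$ is the preimage under $\pi$ of the algebra of operators leaving $VK$ invariant, and $M_1\cap M_2$ is the preimage of the commutant of the projection $VV^*$. This gives the closed-subalgebra and $C^*$-subalgebra assertions at once. On $M_1\cap M_2$, $\phi$ is unitarily the compression of $\pi$ to a reducing subspace, so your bullet-point bookkeeping could be replaced by that single observation.
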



\section{The class of unital Operator Systems}\label{Sec3}

In this section, we introduce the class of operator systems that we will study in this article. All tensor products of operator systems considered in this article are minimal.
\smallskip

\noindent\textbf{\textit{The family $\mathrm{Op}_{C^*}$}:} Consider the family $\mathrm{Op}_{C^*}$ consisting of a tuple $(A,H,P)$, where $A$ is a unital $C^*$-algebra, $\pi:A\to\mathcal{B}(H)$ is a faithful representation on a separable Hilbert space $H$, and $P\in\mathcal{B}(H)$ is an infinite-dimensional projection such that
\begin{itemize}
\item $[P,\pi(a)]\in \sk(H)$ for all $a\in A$;
\item If $a\in A$ is such that $P\pi(a)P\in\sk(PH)$, then $a=0$.
\end{itemize}
Together, these two conditions imply that $A$ has a faithful representation in the Calkin algebra $\mathcal{Q}(PH)$. Note that the second condition is equivalent to saying $P\pi(A)P\cap\sk(PH)=\{0\}$ and the map $a\mapsto P\pi(a)P$ is injective.
\begin{dfn}
An element $(A,H,P)\in\mathrm{Op}_{C^*}$ is called degenerate when $P=\mathrm{id}$, and otherwise it is called non-degenerate.
\end{dfn}

Notice that the degenerate case simply demands $\pi(A)\cap\mathcal{K}(H)=\{0\}$.

\begin{xmpl}\rm
\begin{enumerate}[$(i)$]
\item Consider $\pi:C(S^1)\to\mathcal{B}(L^2(S^1))$ the standard representation by multiplication of functions and the Hardy projection $P:L^2(S^1)\cong\ell^2(\mathbb{Z})\to\ell^2(\mathbb{N})$. The tuple $(C(S^1),L^2(S^1),P)$ is a non-degenerate element in $\mathrm{Op}_{C^*}$.
\item Let $A$ be a unital $C^*$-algebra with a faithful representation $\pi_1:A\to\mathcal{B}(H_1)$ on a separable Hilbert space $H_1$ such that $\pi_1(A)\cap\mathcal{K}(H_1)=\{0\}$. Let $\pi_2:A\to\mathcal{B}(H_2)$ be any unital representation on a separable Hilbert space $H_2$ and consider $\pi:=\pi_1\oplus\pi_2:A\to\mathcal{B}(H_1\oplus H_2)$. Then $(A,H_1\oplus H_2,P)\in\mathrm{Op}_{C^*}$, where $P=1_{H_1}\oplus\,0$. This scheme produces many non-degenerate elements in $\mathrm{Op}_{C^*}$ from degenerate ones.
\item Let $A$ be a simple unital infinite-dimensional $C^*$-algebra with a faithful representation $\pi:A\rightarrow\mathcal{B}(H)$ on a separable Hilbert space $H$. Then $\pi(A)\cap\mathcal{K}(H)=\{0\}$, which gives a degenerate element $(A,H,\mathrm{id})\in\mathrm{Op}_{C^*}$. To further get a non-degenerate element in $\mathrm{Op}_{C^*}$, apply the construction described in $(ii)$.
\end{enumerate}
\end{xmpl}
It is clear that the family $\mathrm{Op}_{C^*}$ is large.

\begin{rmrk}
$\mathrm{Op}_{C^*}$ can be made into a category. A morphism $(\phi,V):(A,H_A,P_A)\longrightarrow(B,H_B,P_B)$
consists of a unital $\ast$-homomorphism $\phi:A\rightarrow B$ and a bounded linear map $V:H_A\rightarrow H_B$ such that
\begin{enumerate}[$(i)$]
\item $V\pi_A(a)-\pi_B(\phi(a))V\in\mathcal{K}(H_A,H_B),\,\, \forall\,a\in A$;
\item $VP_A-P_BV\in\mathcal{K}(H_A,H_B)$.
\end{enumerate}
The composition is defined by $(\psi,W)\circ(\phi,V):=(\psi\circ\phi\,,\,WV)$.
\end{rmrk}

\noindent\textbf{\textit{The family $\mathrm{Op}_{\mathrm{OS}}$}:} Let $E\subseteq\mathcal{B}(H)$ be a unital operator system with $H$ a separable Hilbert space. Consider the $C^*$-algebra $C^*(E)\subseteq\mathcal{B}(H)$ generated by $E$. Suppose there exists an infinite-dimensional projection $P\in\mathcal{B}(H)$ such that
\begin{itemize}
\item $[P,e]\in \sk(H)$ for all $e\in E$;
\item If $c\in C^*(E)$ is such that $PcP\in\sk(PH)$, then $c=0$.
\end{itemize}
Consider the family $\mathrm{Op}_{\mathrm{OS}}$ consisting of such a tuple $(E,H,P)$. Like in the case of $\mathrm{Op}_{C^*}$, we call an element $(E,H,P)\in\mathrm{Op}_{\mathrm{OS}}$ degenerate when $P=\mbox{id}$, and otherwise non-degenerate. Observe that $\mathrm{Op}_{C^*}\subseteq\mathrm{Op}_{\mathrm{OS}}$.

\begin{xmpl}\label{crucial example}\rm
\begin{enumerate}[$(i)$]
\item Consider the Fej\'er-Riesz operator system $E=C^*(\bbz)_{(n)}:=\{f\in C(S^1):\widehat{f}(k)=0\mbox{ for }|k|\ge n\}$ studied in \cite{CS,S0}, which is dual to the operator system of $n\times n$ Toeplitz matrices. We have $E\subseteq\mathcal{B}(\ell^2(\bbz))$. Take $P:L^2(S^1)\cong\ell^2(\bbz)\to\ell^2(\bbn)$, the Hardy projection. Then $(E,\ell^2(\bbz),P)$ is a non-degenerate element in $\mathrm{Op}_{\mathrm{OS}}$. Note that $C^*(C^*(\bbz)_{(n)})=C(S^1)$ and $PC(S^1)P\,\cap\,\sk(\ell^2(\bbn))=\{0\}$.
\item Let $\,\theta\,$ be an irrational number and consider the finite-dimensional unital operator system $\mathcal{I}_\theta:=\mathrm{sp}\{1,u,v,u^*,v^*\}\subseteq\mathcal{B}(\ell^2(\bbz^2))$, where $u,v$ are any two unitaries acting on $\ell^2(\bbz^2)$ satisfying $uv=e^{2\pi i\theta}vu$. The $C^*$-algebra it generates is the noncommutative torus $\mathcal{A}_\theta$, which is simple. Then $(\mathcal{I}_\theta,\ell^2(\bbz^2),\mathrm{id})$ is a degenerate element in $\mathrm{Op}_{\mathrm{OS}}$.
\end{enumerate}
\end{xmpl}
The above examples show that $\mathrm{Op}_{C^*}\subsetneq\mathrm{Op}_{\mathrm{OS}}$.
\begin{rmrk}
$\mathrm{Op}_{\mathrm{OS}}$ can also be made into a category. A morphism $
(\phi,V):(E,H_E,P_E)\longrightarrow(F,H_F,P_F)$ consists of a UCP $\phi:E\to F$ and a bounded linear map $V:H_E\rightarrow H_F$ such that
\begin{enumerate}[$(i)$]
\item $Ve-\phi(e)V\in\mathcal{K}(H_E,H_F),\,\,\forall\,e\in E$;
\item $VP_E-P_FV\in\mathcal{K}(H_E,H_F)$.
\end{enumerate}
The composition is defined by $(\psi,W)\circ(\phi,V):=(\psi\circ\phi\,,\,WV)$. Then, $\mathrm{Op}_{C^*}$ is a subcategory of $\mathrm{Op}_{\mathrm{OS}}$.
\end{rmrk}

Consider the canonical map
\[
\Phi:\mathrm{Op}_{\mathrm{OS}}\longrightarrow\mathrm{Op}_{C^*}\,,\qquad (E,H_E,P)\longmapsto (C^*(E),H_E,P).
\]
For the well-definedness, we only need to check that $[P,e]\in\sk(H_E)$ for all $e\in E$ implies that $[P,x]\in \sk(H_E)$ for all $x\in C^*(E)$. For this, consider
\[
\mathscr{A}:=\{T\in\mathcal{B}(H_E):[P,T]\in\mathcal{K}(H_E)\},
\]
and observe that $\mathscr{A}$ is a unital $C^*$-subalgebra of $\mathcal{B}(H_E)$ containing $E$. Consequently, $C^*(E)\subseteq\mathscr{A}$. Since $C^*(C^*(E))=C^*(E)$ for any unital operator system $E\subseteq\mathcal{B}(H)$, we see that the map $\Phi$ is idempotent.
\smallskip

\noindent\textbf{\textit{Our construction}:} Let $\mathrm{OS}_1$ denote the category of unital operator systems (without any restriction on dimensionality) with UCP maps as morphisms. Define the map
\[
\Psi:\mathrm{OS}_1\times\mathrm{Op}_{\mathrm{OS}}\longrightarrow\mathrm{OS}_1\,,\qquad (F,(E,H_E,P))\longmapsto X(F;E,P),
\]
where
\begin{IEEEeqnarray}{lCl}\label{our construction}
X(F;E,P):= \sk(PH_E)\otimes_{\min} F + PEP\otimes \mathbb{C}\cdot 1_{H_{F}}.
\end{IEEEeqnarray}
Then $X(F;E,P)$ is a unital operator system in $\mathcal{B}(PH_E\otimes H_F)$ with the unit $P\otimes 1_{H_F}$.

If we restrict the class of morphisms $(\phi,V)$ between objects $(E,H_E,P)$ and $(E^\prime, H_{E^\prime},P^\prime)$ in $\mathrm{Op}_{\mathrm{OS}}$ by demanding that $V:H_E\to H_{E^\prime}$ be unitary, then $\Psi$ becomes a functor.

\begin{lmma}\label{lemma used in helpful lemma 1}
We have $(\sk(PH_E)\otimes_{\min} F)\cap(PEP\otimes \mathbb{C}\cdot 1_{H_F})=\{0\}$ for any $E,F,\mbox{ and }P$.
\end{lmma}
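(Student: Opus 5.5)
Suppose $T$ lies in the intersection, so $T = PeP\otimes 1_{H_F}$ for some $e\in E$ and also $T\in \sk(PH_E)\otimes_{\min}F$. The aim is to show that $PeP\in\sk(PH_E)$. The second defining condition of $\mathrm{Op}_{\mathrm{OS}}$ will then give $e=0$, since $e\in E\subseteq C^*(E)$, and so $T=0$.

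To get compactness of $PeP$, I will slice $T$ by a state on the second leg. Fix a unit vector $\xi\in H_F$ and let $\omega_\xi$ be the vector state on $\mathcal{B}(H_F)$. The corresponding slice map is
\[
\mathrm{id}\otimes\omega_\xi:\mathcal{B}(PH_E\otimes H_F)\to\mathcal{B}(PH_E),\qquad X\mapsto V_\xi^*XV_\xi ,
\]
where $V_\xi:\eta\mapsto\eta\otimes\xi$ is an isometry. This map is completely positive and norm continuous. On elementary tensors it sends $k\otimes f$ to $\langle f\xi,\xi\rangle k$, which is in $\sk(PH_E)$.

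The space $\sk(PH_E)\otimes_{\min}F$ is the norm closure in $\mathcal{B}(PH_E\otimes H_F)$ of the algebraic tensor product. If $F$ is not assumed closed, I interpret the minimal tensor as the closure of $\sk(PH_E)\odot F$ inside $\sk(PH_E)\otimes_{\min}\overline{F}$. By linearity, every finite sum of elementary tensors slices into $\sk(PH_E)$. Since the slice map is continuous and $\sk(PH_E)$ is norm closed, every element of $\sk(PH_E)\otimes_{\min}F$ slices into $\sk(PH_E)$ as well.

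Applying this to $T$ gives $(\mathrm{id}\otimes\omega_\xi)(T)=V_\xi^*(PeP\otimes 1)V_\xi=PeP$. Hence $PeP\in\sk(PH_E)$, and the $\mathrm{Op}_{\mathrm{OS}}$ condition finishes the argument.

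The only delicate point is making sure the slice argument is valid for the completed minimal tensor product, that is, the continuity and closure step above. Realizing the slice concretely as compression by the isometry $V_\xi$ handles this without appealing to any abstract tensor product theory.
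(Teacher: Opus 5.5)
Your proof is correct and follows the paper's argument: slice by a state on the $F$-leg, which sends $\sk(PH_E)\otimes_{\min}F$ into $\sk(PH_E)$ and $PeP\otimes 1_{H_F}$ to $PeP$, and then apply the second $\mathrm{Op}_{\mathrm{OS}}$ condition. The only difference is cosmetic: you use a vector state realized as compression by the isometry $V_\xi$, whereas the paper uses an arbitrary state $w$ on $F$ and the slice map $\mathrm{id}_{\mathcal{B}(PH_E)}\otimes_{\min} w$.
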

\begin{proof}
Let $w\in S(F)$ be a state on $F$ and write $k=PeP\otimes 1$ with $k\in \sk(PH_E)\otimes_{\min} F$ and $e\in E$. Consider the UCP $S_w:=\mathrm{id}_{\mathcal{B}(PH_E)}\otimes_{\text{min}} w$ (and hence complete contraction by Lemma \ref{P1}), and use the fact that $PeP=S_w(k)\in \sk(PH_E)$ implies $e=0$.
\end{proof}

\begin{lmma}\label{lemma used in helpful lemma 2}
\begin{enumerate}[$(i)$]
\item The map $\varphi:E \to X(F;E,P)$, defined by $\varphi(e)=PeP\otimes 1_{H_F}$, is an injective unital completely positive map.
\item The map $\sigma:X(F;E,P)\to E$ given by $k+PeP\otimes1_{H_F}\mapsto e$ is well-defined and surjective.
\end{enumerate}
\end{lmma}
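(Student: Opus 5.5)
For part $(i)$, I will factor $\varphi$ as a composition of maps already known to be completely positive. The compression $e\mapsto PeP$, viewed as a map $E\to\mathcal{B}(PH_E)$, is completely positive because it is of the form $V^*eV$, where $V:PH_E\to H_E$ is the inclusion isometry. It is unital, since $P\cdot 1\cdot P=P$ is the unit of $\mathcal{B}(PH_E)$. Next comes the map $T\mapsto T\otimes 1_{H_F}$ from $\mathcal{B}(PH_E)$ into $\mathcal{B}(PH_E\otimes H_F)$. This is a unital $*$-homomorphism, hence UCP; alternatively it is $\mathrm{id}\otimes_{\min}\iota$ with $\iota:\bbc\to F$ the unital inclusion, and \Tref{P2} applies. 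The composite $\varphi$ is therefore UCP, and its image lies in the summand $PEP\otimes\bbc\cdot1_{H_F}$ of $X(F;E,P)$.

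For injectivity, suppose $PeP\otimes 1_{H_F}=0$. Then $PeP=0$, which is in particular compact. I will not assume $e\in C^*(E)$ is special here: since $E\subseteq C^*(E)$, the second defining condition of $\mathrm{Op}_{\mathrm{OS}}$ applied to $c=e$ already gives $e=0$.

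For part $(ii)$, \Lref{lemma used in helpful lemma 1} says the sum in \Eref{our construction} is direct. Hence every $x\in X(F;E,P)$ decomposes uniquely as $x=k+y$ with $k\in\sk(PH_E)\otimes_{\min}F$ and $y\in PEP\otimes\bbc\cdot1_{H_F}$. By part $(i)$, $y=\varphi(e)$ for a unique $e\in E$. So $\sigma(x):=e$ is well defined, and it is evidently linear. Surjectivity is immediate from $\sigma(\varphi(e))=e$, taking $k=0$.

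The only point needing care is the uniqueness of $e$. I expect no real obstacle: it rests entirely on the injectivity established in $(i)$ together with the direct-sum decomposition from \Lref{lemma used in helpful lemma 1}.
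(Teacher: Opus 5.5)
Your proposal is correct and follows the same route as the paper, whose entire proof is that the lemma ``easily follows from the definition of $\mathrm{Op}_{\mathrm{OS}}$''. You supply the omitted details: $\varphi$ is the compression $e\mapsto V^*eV$ followed by the unital $*$-homomorphism $T\mapsto T\otimes 1_{H_F}$; injectivity comes from the second defining condition applied to $c=e\in E\subseteq C^*(E)$; and $\sigma$ is well defined by \Lref{lemma used in helpful lemma 1} together with the injectivity of $\varphi$.
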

\begin{proof}
This easily follows from the definition of $\mathrm{Op}_{\mathrm{OS}}$.
\end{proof}

\begin{ppsn}\label{qqq}
We have $X(F;E,P)$ is closed in $\mathcal{B}(PH_E\otimes H_F)$ whenever $E$ is closed, irrespective of whether $F\subseteq\mathcal{B}(H_F)$ is closed or not.
\end{ppsn}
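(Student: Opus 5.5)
The plan is to show directly that the sum of the two subspaces in \Eref{our construction} is norm-closed. The key point is that the ``Toeplitz part'' $PEP\otimes 1_{H_F}$ is controlled isometrically through the Calkin algebra, while the ``compact part'' is killed there. Throughout, I read $\sk(PH_E)\otimes_{\min}F$ as the spatial tensor product realized as a norm-closed subspace of $\mathcal{B}(PH_E\otimes H_F)$. Concretely, it is the closure of the algebraic tensor product, which coincides with $\sk(PH_E)\otimes_{\min}\overline{F}$. This is why closedness of $F$ plays no role. The only completeness input needed is then on the $E$-side.

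\textbf{Step 1 (an isometry).} Let $q:\mathcal{B}(PH_E)\to\mathcal{Q}(PH_E)$ be the Calkin quotient map. Since $[P,c]\in\sk(H_E)$ for all $c\in C^*(E)$ (the well-definedness of $\Phi$ above), the map $c\mapsto q(PcP)$ is a $\ast$-homomorphism on $C^*(E)$. Indeed, $PcdP-PcPdP=Pc[P,d]P$ up to sign, and this is compact. By the second defining condition of $\mathrm{Op}_{\mathrm{OS}}$ it is injective. An injective $\ast$-homomorphism between $C^*$-algebras is isometric, so
\[
\|e\|=\|q(PeP)\|\qquad\text{for all } e\in E .
\]

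\textbf{Step 2 (splitting a convergent sequence).} Let $x_n=k_n+Pe_nP\otimes 1_{H_F}\to x$, with $k_n\in\sk(PH_E)\otimes_{\min}F$ and $e_n\in E$. Fix a state $w$ on $F$, extended to $\overline F$, and use the UCP slice map $S_w=\mathrm{id}\otimes_{\min} w$ as in the proof of \Lref{lemma used in helpful lemma 1}. It is contractive by \Lref{P1} and \Tref{P2}, maps $\sk(PH_E)\otimes_{\min}F$ into $\sk(PH_E)$, and sends $PeP\otimes 1$ to $PeP$. Hence $S_w(x_n)=S_w(k_n)+Pe_nP$ is Cauchy. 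Applying $q$ shows that $q(Pe_nP)$ is Cauchy. By Step 1, $(e_n)$ is then Cauchy in $E$, so $e_n\to e\in E$ because $E$ is closed. Consequently $Pe_nP\otimes1\to PeP\otimes 1$. It follows that $k_n=x_n-Pe_nP\otimes 1$ converges, and its limit lies in the closed space $\sk(PH_E)\otimes_{\min}F$. Therefore $x\in X(F;E,P)$.

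\textbf{Main obstacle.} The step requiring care is Step 1. One must upgrade mere injectivity of $e\mapsto q(PeP)$ to a norm equality. This is done by extending the map to $C^*(E)$, where multiplicativity modulo compacts holds, rather than working on $E$ alone. The other point to fix is the interpretation of $\otimes_{\min}$ as the closed spatial tensor product. Without it, the compact summand itself need not be closed when $F$ is not, and the statement would be false as literally written.
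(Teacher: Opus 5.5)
Your proof is correct and follows essentially the paper's argument. An injective $\ast$-homomorphism of $C^*(E)$ modulo compacts is isometric, which gives $\|e_n-e_m\|\le\|x_n-x_m\|$, and closedness of $E$ and of $\sk(PH_E)\otimes_{\min}F$ then finishes the job. The only difference is cosmetic: the paper maps $C^*(E)$ directly into $(\mathcal{B}(PH_E)\otimes_{\min}C^*(F))/(\sk(PH_E)\otimes C^*(F))$, whereas you first apply the slice map $S_w$ and then pass to the ordinary Calkin algebra $\mathcal{Q}(PH_E)$, so injectivity follows immediately from the $\mathrm{Op}_{\mathrm{OS}}$ condition.
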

\begin{proof}
We have seen that $[P,e]\in\sk(H_E)$ for all $e\in E$ implies that $[P,e]\in\sk(H_E)$ for all $e\in C^*(E)$. Consider the map $$\tau: C^*(E)\to\big(\mathcal{B}(PH_E)\otimes_{\min} C^*(F)\big)/\big(\sk(PH_E)\otimes C^*(F)\big)$$ defined by $\tau(c)=PcP\otimes 1_{H_{F}}+ \sk(PH_E)\otimes C^*(F)$. Then, $\tau$ is an injective $*$-homomorphism of $C^*$-algebras. Therefore, $\tau$ is an isometry, and hence $$\norm{PcP\otimes 1_{H_F}+\sk(PH_E)\otimes C^*(F)}_{\mbox{\tiny Quotient}}=\norm{c}_{C^*(E)}.$$ This implies that $$\norm{k+PcP\otimes 1_{H_F}}_{\mathcal{B}(PH_E)\otimes_{\text{min}} C^*(F)}\geq \norm{c}_{C^*(E)},$$ for all $k\in \sk(PH_E)\otimes C^*(F)$ and $c\in C^*(E)$. Now let $(z_n)_{n\in\mathbb{N}}$ be a sequence with $z_n=k_n+Pc_nP\otimes 1_{H_F}\in X(F;E,P)$ such that $z_n\to z\in \mathcal{B}(PH_E)\otimes_{\text{min}} C^*(F)$. Then,
\begin{eqnarray*}
\norm{c_n-c_m}_{C^*(E)} &\leq& \norm{k_n-k_m+P(c_n-c_m)P\otimes 1_{H_F}}_{\mathcal{B}(PH_E)\otimes_{\text{min}} C^*(F)}\\
&=& \norm{z_n-z_m}_{\mathcal{B}(PH_E)\otimes_{\text{min}} C^*(F)}.
\end{eqnarray*}
This shows that $\{c_n\}_{n\in \mathbb{N}}$ is a Cauchy sequence in $E$. Since $E$ is assumed to be closed, this implies that $c_n\to c$ for some $c\in E$. By Lemma \ref{lemma used in helpful lemma 2}, since $\varphi: E\to  X(F;E,P)$ is continuous, we have $\varphi(c_n)= Pc_nP\otimes 1_{H_F}\to PcP\otimes 1_{H_F}=\varphi(c)\in X(F;E,P)$. Therefore, $k_n=z_n-Pc_nP\otimes 1_{H_F}\to z-PcP\otimes 1_{H_F}\in \mathcal{B}(PH_E)\otimes_{\text{min}} C^*(F)$. Since $k_n\in \sk(PH_E)\otimes_{\min} F$, we get $k:=z-PcP\otimes 1_{H_F}\in \sk(PH_E)\otimes_{\min} F$ by the closedness of $\sk(PH_E)\otimes_{\min} F$ (irrespective of whether $F$ is closed or not). This shows that $z=k+PcP\otimes 1_{H_F}\in X(F;E,P)$, which completes the proof.
\end{proof}

\begin{ppsn}\label{1}
Let $E$ be a unital $C^*$-algebra with unital faithful representation $\pi_E:E\to\mathcal{B}(H_E)$ such that $\pi_E(E)\,\cap\,\sk(H_E)=\{0\}$. In other words, $(\pi_E(E),H_E,\mathrm{id})\in\mathrm{Op}_{\mathrm{C^*}}$ is a degenerate element. Let $F\subseteq\mathcal{B}(H_F)$ be a unital operator system. Then the map $\sigma:X(F;\pi_E(E),\mathrm{id})\to E$, given by $k+\pi_E(e)\otimes1_{H_F}\mapsto e$, is UCP.
\end{ppsn}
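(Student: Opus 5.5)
We will realize $\sigma$ as the restriction of a $\ast$-homomorphism between $C^*$-algebras, since $\ast$-homomorphisms are automatically completely positive. Set $A=\mathcal{B}(H_E)\otimes_{\min}C^*(F)$ and let $J=\sk(H_E)\otimes C^*(F)$, a closed two-sided ideal of $A$. Let $q:A\to A/J$ be the quotient map. By the argument in the proof of \Pref{qqq} (with $P=\mathrm{id}$), the map
\[
\tau: E\to A/J,\qquad \tau(e)=\pi_E(e)\otimes 1_{H_F}+J,
\]
is a unital $\ast$-homomorphism. It is injective because $\pi_E(E)\cap\sk(H_E)=\{0\}$: if $\pi_E(e)\otimes 1\in J$, apply $\mathrm{id}\otimes w$ for a state $w$ on $C^*(F)$, as in \Lref{lemma used in helpful lemma 1}, to get $\pi_E(e)\in\sk(H_E)$, hence $e=0$. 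An injective $\ast$-homomorphism between $C^*$-algebras has closed range $\tau(E)$, a $C^*$-subalgebra of $A/J$, and $\tau^{-1}:\tau(E)\to E$ is a $\ast$-isomorphism.

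Next we observe that $X:=X(F;\pi_E(E),\mathrm{id})\subseteq \sk(H_E)\otimes_{\min}F+\pi_E(E)\otimes 1$ sits inside $q^{-1}(\tau(E))$. For $x=k+\pi_E(e)\otimes 1$ with $k\in\sk(H_E)\otimes_{\min}F\subseteq J$, we have $q(x)=\tau(e)$. Hence
\[
\sigma=\tau^{-1}\circ q|_X .
\]
This also recovers well-definedness independently of \Lref{lemma used in helpful lemma 2}.

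Complete positivity then follows. Here $q|_X$ is the restriction of a $\ast$-homomorphism to an operator subsystem, so it is completely positive: for a positive matrix $[x_{ij}]\in M_n(X)\subseteq M_n(A)$, the matrix $[q(x_{ij})]$ is positive in $M_n(A/J)$ and lies in $M_n(\tau(E))$. Positivity in $M_n(\tau(E))$ agrees with positivity in $M_n(A/J)$, since $C^*$-subalgebras inherit positivity (spectral permanence). Applying the $\ast$-isomorphism $\tau^{-1}\otimes\mathrm{id}_n$ then gives $[\sigma(x_{ij})]\ge 0$ in $M_n(E)$. Unitality holds because $\sigma(1_{H_E}\otimes 1_{H_F})=1$.

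The only point needing care is the injectivity of $\tau$, together with the fact that its image is a $C^*$-subalgebra so that $\tau^{-1}$ is a $\ast$-homomorphism. Both are standard and are exactly where the hypothesis $\pi_E(E)\cap\sk(H_E)=\{0\}$ and the assumption that $E$ is a $C^*$-algebra are used. When $E$ is only an operator system, $\tau^{-1}$ need not be completely positive, which is why this proposition is stated only for the $C^*$-algebra case.
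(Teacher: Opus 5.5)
Your proof is correct. Its key input is the same as the paper's: because $\pi_E(E)\cap\sk(H_E)=\{0\}$, a Calkin-type quotient is injective on $E$ and can be undone by a $\ast$-isomorphism on its range. The two routes differ in where the quotient is taken.

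The paper applies $q:\mathcal{B}(H_E)\to\mathcal{Q}(H_E)$ in the first tensor leg. It uses \Tref{P2} to know that $q\otimes_{\min}\mathrm{id}_F:\mathcal{B}(H_E)\otimes_{\min}F\to\mathcal{Q}(H_E)\otimes_{\min}F$ is UCP, and then composes with $q^{-1}|_{q(\pi_E(E))}\otimes\mathrm{id}$. There, injectivity is literally the hypothesis. However, the paper implicitly needs that $q\otimes\mathrm{id}$ annihilates $\sk(H_E)\otimes_{\min}F$, and that $q(\pi_E(e))\otimes 1\mapsto e$ is UCP on $q(\pi_E(E))\otimes 1$.

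You instead quotient the ambient $C^*$-algebra $\mathcal{B}(H_E)\otimes_{\min}C^*(F)$ by the ideal $\sk(H_E)\otimes C^*(F)$. So your first map is already a $\ast$-homomorphism, and no operator-system tensor functoriality is needed, only spectral permanence. The price is the slice-map argument for injectivity of $\tau$, which you supply.

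Your version also generalizes verbatim. Define $\tau$ on $C^*(E)$ by $c\mapsto PcP\otimes 1_{H_F}+J$, which is exactly the map in the proof of \Pref{qqq}. This shows that $\sigma$ is UCP for every $(E,H_E,P)\in\mathrm{Op}_{\mathrm{OS}}$. That is the content of \Yref{2}, which the paper obtains later by a separate argument.

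For the same reason, your closing remark is misleading. The second condition defining $\mathrm{Op}_{\mathrm{OS}}$ is imposed on $C^*(E)$, not on $E$. Under it, your argument works for operator systems $E$: restrict the $\ast$-isomorphism $\tau^{-1}$ from $\tau(C^*(E))$ to $\tau(E)$. The failure of complete positivity you have in mind arises only under the weaker hypothesis $E\cap\sk(H_E)=\{0\}$.
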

\begin{proof}
Denote $X:=X(F;\pi_E(E),\mathrm{id})$. Consider the quotient homomorphism $q:\mathcal{B}(H_E)\longrightarrow\mathcal{Q}(H_E)=\mathcal{B}(H_E)/\sk(H_E)$. Since $\pi_E(E)\cap\sk(H_E)=\{0\}$, we have that $q\big|_{\pi_E(E)}$ is an injective unital $\ast$-homorphism, and hence a UCP. Therefore,
\[
q\otimes_{\min}\mathrm{id}:\mathcal{B}(H_E)\otimes_{\min}F\longrightarrow\mathcal{Q}(H_E)\otimes_{\min}F
\]
is a UCP by \Tref{P2}. Since $q^{-1}|_{q(\pi_E(E))}$ is also a UCP with domain the $C^*$-algebra $q(\pi_E(E))$, and $E\cong E\otimes\bbc$ as $C^*$-algebras, we have $\sigma=(q^{-1}|_{q(\pi_E(E))}\otimes_{\min}\mbox{id})\circ(q\otimes_{\min}\mathrm{id})|_X$is a UCP.
\end{proof}

In the next section, we will be able to extend the above result to general elements of $\mathrm{Op}_{\mathrm{OS}}$.


\section{Toeplitz type Extension of Operator Systems}\label{Sec4}

Let $F\subseteq\mathcal{B}(H_F)$ be a unital operator system and consider $(E,H_E,P)$ in $\mathrm{Op}_{\mathrm{OS}}$. We have the unital operator system $X:=X(F;E,P)$ defined in \Eref{our construction}. In this section, we compute the $C^*$-algebra generated by $X$ in $\mathcal{B}(PH_E\otimes H_F)$, and extend \Pref{1} to $\mathrm{Op}_{\mathrm{OS}}$ to define Toeplitz type extension of operator systems. We denote by $(C^*(F),i)$ a $C^*$-extension of $F$.

\begin{lmma}\label{C*(X) contains that}
For all $n\in \mathbb{N}$, we have $\sk(PH_E)\otimes_{\min} F^{on}\subseteq X^{on}$ and $PE^{on}P\otimes 1_{H_F}\subseteq X^{on}$.
\end{lmma}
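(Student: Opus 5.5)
Both inclusions come from writing products of elements of $X$ in a suitable form. We use the definition $X^{\circ n}=\overline{\mathrm{span}}\{x_1\cdots x_n: x_i\in X\}$ and the fact that $X$ contains $P\otimes 1_{H_F}$, which is the unit of $\mathcal{B}(PH_E\otimes H_F)$. Because $X^{\circ n}$ is a closed subspace, it is enough to show that a spanning set of each left-hand side lies in $X^{\circ n}$.

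\emph{First inclusion.} $F^{\circ n}$ is the closed span of words $f_1\cdots f_n$ with $f_i\in F$. The algebraic tensor product $\sk(PH_E)\odot\mathrm{span}\{f_1\cdots f_n\}$ is dense in $\sk(PH_E)\otimes_{\min}F^{\circ n}$, because the minimal norm is a cross norm. So it suffices to treat elementary tensors $k\otimes f_1\cdots f_n$. Finite-rank operators are dense in $\sk(PH_E)$, so we may take $k$ of finite rank. Every finite-rank operator is a finite sum of products $k_1k_2\cdots k_n$ of finite-rank operators; for instance, a rank-one $\theta_{\xi,\eta}$ equals $\theta_{\xi,\zeta}\,\theta_{\zeta,\zeta}\cdots\theta_{\zeta,\eta}$ for a unit vector $\zeta$. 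Hence
\[
k_1k_2\cdots k_n\otimes f_1f_2\cdots f_n=(k_1\otimes f_1)(k_2\otimes f_2)\cdots(k_n\otimes f_n),
\]
and each factor $k_i\otimes f_i$ lies in $\sk(PH_E)\otimes_{\min}F\subseteq X$. Taking linear combinations and closures gives $\sk(PH_E)\otimes_{\min}F^{\circ n}\subseteq X^{\circ n}$.

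\emph{Second inclusion.} The issue is that $Pe_1\cdots e_nP$ is not literally a product of the compressions $Pe_iP$. Fix $e_1,\dots,e_n\in E$. The hypothesis $[P,e]\in\sk(H_E)$ gives $Pe_iP\,Pe_{i+1}P=Pe_ie_{i+1}P-Pe_i(1-P)e_{i+1}P$, and $Pe_i(1-P)=[P,e_i](1-P)$ is compact. Inducting on $n$, we get
\[
Pe_1\cdots e_nP=(Pe_1P)(Pe_2P)\cdots(Pe_nP)+K,\qquad K\in\sk(PH_E).
\]
Tensoring with $1_{H_F}$, the product $\prod_i(Pe_iP\otimes 1_{H_F})$ lies in $X^{\circ n}$. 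It remains to show $K\otimes 1_{H_F}\in X^{\circ n}$. Since $1_{H_F}\in F\subseteq F^{\circ n}$, this is immediate from the first inclusion: $K\otimes1_{H_F}\in\sk(PH_E)\otimes_{\min}F^{\circ n}\subseteq X^{\circ n}$. Therefore $Pe_1\cdots e_nP\otimes 1_{H_F}\in X^{\circ n}$. By linearity this holds for the span of such words.

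\emph{Closure step.} The map $c\mapsto PcP\otimes 1_{H_F}$ is contractive on $\mathcal{B}(H_E)$, and $X^{\circ n}$ is closed, so the inclusion passes to the closure $PE^{\circ n}P\otimes 1_{H_F}$.

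The main point requiring care is the compact-perturbation bookkeeping in the second inclusion, together with the density argument that lets us reduce to products of finite-rank operators in the first. The first inclusion has to be established first, since the second uses it to absorb the compact error term $K$.
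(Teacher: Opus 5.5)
Your proof is correct and has the same structure as the paper's. The first inclusion comes from products of elementary tensors in $\mathcal{K}(PH_E)\otimes_{\min}F\subseteq X$. The second comes from absorbing the compact error in $(Pe_1P)\cdots(Pe_nP)$ using the first inclusion together with $1_{H_F}\in F^{\circ n}$.

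The only variation is in the first inclusion. You factor a finite-rank $k$ exactly as $k_1\cdots k_n$. The paper instead writes $(k\otimes a_1)(f_N\otimes a_2)\cdots(f_N\otimes a_n)=kf_N\otimes a_1\cdots a_n$ with finite-rank projections $f_N$, then lets $\|kf_N-k\|\to 0$. Both routes are equally valid.
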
 
\begin{proof}
To see this, let $\{\eta_i\}_{i\in \mathbb{N}}$ be an orthonormal basis of $PH_E$ and let $f_N$ be the orthogonal projection onto $\text{span}\{\eta_i\}_{i=1}^N$. Fix $k\in \sk(PH_E)$ and let $a_1,\dots, a_n\in F$. Then $k\otimes a_1, f_N\otimes a_2, \dots, f_N\otimes a_n \in \sk(PH_E)\otimes_{\min} F\subseteq X$, and thus $kf_N\otimes a_1\dots a_n\in X^{on}$. Now, we use the fact that $\norm{kf_N-k}_{\mathcal{B}(PH_E)}\to 0$ as $N\to \infty$ (because $f_N\to\mathrm{id}$ strongly). So,
\[
\norm{(kf_N\otimes a_1\dots a_n) - (k \otimes a_1\dots a_n)}_{\mathcal{B}(PH_E \otimes H_F)} = \norm{kf_N-k}_{\mathcal{B}(PH_E)}\cdot \norm{a_1\dots a_n}_{\mathcal{B}(H_F)}\to 0
\]
as $N\to \infty$. Thus, $k\otimes a_1\dots a_n \in X^{on}$, which implies that $k\otimes \text{span}\{a_1\dots a_n : a_i\in F\}\subseteq X^{on}$. Therefore, we obtain $k\otimes F^{on}\subseteq X^{on}$ for every $k\in \sk(PH_E)$. Hence, $\sk(PH_E)\odot F^{on}\subseteq X^{on}$, which further implies that
\[
\sk(PH_E)\otimes_{\min} F^{on} = \overline{\sk(PH_E)\odot F^{on}}^{\,\norm{\cdot}_{\mathcal{B}\left(PH_E \otimes H_F\right)}}\subseteq X^{on}.
\]

We now show that $PE^{on}P\otimes 1_{H_F}\subseteq X^{on}$. Let $e_1,\dots,e_n\in E$. Then $Pe_iP\otimes 1_{H_F}\in X$ for all $i$, and hence $(Pe_1P\otimes 1_{H_F})\dots(Pe_nP\otimes 1_{H_F})\in X^{on}$. However,
\[
(Pe_1P\otimes 1_{H_F})\dots(Pe_nP\otimes 1_{H_F})=Pe_1\dots e_nP\otimes 1_{H_F}+ k\otimes 1_{H_F}
\]
for some $k\in \sk(PH_E)$ because $[P,e_i]\in\mathcal{K}(H_E)$. From the previous argument, we know that $k\otimes 1_{H_F}\in X^{on}$, which implies $Pe_1\dots e_nP\otimes 1_{H_F}\in X^{on}$. Thus, $P\,\text{span}\{e_1\dots e_n : e_i \in E\}P\otimes 1_{H_F}\subseteq X^{on}$, and consequently $PE^{on}P\otimes 1_{H_F}\subseteq X^{on}$.
\end{proof}

\begin{lmma}\label{Xon}
$X^{on}=\overline{\sk(PH_E)\otimes_{\min} F^{on} + PE^{on}P\otimes1_{H_F}}\,\,\mbox{ for all } n\in \mathbb{N}$.
\end{lmma}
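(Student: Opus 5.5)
The proof is a double inclusion. By \Lref{C*(X) contains that}, both $\sk(PH_E)\otimes_{\min}F^{on}$ and $PE^{on}P\otimes 1_{H_F}$ lie in $X^{on}$. Since $X^{on}$ is a closed linear subspace by definition, it contains the closure of their sum, which gives ``$\supseteq$''.

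For ``$\subseteq$'', set $Y_n:=\overline{\sk(PH_E)\otimes_{\min} F^{on} + PE^{on}P\otimes1_{H_F}}$. Because $Y_n$ is a closed subspace, it suffices to show that every product $x_1\cdots x_n$ with $x_i\in X$ lies in $Y_n$. Write each factor as $x_i=k_i+Pe_iP\otimes 1_{H_F}$ with $k_i\in\sk(PH_E)\otimes_{\min}F$ and $e_i\in E$, and expand the product multilinearly into $2^n$ terms.

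\emph{The term with no $k_i$.} It equals $Pe_1P\cdots Pe_nP\otimes 1_{H_F}$. Since $[P,e_i]\in\sk(H_E)$ and $P$ is a projection, this equals $Pe_1\cdots e_nP\otimes 1_{H_F}+k\otimes 1_{H_F}$ with $k\in\sk(PH_E)$, exactly as in the proof of \Lref{C*(X) contains that}. Here $Pe_1\cdots e_nP\otimes 1\in PE^{on}P\otimes 1$. Because $1_{H_F}\in F$ and $F$ is unital, $1_{H_F}\in F^{on}$, so $k\otimes 1_{H_F}\in\sk(PH_E)\otimes_{\min}F^{on}$.

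\emph{Terms with at least one $k_i$.} By continuity and density it suffices to take every $k_i$ that appears to be an elementary tensor $c_i\otimes a_i$ with $c_i\in\sk(PH_E)$ and $a_i\in F$. Such a term is then $T\otimes b$, where $T$ is a product of the $c_i$'s and the $Pe_jP$'s and so is compact (at least one factor is compact), and $b$ is the product of the $a_i$'s appearing, with $1_{H_F}$ in the remaining slots. Hence $b$ is a product of exactly $n$ elements of $F$, so $b\in F^{on}$, using again that $1_{H_F}\in F$. Thus the term lies in $\sk(PH_E)\odot F^{on}$.

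\emph{Passing to limits.} For general $k_i$, approximate by finite sums of elementary tensors in the minimal (spatial) norm. The product map is jointly continuous on bounded sets, and $Y_n$ is closed, so the limit term still lies in $Y_n$. This gives $x_1\cdots x_n\in Y_n$, then $\operatorname{span}\{x_1\cdots x_n\}\subseteq Y_n$, and, taking closures, $X^{on}\subseteq Y_n$.

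The only real subtlety is the bookkeeping that the $F$-leg of each mixed term uses \emph{exactly} $n$ factors from $F$. This is what places it in $F^{on}$ rather than some $F^{om}$ with $m<n$, and it relies on padding with the unit $1_{H_F}$. The remaining steps are routine continuity and density arguments.
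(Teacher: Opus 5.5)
Your proof is correct and follows the paper's approach: $\supseteq$ comes from Lemma~\ref{C*(X) contains that}, and $\subseteq$ is the same product bookkeeping. That bookkeeping uses compactness of $[P,e_i]$ for the pure $PEP\otimes 1_{H_F}$ term and padding with $1_{H_F}$ so the $F$-leg lands in $F^{\circ n}$; the paper runs it as an induction starting from $n=1,2$, whereas you expand the $n$-fold product directly, with a correct density/continuity step for non-elementary $k_i$.
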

\begin{proof}
Observe that for all $n\in \mathbb{N}$, we have $X^{on}\subseteq \overline{\sk(PH_E)\otimes_{\min} F^{on} + PE^{on}P\otimes1_{H_F}}\,$. The case of $n=1$ is by definition and the case of $n=2$ is easy to verify, and the general case follows from induction. Combine this with Lemma \ref{C*(X) contains that}. 
\end{proof}

\begin{ppsn}\label{C*(X)}
We have $C^*(X)=\overline{\sk(PH_E)\otimes C^*(F) + PC^*(E)P\otimes1_{H_F}}$.
\end{ppsn}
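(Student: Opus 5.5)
The plan is to obtain $C^*(X)$ as the closure of the increasing union of the spaces $X^{\circ n}$, and then read off the right-hand side from \Lref{Xon}. Since $X$ is a unital $\ast$-closed subspace, we have $X^{\circ n}\subseteq X^{\circ(n+1)}$: insert the unit $P\otimes 1_{H_F}$ as an extra factor. Each $X^{\circ n}$ is $\ast$-closed, and $X^{\circ n}X^{\circ m}\subseteq X^{\circ(n+m)}$. Hence $\bigcup_n X^{\circ n}$ is a unital $\ast$-subalgebra containing $X$ and contained in $C^*(X)$, and therefore
\[
C^*(X)=\overline{\textstyle\bigcup_{n} X^{\circ n}}.
\]
The same argument applied to $E$ and $F$ gives $C^*(E)=\overline{\bigcup_n E^{\circ n}}$ and $C^*(F)=\overline{\bigcup_n F^{\circ n}}$.

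For the inclusion ``$\subseteq$'', denote by $Y$ the closure on the right-hand side. By \Lref{Xon}, each $X^{\circ n}$ lies in the closure of $\sk(PH_E)\otimes_{\min}F^{\circ n}+PE^{\circ n}P\otimes 1_{H_F}$. This set is contained in $Y$, because $F^{\circ n}\subseteq C^*(F)$ and $E^{\circ n}\subseteq C^*(E)$. Since $Y$ is closed, $C^*(X)\subseteq Y$.

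For the inclusion ``$\supseteq$'', it suffices to show two things.
\begin{enumerate}[$(a)$]
\item $\sk(PH_E)\otimes C^*(F)\subseteq C^*(X)$. Take $k\otimes b$ with $b\in C^*(F)$ and choose $b_j\in F^{\circ n_j}$ with $b_j\to b$. Then $k\otimes b_j\in X^{\circ n_j}$ by \Lref{C*(X) contains that}, and $\|k\otimes b_j-k\otimes b\|=\|k\|\,\|b_j-b\|\to 0$. Taking spans and closures, the whole minimal tensor product $\sk(PH_E)\otimes C^*(F)$ lies in $C^*(X)$.
\item $PcP\otimes 1_{H_F}\in C^*(X)$ for every $c\in C^*(E)$. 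Approximate $c$ by $c_j\in E^{\circ n_j}$. Then $Pc_jP\otimes 1_{H_F}\in X^{\circ n_j}$ by \Lref{C*(X) contains that}, and
\[
\|Pc_jP\otimes 1_{H_F}-PcP\otimes 1_{H_F}\|\le\|c_j-c\|\to 0.
\]
\end{enumerate}
Since $C^*(X)$ is a closed subspace, it then contains the closure of the sum, which is $Y$.

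The one step that needs care is that \Lref{Xon} and \Lref{C*(X) contains that} concern the norm closures $X^{\circ n}$ and not merely their algebraic versions. This matters for the identity $C^*(X)=\overline{\bigcup_n X^{\circ n}}$, but it is harmless either way, since closures of spans of products still sit inside $C^*(X)$. There is no serious obstacle beyond this bookkeeping.

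As an optional remark, one could also show that the right-hand side is already closed, by the norm estimate used in \Pref{qqq} with $E$ replaced by $C^*(E)$. The closure bar would then be redundant, but this is not needed for the statement.
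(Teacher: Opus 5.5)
Your proposal is correct and follows essentially the same route as the paper. The inclusion $C^*(X)\subseteq\overline{\sk(PH_E)\otimes C^*(F)+PC^*(E)P\otimes 1_{H_F}}$ comes from \Lref{Xon}, and the reverse inclusion comes from \Lref{C*(X) contains that} together with the density of $\bigcup_n F^{\circ n}$ in $C^*(F)$ and of $\bigcup_n E^{\circ n}$ in $C^*(E)$. Your explicit check that $C^*(X)=\overline{\bigcup_n X^{\circ n}}$ (using the unit to get an increasing chain closed under products and adjoints) fills in a step the paper uses only implicitly.
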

\begin{proof}
By \Lref{Xon}, we have $X^{on}\subseteq \overline{\sk(PH_E)\otimes C^*(F) + PC^*(E)P\otimes1_{H_F}}$ for every $n\in \mathbb{N}$. Thus, $C^*(X)\subseteq \overline{\sk(PH_E)\otimes C^*(F) + PC^*(E)P\otimes1_{H_F}}$. Therefore, we need to show
\[
\sk(PH_E)\otimes C^*(F)+ PC^*(E)P\otimes 1_{H_F}\subseteq C^*(X).
\]
From \Lref{C*(X) contains that}, we have $\{k\}\otimes F^{on}\subseteq X^{on}\subseteq C^*(X)$ for every $n\in \mathbb{N}$ and $k\in \sk(PH_E)$. Thus, $\{k\}\otimes\bigcup_{n=1}^{\infty} F^{on}\subseteq C^*(X)$ for every $k\in \sk(PH_E)$. Hence, we get $\{k\}\otimes C^*(F)=\{k\}\otimes \overline{\bigcup_{n=1}^{\infty} F^{on}}\subseteq C^*(X)$ for all $k\in \sk(PH_E)$. Therefore, $\sk(PH_E)\odot C^*(F)\subseteq C^*(X)$, which implies $\sk(PH_E)\otimes C^*(F)\subseteq C^*(X)$. Similarly, since $PE^{on}P\otimes 1_{H_F}\subseteq X^{on}\subseteq C^*(X)$ for every $n\in \mathbb{N}$, it follows that $PC^*(E)P\otimes 1_{H_F}=P\left(\overline{\bigcup_{n=1}^{\infty}E^{on}}\right)P\otimes 1_{H_F}\subseteq C^*(X)$. Therefore, we have $\sk(PH_E)\otimes C^*(F),\, PC^*(E)P\otimes 1_{H_F}\subseteq C^*(X)$, concluding the proof.
\end{proof}

Next, we prove the following general lemma, which will be useful throughout the article.

\begin{lmma}\label{helpful lemma}
Consider an element $(E,H_E,P)\in\mathrm{Op}_{\mathrm{OS}}$. Let $D$ be a unital $C^*$-algebra in $\mathcal{B}(H_D)$ and consider $$\mathcal{C}(X_D):= \sk(PH_E)\otimes D+ PC^*(E)P\otimes 1_{H_D},$$ a unital operator system in $\mathcal{B}(PH_E\otimes H_D)$. Then, we have the following:
\begin{enumerate}[$(i)$]
\item The map $\varphi: C^*(E)\to \mathcal{C}(X_D)$, defined by $\varphi(e):= PeP\otimes 1_{H_D}$, is an injective unital completely positive map.
\item $(\sk(PH_E)\otimes D)\ \cap\ (PC^*(E)P\otimes 1_{H_D})=\{0\}$.
\item $\mathcal{C}(X_D)$ is a unital $C^*$-subalgebra of $\mathcal{B}(PH_E \otimes H_D)$.
\item $\sk(PH_E)\otimes D$ is an essential ideal of $\mathcal{C}(X_D)$, i.e. it has nonzero intersection with any nonzero ideal in $\mathcal{C}(X_D)$.
\end{enumerate}
\end{lmma}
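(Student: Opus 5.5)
Throughout I use two facts: $[P,c]\in\sk(H_E)$ for every $c\in C^*(E)$, which was noted when checking that $\Phi$ is well defined, and the second defining condition of $\mathrm{Op}_{\mathrm{OS}}$, namely that $PcP\in\sk(PH_E)$ with $c\in C^*(E)$ forces $c=0$.

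\emph{Part $(i)$.} Write $\varphi=(\mathrm{Ad}_P\otimes 1_{H_D})$, i.e.\ the compression $c\mapsto PcP$ followed by the ampliation $T\mapsto T\otimes 1_{H_D}$. Both maps are completely positive and unital, since the unit of $\mathcal{C}(X_D)$ is $P\otimes 1_{H_D}$. For injectivity, if $PcP\otimes 1=0$ then $PcP=0\in\sk(PH_E)$, so $c=0$ by the second defining condition.

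\emph{Part $(ii)$.} I would argue as in \Lref{lemma used in helpful lemma 1}. Suppose $k=PcP\otimes 1$ with $k\in\sk(PH_E)\otimes D$. Pick a state $w$ on $D$ and apply the slice map $\mathrm{id}\otimes w$. It maps $\sk(PH_E)\otimes D$ into $\sk(PH_E)$, because it does so on elementary tensors, is contractive, and $\sk(PH_E)$ is closed. Hence $PcP\in\sk(PH_E)$, so $c=0$.

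\emph{Part $(iii)$.} This is the main step, and there are two things to show: $\mathcal{C}(X_D)$ is a $\ast$-algebra, and it is norm closed.

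For the algebra property, note that $\sk(PH_E)\otimes D$ is an ideal in $\mathcal{B}(PH_E)\otimes_{\min}D$. This is checked on elementary tensors, using that $(PcP\otimes 1)(k\otimes d)=PcPk\otimes d$, and then extended by continuity. For products of compressions,
\[
(Pc_1P\otimes 1)(Pc_2P\otimes 1)=Pc_1c_2P\otimes 1+Pc_1(P-1)c_2P\otimes 1 .
\]
Here $Pc_1(P-1)c_2P=Pc_1(P-1)\cdot(P-1)c_2P$, and $(P-1)c_2P=[c_2,P]P$ is compact. So the correction term lies in $\sk(PH_E)\otimes\bbc 1\subseteq\sk(PH_E)\otimes D$, using that $D$ is unital. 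Closure under the adjoint is clear.

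For closedness, I would copy the argument of \Pref{qqq} with $D$ in place of $C^*(F)$. The map
\[
\tau:C^*(E)\to\big(\mathcal{B}(PH_E)\otimes_{\min}D\big)/\big(\sk(PH_E)\otimes D\big),\qquad c\mapsto PcP\otimes 1+\sk(PH_E)\otimes D,
\]
is a $\ast$-homomorphism by the computation above. It is injective by part $(ii)$ together with the injectivity in part $(i)$. An injective $\ast$-homomorphism is isometric, so $\|k+PcP\otimes 1\|\ge\|c\|$. Given a Cauchy sequence $k_n+Pc_nP\otimes 1$, this inequality makes $(c_n)$ Cauchy in $C^*(E)$. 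The $c_n$ therefore converge, the $k_n$ then converge too, and both limits stay in the respective closed pieces.

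\emph{Part $(iv)$.} Let $J\neq 0$ be an ideal of $\mathcal{C}(X_D)$ and pick $0\neq x\in J$. Write $f_N$ for finite-rank projections increasing strongly to $1$ on $PH_E$. Each $f_N\otimes 1$ lies in $\sk(PH_E)\otimes D$, again because $D$ is unital. Then $(f_N\otimes 1)x\in J\cap(\sk(PH_E)\otimes D)$. If all of these were zero, the strong limit would give $x=0$, since $(f_N\otimes 1)x\to x$ strongly. Hence some $(f_N\otimes 1)x$ is a nonzero element of $J\cap(\sk(PH_E)\otimes D)$.

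I expect the only real obstacle to be the isometry/closedness step in $(iii)$. The key input there is part $(ii)$: it is what makes $\tau$ injective.
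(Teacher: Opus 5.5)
Your proposal is correct. Parts $(i)$--$(iii)$ follow the paper's route:
\begin{enumerate}[$(i)$]
\item compression followed by ampliation;
\item the slice map $\mathrm{id}\otimes w$, as in Lemma~\ref{lemma used in helpful lemma 1};
\item the injective $\ast$-homomorphism $c\mapsto PcP\otimes 1+\mathcal{K}(PH_E)\otimes D$ into the quotient, as in Proposition~\ref{qqq}.
\end{enumerate}
You also make explicit the $\ast$-algebra check for $(iii)$, which the paper leaves implicit.

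There are two harmless sign slips in that check. We have $[c_2,P]P=(1-P)c_2P$ and $(P-1)^2=1-P$, so both of your displayed identities are off by a sign. All you need is that $(P-1)c_2P=[P,c_2]P$ is compact.

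For $(iv)$ your route is genuinely different and shorter. The paper argues contrapositively, starting from an ideal $L$ with $L\cap(\mathcal{K}(PH_E)\otimes D)=\{0\}$ and an element $z=-k+PcP\otimes 1\in L$:
\begin{enumerate}[$(a)$]
\item it obtains $f_NPcP\otimes 1=(f_N\otimes 1)k$ and takes a norm limit;
\item it slices with a state to see that $PcP$ is compact;
\item it uses the $\mathrm{Op}_{\mathrm{OS}}$ condition to get $c=0$, and only then concludes $z=0$.
\end{enumerate}
You simply note that $(f_N\otimes 1)x$ lies in $J\cap(\mathcal{K}(PH_E)\otimes D)$ and converges strongly to $x$.

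Your argument shows that essentiality does not depend on the $\mathrm{Op}_{\mathrm{OS}}$ hypothesis at all. It only needs $\mathcal{K}(PH_E)\otimes D$ to be an ideal acting nondegenerately on $PH_E\otimes H_D$, which is where $1_{H_D}\in D$ is used. In fact the paper's proof already has $(f_N\otimes 1)z=0$ for every $N$ at its first step, so your observation would end it there. The subsequent decomposition, norm limit and slicing are redundant for $(iv)$.
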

\begin{proof}
The proof of $(i)$ follows from the definition of $\mathrm{Op}_{\mathrm{OS}}$, and that of $(ii)$ follows from \Lref{lemma used in helpful lemma 1}. The proof of part $(iii)$ is similar to the proof of \Pref{qqq}. Therefore, the only thing remaining is part $(iv)$.

Let $\{\eta_i\}_{i\in \mathbb{N}}$ be an orthonormal basis of $PH_E$ and $f_N$ be the orthogonal projection onto $\text{span}\{\eta_i\}_{i=1}^N$. We have $f_N\in \sk(PH_E)$ for all $N\in \mathbb{N}$, and thus $f_N\otimes 1_{D}\in \sk(PH_E)\otimes D\subseteq \mathcal{C}(X_D)$. Since $f_N\to\mbox{Id}$ strongly, we have the following:
\begin{eqnarray}\label{Fact1}
\norm{(f_N\otimes1_{H_D})k-k}_{\mathcal{B}(PH_E \otimes H_D)}\longrightarrow 0\quad as\,\,\,N\to \infty.
\end{eqnarray}
for all $k\in \sk(PH_E)\otimes D$, and
\begin{eqnarray}\label{Fact2}
\norm{(f_N PcP)(\eta)-PcP(\eta)}_{PH_E}\longrightarrow 0\quad as\,\,\,N\to \infty.
\end{eqnarray}
for all $c\in C^*(E)$ and $\eta \in PH_E$. Now, let $L$ be an ideal of $\mathcal{C}(X_D)$ such that $L\cap (\sk(PH_E)\otimes D)=\{0\}$. We show that $L=\{0\}$. Let $z=-k+PcP\otimes 1_{H_D}\in L$. Since $f_N\otimes 1_{H_D}\in \sk(PH_E)\otimes D$, we have $(f_N\otimes 1_{H_D})z\in L\cap (\sk(PH_E)\otimes D)=\{0\}$. This implies $(f_N\otimes 1_{H_D})(PcP\otimes 1_{H_D})=(f_N\otimes 1_{H_D})k$, and therefore $f_N PcP\otimes 1_{H_D}=(f_N\otimes 1_{H_D})k$ for all $N\in \mathbb{N}$. Therefore, by \Eref{Fact1} we obtain
\[
\norm{f_N PcP\otimes 1_{H_D}-k}_{\mathcal{B}(PH_E \otimes H_D)}\to 0 \quad \text{as } N\to \infty.
\]
Now, let $w\in S(D)$ be a state on $D$ and consider $S_w:=\mbox{id}_{\mathcal{B}(PH_E)}\otimes w: \mathcal{B}(PH_E)\otimes_{\min} D\to \mathcal{B}(PH_E)\otimes_{\min} \mathbb{C}\cong \mathcal{B}(PH_E)$. Observe that $S_w(T\otimes d)=w(d)T$, and so $S_w(\sk(PH_E)\odot D)\subseteq \sk(PH_E)$. Thus, $S_w(\sk(PH_E)\otimes D)\subseteq \sk(PH_E)$. Hence,
\[
f_N PcP=S_w(f_N PcP\otimes 1_{H_D})\xrightarrow{\text{\tiny$\norm{\cdot}_{\mathcal{B}(PH_E)}$}} S_w(k)=:k_0\in \sk(PH_E) \quad \text{as } N \to \infty.
\]
Therefore, for all $\eta\in PH_E$, we have $\norm{(f_N PcP)(\eta)-k_0(\eta)}_{PH_E}\to 0$ as $N\to \infty$. Combining this with \Eref{Fact2}, we obtain $PcP(\eta)=k_0(\eta)$ for all $\eta\in PH_E$, which implies $PcP=k_0\in \sk(PH_E)$. By hypothesis, this yields $c=0$, and hence $PcP\otimes 1_{H_D}=0$. Therefore,
\[
z=-k+PcP\otimes 1_{H_D}=-k\in \sk(PH_E)\otimes D\implies z\in L\cap(\sk(PH_E)\otimes D).
\]
Therefore, $z=0$, which proves that $L=\{0\}$.
\end{proof}

\begin{lmma}\label{sigma map used in prop number}
The map $\widetilde\sigma:\mathcal{C}(X_D)\to C^*(E)$, defined by $\widetilde\sigma(k+PcP\otimes 1)= c$, is a well defined, surjective, $\ast$-homomorphism with kernel $\,\ker(\widetilde\sigma)=\sk(PH_E)\otimes D$.
\end{lmma}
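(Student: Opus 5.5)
Well-definedness and surjectivity come first. By \Lref{helpful lemma}$(ii)$, every $z\in\mathcal{C}(X_D)$ has a unique decomposition $z=k+PcP\otimes 1$ with $k\in\sk(PH_E)\otimes D$. Moreover, $PcP\otimes 1$ determines $c$, since $c\mapsto PcP$ is injective on $C^*(E)$ by the defining property of $\mathrm{Op}_{\mathrm{OS}}$. Hence $\widetilde\sigma$ is well defined and linear. It is surjective because $\widetilde\sigma(PcP\otimes 1)=c$, and it is unital because $P\otimes 1=P1P\otimes1$ is the unit.

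Next I will show that $\widetilde\sigma$ is a $\ast$-homomorphism. Adjoints are preserved because $(k+PcP\otimes1)^*=k^*+Pc^*P\otimes 1$ and $\sk(PH_E)\otimes D$ is self-adjoint. For products, take $z_j=k_j+Pc_jP\otimes 1$ with $j=1,2$ and expand $z_1z_2$. The terms $k_1k_2$, $k_1(Pc_2P\otimes1)$ and $(Pc_1P\otimes1)k_2$ lie in $\sk(PH_E)\otimes D$. This holds because $\sk(PH_E)\otimes D$ is an ideal in $\mathcal{B}(PH_E)\otimes_{\min}D$ and $PcP\otimes 1$ lies in that algebra. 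To check the ideal property, note that on elementary tensors we have $(T\otimes 1)(k\otimes d)=Tk\otimes d$; the general case follows by density and continuity. For the remaining term,
\[
Pc_1P\,Pc_2P=Pc_1c_2P-Pc_1(1-P)c_2P,
\]
and $Pc_1(1-P)c_2P=Pc_1[P,c_2]\cdots$, more precisely $(1-P)c_2P=[c_2,P]P$, is compact because $[P,c]\in\sk(H_E)$ for all $c\in C^*(E)$ (shown in \Sref{Sec3} via the algebra $\mathscr{A}$). So $Pc_1Pc_2P\otimes1=Pc_1c_2P\otimes 1+k'\otimes 1$ with $k'\otimes1\in\sk(PH_E)\otimes D$, using $1_{H_D}\in D$. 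By uniqueness of the decomposition, $\widetilde\sigma(z_1z_2)=c_1c_2$.

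Finally, the kernel. Clearly $\sk(PH_E)\otimes D\subseteq\ker\widetilde\sigma$. Conversely, if $\widetilde\sigma(k+PcP\otimes1)=0$ then $c=0$, so the element equals $k\in\sk(PH_E)\otimes D$. I expect no real obstacle. The only delicate point is the bookkeeping that compact corrections land in $\sk(PH_E)\otimes D$, which needs $D$ to be unital and the ideal property just described. Note that continuity of $\widetilde\sigma$ is then automatic, since it is a $\ast$-homomorphism between $C^*$-algebras, $\mathcal{C}(X_D)$ being a $C^*$-algebra by \Lref{helpful lemma}$(iii)$.
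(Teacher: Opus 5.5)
Your proposal is correct and follows essentially the paper's route: the paper only says the verification is straightforward using $PC^*(E)P\cap\sk(PH_E)=\{0\}$, and you supply those details (unique decomposition via \Lref{helpful lemma}$(ii)$ plus injectivity of $c\mapsto PcP$, multiplicativity via compactness of $(1-P)c_2P=[c_2,P]P$, and the kernel computation). In the write-up, remove the garbled intermediate expression ``$Pc_1[P,c_2]\cdots$''; the identity $(1-P)c_2P=[c_2,P]P$ that you settle on is the right one.
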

\begin{proof}
Straightforward verification using the fact $PC^*(E)P\cap \sk(PH_E)=\{0\}$.
\end{proof}

\begin{thm}\label{mainresult 0}
The $C^*$-algebra generated by $X:=X(F;E,P)$ in $\mathcal{B}(PH_E\otimes H_F)$ is $$C^*(X)=\sk(PH_E)\otimes C^*(F)+ PC^*(E)P\otimes 1_{H_F}$$ where the sum is linearly direct. Moreover, $\sk(PH_E)\otimes C^*(F)$ is an essential ideal of $C^*(X)$.
\end{thm}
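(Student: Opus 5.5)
The theorem assembles earlier results, so the plan is to apply \Lref{helpful lemma} with $D:=C^*(F)\subseteq\mathcal{B}(H_F)$, the $C^*$-algebra generated by $F$ inside $\mathcal{B}(H_F)$. With this choice we have $\mathcal{C}(X_D)=\sk(PH_E)\otimes C^*(F)+PC^*(E)P\otimes 1_{H_F}$ with $H_D=H_F$. By \Lref{helpful lemma}(iii), this set is already a unital $C^*$-subalgebra of $\mathcal{B}(PH_E\otimes H_F)$. In particular it is norm closed. The key point is that this closedness comes from the Cauchy-sequence argument of \Pref{qqq}, applied with $C^*(E)$, which is closed, in place of $E$.

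Next I would invoke \Pref{C*(X)}. It gives $C^*(X)=\overline{\sk(PH_E)\otimes C^*(F)+PC^*(E)P\otimes 1_{H_F}}$. Since the set inside the closure is $\mathcal{C}(X_D)$, which is closed, the closure can be removed. This yields the displayed formula $C^*(X)=\sk(PH_E)\otimes C^*(F)+PC^*(E)P\otimes 1_{H_F}$.

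The sum is linearly direct by \Lref{helpful lemma}(ii): the two summands intersect only in $\{0\}$. The ideal property is given by \Lref{sigma map used in prop number}, which shows that $\sk(PH_E)\otimes C^*(F)$ is the kernel of the $\ast$-homomorphism $\widetilde\sigma$ and is therefore a closed two-sided ideal. Alternatively, one can check directly that $(PcP\otimes 1)(k\otimes d)=PcPk\otimes d$ lies in the ideal. Essentiality is exactly \Lref{helpful lemma}(iv).

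The only step needing real care is closedness (\Lref{helpful lemma}(iii)), since it is what removes the closure from \Pref{C*(X)}. The other part of (iii), that $\mathcal{C}(X_D)$ is closed under products, is easy. For products in the second summand, $PcP\,Pc'P=Pcc'P+PcP^{\perp}c'P$, and $P^{\perp}c'P=[P,c']P$ is compact by the well-definedness of $\Phi$. Products involving the first summand are absorbed into $\sk(PH_E)\otimes C^*(F)$. So the proof reduces to the earlier lemmas; the main thing to verify is that \Lref{helpful lemma} really does apply with $D=C^*(F)$ and $H_D=H_F$.
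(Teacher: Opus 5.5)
Your proposal is correct and follows the paper's proof, which simply combines \Pref{C*(X)} with \Lref{helpful lemma} for $D=C^*(F)$: closedness from (iii) removes the closure, (ii) gives the direct sum, and (iv) gives essentiality. There is one harmless sign slip: the correct identities are $PcP\,Pc'P=Pcc'P-PcP^{\perp}c'P$ and $P^{\perp}c'P=-[P,c']P$, and this does not affect the compactness you need.
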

\begin{proof} 
Apply \Pref{C*(X)} together with Lemma \ref{helpful lemma}.
\end{proof}

\begin{dfn}[Defn. 1.1, \cite{HZ}]\label{Hawkins Zacharias defn}
Let $A,B$ be unital $C^*$-algebras with faithful representations $\pi_A:A\to\mathcal{B}(H_A)$ and $\pi_B:B\to\mathcal{B}(H_B)$ respectively. A short exact sequence of the form
\[
0\longrightarrow\sk\otimes B\longrightarrow\mathscr{C}\longrightarrow A\longrightarrow 0
\]
is called a Toeplitz type $C^*$-extension if there exists an infinite-dimensional projection $P\in\mathcal{B}(H_A)$ such that
\begin{enumerate}[$(i)$]
\item $[P,a]\in\sk(H_A)$ for all $a\in A$;
\item $\mathscr{C}\cong \sk(PH_A)\otimes B+ PAP\otimes\bbc$;
\item $\sk(PH_A)\otimes B\cap PAP\otimes\bbc=\{0\}$.
\end{enumerate}
\end{dfn}

\begin{rmrk}
Hawkins and Zacharias restricted their attention to separable $C^*$-algebras as they were mainly interested in the metric aspect of spectral triple in the sense of Rieffel \cite{Rie1}. We do not need separability assumption on $C^*$-algebras in our article.
\end{rmrk}

\begin{crlre}\label{mainresult 01}
$C^*(X)$ is a Toeplitz type $C^*$-algebra extension of $C^*(E)$ by the stable ideal $\sk(PH_E)\otimes C^*(F)$, which is an essential ideal.
\end{crlre}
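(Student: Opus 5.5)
The plan is to match the data of \Tref{mainresult 0} with the three conditions in \Dref{Hawkins Zacharias defn}, taking $A=C^*(E)$, $B=C^*(F)$ and $\mathscr{C}=C^*(X)$. The representations are the identity inclusions $C^*(E)\subseteq\mathcal{B}(H_E)$ and $C^*(F)\subseteq\mathcal{B}(H_F)$, and the projection is the given $P$.

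\emph{Step 1: the short exact sequence.} Apply \Lref{helpful lemma} with $D=C^*(F)\subseteq\mathcal{B}(H_F)$. Then $\mathcal{C}(X_D)$ is exactly the algebra $\sk(PH_E)\otimes C^*(F)+PC^*(E)P\otimes 1_{H_F}$, and by \Tref{mainresult 0} this equals $C^*(X)$. \Lref{sigma map used in prop number} supplies the surjective $\ast$-homomorphism $\widetilde\sigma:C^*(X)\to C^*(E)$ with kernel $\sk(PH_E)\otimes C^*(F)$. This gives the exact sequence
\[
0\longrightarrow \sk(PH_E)\otimes C^*(F)\longrightarrow C^*(X)\xrightarrow{\ \widetilde\sigma\ } C^*(E)\longrightarrow 0 .
\]
Since $PH_E$ is an infinite-dimensional separable Hilbert space, $\sk(PH_E)\cong\sk$. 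Hence the ideal is the stable algebra $\sk\otimes C^*(F)$.

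\emph{Step 2: the Toeplitz conditions.} Condition $(i)$ is the well-definedness of $\Phi:\mathrm{Op}_{\mathrm{OS}}\to\mathrm{Op}_{C^*}$ established in \Sref{Sec3}, namely $[P,c]\in\sk(H_E)$ for all $c\in C^*(E)$. Condition $(ii)$ is the equality of $C^*(X)$ with the linear sum in \Tref{mainresult 0}, identifying $1_{H_F}$ with $\bbc$. Condition $(iii)$ is \Lref{helpful lemma}$(ii)$, which is the directness of that sum. By \Dref{Hawkins Zacharias defn} the extension is therefore Toeplitz type.

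\emph{Step 3: essentiality.} That the ideal is essential is \Lref{helpful lemma}$(iv)$, already recorded in \Tref{mainresult 0}.

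There is no real obstacle. The one point to check is that $\mathscr{C}$ and the quotient map in the extension are the ones given by $\widetilde\sigma$, so that the quotient really is $C^*(E)$ and not some isomorphic copy. This holds because $\widetilde\sigma(PcP\otimes 1)=c$ recovers $c$ exactly, and it is well defined because $PC^*(E)P\cap\sk(PH_E)=\{0\}$.
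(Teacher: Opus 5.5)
Your proposal is correct and matches the paper's proof: set $D=C^*(F)$ in \Lref{sigma map used in prop number}, combine it with \Tref{mainresult 0} to get the exact sequence, and check the conditions of \Dref{Hawkins Zacharias defn}. You spell out the checks the paper leaves implicit: the commutator condition from \Sref{Sec3}, directness of the sum from part $(ii)$ of \Lref{helpful lemma}, and essentiality from part $(iv)$.
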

\begin{proof}
Taking $D=C^*(F)$ in \Lref{sigma map used in prop number}, \Tref{mainresult 0} gives us a short exact sequence of $C^*$-algebras
\[
0\longrightarrow \sk(PH_E)\otimes C^*(F)\longrightarrow C^*(X)\longrightarrow C^*(E)\longrightarrow 0
\]
satisfying all the requirements of \Dref{Hawkins Zacharias defn}.
\end{proof}

\begin{crlre}\label{2}
The map $\sigma:X:=X(F;E,P)\to E$ given by $k+PeP\otimes1_{H_F}\mapsto e$ is UCP for any unital operator system $F\subseteq\mathcal{B}(H_F)$ and any $(E,H_E,P)\in \mathrm{Op}_{\mathrm{OS}}\,$.
\end{crlre}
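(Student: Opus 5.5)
The idea is to realise $\sigma$ as the restriction of the $\ast$-homomorphism from \Lref{sigma map used in prop number} (with $D=C^*(F)$). A $\ast$-homomorphism between $C^*$-algebras is completely positive, and so is its restriction to an operator subsystem. The only thing to check is that this restriction has range in $E$ and coincides with $\sigma$.

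By \Tref{mainresult 0}, $C^*(X)=\sk(PH_E)\otimes C^*(F)+PC^*(E)P\otimes 1_{H_F}$, and this sum is linearly direct. This is exactly the $C^*$-algebra $\mathcal{C}(X_D)$ of \Lref{helpful lemma} with $D=C^*(F)\subseteq\mathcal{B}(H_F)$. \Lref{sigma map used in prop number} then gives a surjective $\ast$-homomorphism
\[
\widetilde\sigma:C^*(X)\to C^*(E),\qquad \widetilde\sigma(k+PcP\otimes 1_{H_F})=c .
\]
Because $\widetilde\sigma$ is a $\ast$-homomorphism between unital $C^*$-algebras, it is completely positive. Each amplification $\widetilde\sigma^{(n)}$ is again a $\ast$-homomorphism on $M_n(C^*(X))$, so it maps positives to positives. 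It is also unital, since the unit of $C^*(X)$ is $P\otimes 1_{H_F}=P1P\otimes 1_{H_F}$, which $\widetilde\sigma$ sends to $1$.

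Next we compare $\widetilde\sigma$ with $\sigma$ on $X$. An element of $X$ has the form $k+PeP\otimes 1_{H_F}$ with $k\in\sk(PH_E)\otimes_{\min}F\subseteq \sk(PH_E)\otimes C^*(F)$ and $e\in E\subseteq C^*(E)$. By the directness of the sum (\Lref{helpful lemma}(ii)) this decomposition is unique, and it agrees with the decomposition used to define $\widetilde\sigma$. Hence $\widetilde\sigma|_X=\sigma$, and this also reconfirms the well-definedness in \Lref{lemma used in helpful lemma 2}(ii). In particular $\widetilde\sigma(X)=E\subseteq C^*(E)$.

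It remains to transfer positivity to the operator-system level. Since $X\subseteq C^*(X)$ and $E\subseteq C^*(E)$ are concrete operator systems carrying the matrix order inherited from the ambient $C^*$-algebras, a positive element of $M_n(X)$ is positive in $M_n(C^*(X))$. It is therefore mapped to a positive element of $M_n(C^*(E))$ that lies in $M_n(E)$, and so it is positive in $M_n(E)$. Thus $\sigma$ is UCP. No step here is a real obstacle. The one point needing care is the identification $C^*(X)=\mathcal{C}(X_{C^*(F)})$ together with uniqueness of the decomposition, and both are supplied by \Tref{mainresult 0}. Note also that we use the concrete $C^*$-algebra $C^*(F)\subseteq\mathcal{B}(H_F)$, so $F$ need not be closed for the argument.
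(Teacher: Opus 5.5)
Your proposal is correct and follows the paper's argument: with $D=C^*(F)$ in Lemma~\ref{sigma map used in prop number}, the map $\sigma$ is the restriction to $X$ of the surjective unital $\ast$-homomorphism $\widetilde\sigma:C^*(X)\to C^*(E)$, so it is UCP. The paper simply leaves implicit the routine checks you spell out: unitality, uniqueness of the decomposition, and the fact that $X$ and $E$ inherit their matrix order from the ambient $C^*$-algebras.
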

\begin{proof}
Taking $D=C^*(F)$ in \Lref{sigma map used in prop number}, we have $\sigma$ is the restriction of a surjective $\ast$-homomorphism $\widetilde\sigma:C^*(X)\to C^*(E)$ to the unital operator system $X$.
\end{proof}

\begin{crlre}
We have the following short exact sequence
\[
0\longrightarrow \sk(PH_E)\otimes_{\min}F\longrightarrow X(F;E,P)\overset{\sigma}{\longrightarrow} E\longrightarrow 0
\]
in the operator-system category.
\end{crlre}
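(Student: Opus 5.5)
We have to show three things: the inclusion $\sk(PH_E)\otimes_{\min}F\hookrightarrow X(F;E,P)$ is a complete order injection; $\ker(\sigma)=\sk(PH_E)\otimes_{\min}F$; and $\sigma$ is a complete quotient map, i.e. the induced map $\widetilde\sigma_X:X/\ker(\sigma)\to E$ is a complete order isomorphism. The first two are quick. The inclusion is the restriction of the identity on $\mathcal{B}(PH_E\otimes H_F)$, so it is completely isometric. By \Lref{lemma used in helpful lemma 1} and \Lref{lemma used in helpful lemma 2}, $\sigma(k+PeP\otimes 1)=e$ vanishes exactly when $e=0$, so the kernel is $\sk(PH_E)\otimes_{\min}F$. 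By \Yref{2}, $\sigma$ is UCP and surjective.

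The real content is that $\widetilde\sigma_X^{-1}$ is completely positive. My plan is to factor it through the $C^*$-level quotient. Let $J=\sk(PH_E)\otimes C^*(F)$. By \Tref{mainresult 0} and \Lref{sigma map used in prop number} (with $D=C^*(F)$), $\widetilde\sigma:C^*(X)\to C^*(E)$ is a surjective $\ast$-homomorphism with kernel $J$, so $C^*(X)/J\cong C^*(E)$ as $C^*$-algebras. Given a positive matrix $[e_{ij}]\in M_n(E)^+$, the element $[\varphi(e_{ij})]=[Pe_{ij}P\otimes 1]\in M_n(X)$ is positive, since $\varphi$ is UCP by \Lref{lemma used in helpful lemma 2}. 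Its image in $M_n(X/\ker\sigma)$ is $\widetilde\sigma_X^{-1}([e_{ij}])$. Because the quotient map $X\to X/\ker(\sigma)$ is completely positive, this image is positive. So $\widetilde\sigma_X^{-1}=q_X\circ\varphi$ is a composition of completely positive maps, which handles positivity in the naive sense.

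The main obstacle is that the operator-system quotient of Kavruk--Paulsen--Todorov--Tomforde is not defined by images of positives. Its matrix order cone is the set of $[x_{ij}+\ker\sigma]$ such that for every $\varepsilon>0$ there exist $k_{ij}\in\ker\sigma$ with $[x_{ij}+k_{ij}]+\varepsilon 1\ge 0$. I also need that this cone makes $X/\ker\sigma$ an operator system; this holds because $\ker\sigma$ is the kernel of a UCP map. The precise statement I need is therefore the converse: if $[\widetilde\sigma_X(x_{ij})]\ge 0$ in $M_n(E)$, then $[x_{ij}]$ lies in that quotient cone. The argument above supplies this directly. Writing $x_{ij}=k_{ij}+Pe_{ij}P\otimes 1$ with $[e_{ij}]\ge0$, we have $[x_{ij}-k_{ij}]=[\varphi(e_{ij})]\ge 0$ exactly, with $\varepsilon=0$. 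Hence $\widetilde\sigma_X^{-1}$ is completely positive, and it is unital because $\varphi(1)=P\otimes 1$.

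The remaining point to check is that the norms and orders on $X/\ker\sigma$ are the intended ones. If the quotient is required to be Archimedean, we need $\ker\sigma$ to be completely proximinal or closed. Here $\ker\sigma=\sk(PH_E)\otimes_{\min}F$ is norm closed in $\mathcal{B}(PH_E\otimes H_F)$, and it is the kernel of the UCP map $\sigma$, so the KPTT quotient is well defined. With that in hand, $\sigma$ is a complete quotient map and the sequence is exact in the operator-system category.
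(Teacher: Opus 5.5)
Your proof is correct and is essentially the paper's argument. The paper likewise shows that $\sigma$ is a surjective UCP map with kernel $\sk(PH_E)\otimes_{\min}F$, and that the inverse of the induced bijection $X/\ker(\sigma)\to E$ is $q\circ\varphi$, where $\varphi(e)=PeP\otimes 1_{H_F}$ is the UCP splitting and $q$ is the quotient map. Your explicit check against the KPTT quotient cones (with $\varepsilon=0$) only re-derives why $q$ is completely positive. The detour through the $C^*$-level quotient and the remarks on proximinality are therefore unnecessary.
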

\begin{proof}
By \Yref{2}, we have a UCP $\sigma:X(F;E,P)\to E$ with kernel $\ker(\sigma)=\sk(PH_E)\otimes_{\min} F$ and by \Lref{lemma used in helpful lemma 2} we have a UCP splitting map $\varphi:E\to X(F;E,P)$ with $\sigma\circ\varphi=\mbox{id}$. Consider the quotient map $q:X(F;E,P)\to X(F;E,P)/\ker(\sigma)$, where the quotient is equipped with quotient-operator-system structure \cite{KPTT1}. Thus, we have UCP $q\circ\varphi: E\to X(F;E,P)/\ker(\sigma)$, which is injective in our situation. We have the map $\psi:X(F;E,P)/\ker(\sigma)\to E$ induced by $\sigma$, which is a bijective UCP \cite{KPTT1}. The inverse of $\psi$ is the map $q\circ\varphi$, which proves $E \cong_{\mbox{\tiny coi}} X(F;E,P) / \ker(\sigma)$, i.e., $\sigma$ is a complete quotient map.
\end{proof}
In view of all the above, we finally propose the following.

\begin{dfn}\label{extension dfn}
Let $F\subseteq\mathcal{B}(H_F)$ and $E\subseteq\mathcal{B}(H_E)$ be unital operator systems. A Toeplitz type extension is the unital operator system
\[
X(F;E,P):= \sk(PH_E)\otimes_{\min} F + PEP\otimes \mathbb{C}\cdot 1_{H_{F}}
\]
in $\mathcal{B}(PH_E\otimes H_F)$, where $P\in\mathcal{B}(H_E)$ is an infinite-dimensional projection satisfying
\begin{enumerate}[$(i)$]
\item $[P,e]\in\sk(H_E)$ for all $e\in E$;
\item If $c\in C^*(E)$ be such that $PcP\in\sk(PH_E)$, then $c=0$.
\end{enumerate}
\end{dfn}
The most fundamental example is $\sk(\ell^2(\bbn))+PC^*(\bbz)_{(n)}P$, where $C^*(\bbz)_{(n)},\,n\ge 2,$ is the Fej\'er-Riesz operator system and $P:L^2(S^1)\cong\ell^2(\bbz)\to\ell^2(\bbn)$ is the Hardy projection.

\begin{dfn}\label{extension process}
By Toeplitz type iterative extension process, we mean the sequence of unital operator systems $\{X^j(F,E,P)\}_{j=0}^\infty$ obtained by the iterative process $$X^0(F;E,P):=X(F;E,P)\,,\quad X^j(F;E,P):=X(X^{j-1}(F;E,P);E,P)\,\mbox{ for }j\ge 1.$$
\end{dfn}


\section{Hyperrigidity of Toeplitz type Extension}\label{Sec5}

\subsection{Representations of Toeplitz type \texorpdfstring{$C^*$}{c*}-extension}\label{Sec5.1}

Recall Toeplitz type $C^*$-extensions from \Dref{Hawkins Zacharias defn}. Let $(A,H_A,P)\in \mathrm{Op}_{C^*},$ i.e. $A\subseteq \mathcal{B}(H_A),$ and $B\subseteq \mathcal B(H_B)$ be a unital $C^*$-algebra. Consider $$Y:=\sk(PH_A)\otimes B+PAP\otimes 1_{H_B}=J+PAP\otimes 1_{H_B}$$ with $J:=\sk(PH_A)\otimes B$. Then $Y$ is a unital $C^*$-subalgebra of $\mathcal B(PH_A\otimes H_B)$ with the unit $P\otimes 1_{H_B}$. Let $\pi:Y\to \mathcal{B}(H)$ be a unital representation of $Y$ and $H_1:= \overline{\pi(J)(H)}$. Then $H=H_1\oplus H_1^\perp=:H_1\oplus H_2$ (either summand may be zero). Because $J\trianglelefteq Y$ is an ideal, it follows that $H_1$ reduces $\pi$, i.e., if $P_{H_1}$ is the orthogonal projection onto $H_1$, then $P_{H_1}\pi(y)=\pi(y)P_{H_1}$ for all $y\in Y.$ Thus, we have $\pi=\pi_1\oplus\pi_2$, where $\pi_1:Y\to\mathcal B(H_1)$ and $\pi_2:Y\to\mathcal B(H_2)$ are the restrictions of $\pi:Y\to\mathcal B(H_1\oplus H_2)$ to the reducing subspaces $H_1$ and $H_2$, respectively.

\begin{lmma}
$\pi_1\big|_J:J\to \mathcal B(H_1)$ is a non-degenerate representation of $J$, and $\pi_2(J)=0.$
\end{lmma}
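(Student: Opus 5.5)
The argument only uses that $J$ is a closed two-sided ideal of $Y$ and that $H_1$ is a reducing subspace for $\pi$. Recall $H_1=\overline{\pi(J)(H)}$, which is the closed linear span of $\{\pi(j)\xi: j\in J,\ \xi\in H\}$. I will prove the two claims separately.

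\emph{The claim $\pi_2(J)=0$.} Take $j\in J$ and $\eta\in H_2=H_1^\perp$. Since $\pi(j)$ commutes with $P_{H_1}$, the vector $\pi(j)\eta$ lies in $H_2$. On the other hand, $\pi(j)\eta\in\pi(J)(H)\subseteq H_1$ by the definition of $H_1$. Hence $\pi(j)\eta\in H_1\cap H_2=\{0\}$, so $\pi_2(j)=\pi(j)|_{H_2}=0$.

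\emph{Non-degeneracy of $\pi_1|_J$.} We must show that $\overline{\pi_1(J)(H_1)}=H_1$. The inclusion $\subseteq$ is immediate, because $H_1$ is invariant under $\pi(J)$.

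For the reverse inclusion, take a typical spanning vector $\pi(j)\xi$ of $H_1$, with $j\in J$ and $\xi\in H$. Choose an approximate unit $(u_\lambda)$ of the $C^*$-algebra $J$. Then $\|u_\lambda j-j\|\to 0$, and therefore
\[
\pi(u_\lambda)\pi(j)\xi\longrightarrow\pi(j)\xi .
\]
Each vector $\pi(u_\lambda)\pi(j)\xi=\pi_1(u_\lambda)\bigl(\pi(j)\xi\bigr)$ lies in $\pi_1(J)(H_1)$, since $\pi(j)\xi\in H_1$. So every spanning vector of $H_1$ is a limit of vectors in $\pi_1(J)(H_1)$. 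Taking closed linear spans gives $H_1\subseteq\overline{\pi_1(J)(H_1)}$.

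For a concrete choice of approximate unit, one can use $f_N\otimes 1_{H_B}\in J$ as in the proof of \Lref{helpful lemma}; by \Eref{Fact1} these satisfy $\|(f_N\otimes1)j-j\|\to0$ for every $j\in J$.

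No step here is a genuine obstacle. The only point needing care is the approximate-unit argument for non-degeneracy, and the explicit approximate unit $f_N\otimes 1_{H_B}$ from \Lref{helpful lemma} handles it.
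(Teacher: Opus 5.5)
Your proposal is correct and follows essentially the same route as the paper. For $\pi_2(J)=0$ you observe that $\pi(j)\eta\in H_1\cap H_2=\{0\}$, and for non-degeneracy you apply an approximate unit of $J$ to the spanning vectors $\pi(j)\xi$. The only difference is that the paper first extends $\pi_1(u_\lambda)\xi\to\xi$ to all $\xi\in H_1$ by continuity, whereas you pass to closed linear spans directly; both are fine.
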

\begin{proof}
Let $(u_\lambda)$ be an approximate identity for $J$. We claim that $\xi\in H_1$, $\pi_1(u_\lambda)\xi\to\xi$. Indeed, this is clear for $\xi=\pi(j)\eta$, $j\in J$, since $\pi_1(u_\lambda)\pi(j)\eta=\pi(u_\lambda j)\eta\to\pi(j)\eta$, and then it holds for all $\xi\in H_1$ by continuity. Since $\pi_1(u_\lambda)\xi\in\pi_1(J)H_1$, we obtain $\overline{\pi_1(J)H_1}=H_1.$ Thus $\pi_1\big|_J$ is non-degenerate.

For the second part, let $\xi\in H_2$ and $j\in J$. Since $\pi(j)\xi\in H_1$, while $H_2$ reduces $\pi(Y)$, we also have $\pi(j)\xi\in H_2$. Hence $\pi(j)\xi\in H_1\cap H_2=\{0\}.$ Therefore $\pi_2(j)=0$.   
\end{proof} 

\begin{ppsn}\label{hyperlemma1}
Let $\sigma: Y\to A$ be the quotient homomorphism given by $\sigma(k+PaP\otimes 1_{H_B}):= a$. For a unital representation $\pi: Y\to\mathcal B(H)$, there exist unital representations $\pi_A: A\to\mathcal B(K_1)$ and $\pi_B: B\to \mathcal B(K_2)$ such that $\pi$ is unitarily equivalent to $(\pi_A\circ \sigma)\oplus(\mathrm{id}\otimes\pi_B): Y \to \mathcal B(K_1\oplus (PH_A\otimes K_2))$, given by $$\big((\pi_A\circ \sigma)\oplus (\mathrm{id}\otimes\pi_B)\big)\left(k+PaP\otimes1_{H_B}\right)=\begin{pmatrix} 
\ \  \pi_A(a) & 0\\
0 & (\mathrm{id}_{\sk(PH_A)}\otimes\pi_B)(k)+PaP\otimes 1_{K_2}
\end{pmatrix}$$ (we allow $K_1$ and $K_2$ to be zero Hilbert spaces).
\end{ppsn}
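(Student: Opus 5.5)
We use the decomposition $\pi=\pi_1\oplus\pi_2$ on $H=H_1\oplus H_2$ from the preceding lemma and treat the two summands separately.

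\emph{The summand $\pi_2$.} By the lemma, $\pi_2(J)=0$, so $\pi_2$ factors through $Y/J$. By \Lref{sigma map used in prop number} (with $D=B$), $\sigma$ is a surjective $\ast$-homomorphism with $\ker\sigma=J$, so $Y/J\cong A$. Hence there is a unital representation $\pi_A:A\to\mathcal B(H_2)$ with $\pi_2=\pi_A\circ\sigma$. Set $K_1:=H_2$. Unitality of $\pi_A$ follows from that of $\pi_2$.

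\emph{The summand $\pi_1$.} This is the main step. The restriction $\pi_1|_J$ is a non-degenerate representation of $J=\sk(PH_A)\otimes B\cong\sk\otimes B$. We invoke the standard structure theorem: a non-degenerate representation of $\sk(\mathcal H)\otimes B$ (with $B$ unital) is unitarily equivalent to $\mathrm{id}\otimes\rho$ on $\mathcal H\otimes K_2$ for some unital representation $\rho$ of $B$. Concretely, we proceed as follows.
\begin{enumerate}[$(a)$]
\item Fix a unit vector $\eta\in PH_A$ and let $e=|\eta\rangle\langle\eta|$. Then $\pi_1(e\otimes 1_{H_B})$ is a projection, and we set $K_2:=\pi_1(e\otimes1)H_1$.
\item Define $\pi_B(b):=\pi_1(e\otimes b)|_{K_2}$. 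This is a representation of $B$, since $e\otimes b$ commutes with $e\otimes1$ and $(e\otimes b)(e\otimes b')=e\otimes bb'$. It is unital on $K_2$.
\item Define $U:PH_A\otimes K_2\to H_1$ by $U(\xi\otimes v):=\pi_1(|\xi\rangle\langle\eta|\otimes 1)v$. Using the matrix unit relations, $U$ is an isometry. Its range contains $\pi_1(\theta_{\xi,\zeta}\otimes b)H_1$ for all rank-one $\theta_{\xi,\zeta}$ and all $b\in B$, so by non-degeneracy $U$ is surjective.
\item Check that $U(\mathrm{id}\otimes\pi_B)(k)=\pi_1(k)U$ for elementary tensors $k=\theta\otimes b$, and extend by linearity and continuity to all of $J$.
\end{enumerate}
If $H_1=0$, set $K_2=0$.

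\emph{Extending the intertwining to $Y$.} It remains to identify $U^*\pi_1(PaP\otimes1_{H_B})U$ with $PaP\otimes1_{K_2}$. Since $\pi_1|_J$ is non-degenerate, $\pi_1$ is the unique extension of $\pi_1|_J$ to $Y$ (equivalently, to the multiplier algebra of $J$), determined by
\[
\pi_1(y)\pi_1(j)\xi=\pi_1(yj)\xi.
\]
The representation $\mathrm{id}\otimes\pi_B$ extends to $y\mapsto$ the operator on $PH_A\otimes K_2$ given by $PaP\otimes1_{K_2}$ plus $(\mathrm{id}\otimes\pi_B)(k)$. This extension is compatible, because for $y=PaP\otimes1$ and $j=\theta\otimes b$ we have $yj=(PaP\theta)\otimes b$, and $(PaP\otimes 1_{K_2})(\mathrm{id}\otimes\pi_B)(j)=(\mathrm{id}\otimes\pi_B)(yj)$. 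By uniqueness of the extension from an essential non-degenerate ideal representation, the two agree under $U$.

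Combining the two summands gives the stated unitary equivalence $\pi\simeq(\pi_A\circ\sigma)\oplus(\mathrm{id}\otimes\pi_B)$.

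The main obstacle is the structure theorem for representations of $\sk\otimes B$ together with the uniqueness of extensions. Both are classical, and we plan to carry them out via matrix units as in steps $(a)$–$(d)$ above rather than cite them.
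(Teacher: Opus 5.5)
Your proposal is correct and follows the paper's proof: the same splitting $\pi=\pi_1\oplus\pi_2$, factoring $\pi_2$ through $\sigma$, identifying $\pi_1|_J$ with $\mathrm{id}\otimes\pi_B$, and then determining $\pi_1(PaP\otimes 1_{H_B})$ from its products with $\pi_1(J)$. The differences are minor. You build the unitary explicitly via matrix units, whereas the paper cites the structure theorem for non-degenerate representations of $\sk\otimes B$. Your uniqueness-of-extension step is the abstract form of the paper's computation against $k\otimes 1_{K_2}$ followed by $f_N\otimes 1_{K_2}\to 1$ strongly, and it needs only non-degeneracy, not essentiality.
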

\begin{proof}
We know that $\pi=\pi_1\oplus \pi_2$, where $\pi_1:Y\to \mathcal{B}(H_1)$ and $\pi_2:Y\to \mathcal{B}(H_2)$ are the restrictions of $\pi:Y\to\mathcal B(H_1\oplus H_2)$ . Since \(\pi_2(J)=0\), there exists a representation \(\pi_A: A\to \mathcal B(K_1)\), with $K_1:=H_2$, such that $\pi_2= \pi_A\circ \sigma$, where \(\sigma: Y\to A\) is the quotient homomorphism with kernel $J$.

Regarding \(\pi_1\), since \(\pi_1\big|_J\) is a non-degenerate representation of \(J=\sk(PH_A)\otimes B\), there exist a unitary \(V:H_1\to PH_A\otimes K_2\) and a representation \(\pi_B:B\to \mathcal B(K_2)\) such that \((\mathrm{id}_{\sk(PH_A)}\otimes \pi_B) (k)=V\pi_1\big|_{J}(k)V^*\) for all \(k\in \sk(PH_A)\otimes B\). We now compute \(V\pi_1(PaP\otimes 1_{H_B})V^*\) for \(a\in A\). Observe that for all \(k\in \sk(PH_A)\), we have
\begin{IEEEeqnarray}{lCl}\label{pp}
V\pi_1(PaP\otimes 1_{H_B})V^*(k\otimes 1_{K_2}) &=& V\pi_1(PaP\otimes 1_{H_B})V^* (\mathrm{id}_{\sk(PH_A)}\otimes \pi_B) (k\otimes 1_{H_B})\nonumber\\
&=& V\pi_1(PaP\otimes 1_{H_B})V^* V\pi_1(k\otimes 1_{H_B})V^*\nonumber\\
&=& V\pi_1(PaPk\otimes 1_{H_B})V^*\nonumber\\
&=& PaPk\otimes 1_{K_2}\nonumber\\
&=& (PaP\otimes 1_{K_2})(k\otimes 1_{K_2}).
\end{IEEEeqnarray}
Let \(\{\xi_n\}_{n\in \mathbb{N}}\) be an orthonormal basis of \(PH_A\), and let \(f_N\in \sk(PH_A)\) be the orthogonal projection onto \(\mathrm{span}\{\xi_n\}_{n=1}^N\). Then we have $f_N\otimes 1_{K_2}\longrightarrow 1_{PH_A\otimes K_2}$ strongly. By \Eref{pp}, we get $$(V\pi_1(PaP\otimes 1_{H_B})V^*-PaP\otimes 1_{K_2})(f_N\otimes 1_{K_2})=0 \quad \text{for all } N\in \mathbb{N},$$ and it follows that $V\pi_1(PaP\otimes 1_{H_B})V^*=PaP\otimes 1_{K_2}.$ Therefore, $\pi_1$ is unitarily equivalent to the representation $\mathrm{id}\otimes\pi_B: Y\to \mathcal B(PH_A\otimes K_2)$ given by$$(\mathrm{id}\otimes\pi_B)(k+PaP\otimes 1_{H_B}):= (\mathrm{id}_{\sk(PH_A)}\otimes \pi_B)(k)+PaP\otimes 1_{K_2},$$ which completes the proof.
\end{proof}

Below we characterize unital irreducible representations of $Y$. Let $\mathrm{Irred}(D)$ denotes the set of irreducible unital  representations of a unital $C^*$-algebra $D$, up to unitary equivalence.

\begin{ppsn}\label{hyperrigidity irreducible repn}
Up to unitary equivalence, the irreducible unital representations of $Y$ are
$$\mathrm{Irred}(Y)=\big\{\pi_A\circ\sigma: \pi_A\in \mathrm{Irred}(A)\big\}\bigsqcup\,\big\{\mathrm{id}\otimes\pi_B: \pi_B\in \mathrm{Irred}(B)\big\}.$$
\end{ppsn}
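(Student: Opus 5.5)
Starting from \Pref{hyperlemma1}, every unital representation $\pi$ of $Y$ is unitarily equivalent to $(\pi_A\circ\sigma)\oplus(\mathrm{id}\otimes\pi_B)$ on $K_1\oplus(PH_A\otimes K_2)$. If $\pi$ is irreducible, it has no nontrivial reducing subspace, so one of $K_1$, $K_2$ must be zero. This gives two cases.

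\emph{Case $K_2=0$.} Here $\pi\simeq\pi_A\circ\sigma$. Since $\sigma:Y\to A$ is surjective, $(\pi_A\circ\sigma)(Y)'=\pi_A(A)'$. Hence $\pi$ is irreducible exactly when $\pi_A$ is, and $\pi_A\in\mathrm{Irred}(A)$.

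\emph{Case $K_1=0$.} Here $\pi\simeq\mathrm{id}\otimes\pi_B$ for some unital $\pi_B:B\to\mathcal B(K_2)$, and we must show that $\mathrm{id}\otimes\pi_B$ is irreducible if and only if $\pi_B$ is. The commutant of $(\mathrm{id}\otimes\pi_B)(Y)$ lies inside the commutant of $(\mathrm{id}\otimes\pi_B)(J)\supseteq\sk(PH_A)\otimes\pi_B(B)$. Let $T$ commute with $\sk(PH_A)\otimes 1$, which sits inside $\sk(PH_A)\otimes\pi_B(B)$ since $B$ is unital. Then $T\in\sk(PH_A)'\bar\otimes\mathcal B(K_2)=1\otimes\mathcal B(K_2)$, so $T=1\otimes S$. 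If $T$ also commutes with $k\otimes\pi_B(b)$, then $S\in\pi_B(B)'$. Conversely, any $1\otimes S$ with $S\in\pi_B(B)'$ commutes with $PaP\otimes 1_{K_2}$ and with all of $J$. Therefore
\[
(\mathrm{id}\otimes\pi_B)(Y)'=1\otimes\pi_B(B)',
\]
and $\mathrm{id}\otimes\pi_B$ is irreducible if and only if $\pi_B$ is.

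\emph{Well-definedness and disjointness.} Unitary equivalence classes match up: $\pi_A\simeq\pi_A'$ if and only if $\pi_A\circ\sigma\simeq\pi_A'\circ\sigma$, again by surjectivity of $\sigma$. Also $\mathrm{id}\otimes\pi_B\simeq\mathrm{id}\otimes\pi_B'$ if and only if $\pi_B\simeq\pi_B'$. The forward direction is clear. For the converse, a unitary intertwiner of the tensor representations intertwines the restrictions to $\sk(PH_A)\otimes B$, and by uniqueness of the decomposition of representations of $\sk\otimes B$ it has the form $1\otimes W$; alternatively, compress by $f_1\otimes 1$ for a rank-one projection $f_1$. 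The union is disjoint because representations of the first type kill $J$. Those of the second type do not: for $k\ne0$ we have $(\mathrm{id}\otimes\pi_B)(k\otimes 1)=k\otimes 1_{K_2}\neq 0$, using $K_2\neq0$.

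\emph{Converse inclusion.} Each listed representation is irreducible by the commutant computations above, so every element of the right-hand side lies in $\mathrm{Irred}(Y)$.

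The main obstacle I expect is the commutant computation in the case $K_1=0$. Its key input is the standard fact that the commutant of $\sk(PH_A)\otimes 1$ in $\mathcal B(PH_A\otimes K_2)$ is $1\otimes\mathcal B(K_2)$. I would verify this with matrix units $e_{ij}\otimes 1$ from the basis $\{\xi_n\}$: an operator commuting with all of them has block matrix entries $T_{ij}=\delta_{ij}S$.
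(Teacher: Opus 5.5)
Your proof is correct and follows essentially the same route as the paper: the decomposition of \Pref{hyperlemma1}, surjectivity of $\sigma$ in the first case, and the fact that the commutant of $\sk(PH_A)\otimes 1$ is $1\otimes\mathcal{B}(K_2)$ in the second case. The only differences are that you compute the full commutant $(\mathrm{id}\otimes\pi_B)(Y)'=1\otimes\pi_B(B)'$ instead of testing against invariant projections, and you also verify two points the paper leaves implicit: that the parametrization respects unitary equivalence, and that the two families are disjoint.
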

\begin{proof}
From \Pref{hyperlemma1}, we know that a unital representation $\pi$ of $Y$ is a direct sum of representations $\pi_A\circ\sigma$ and $\mathrm{id}\otimes\pi_B$, so in order for $\pi$ to be irreducible, exactly one of $\pi_A\circ\sigma$ and $\mathrm{id}\otimes\pi_B$ is nonzero.

\textbf{Case 1:} If $\mathrm{id}\otimes\pi_B=0$, then $\pi=\pi_A\circ\sigma$. Since $\sigma:Y\to A$ is a quotient homomorphism, so $\sigma(Y)=A$, and hence $\pi_A\circ\sigma$ is irreducible if and only if $\pi_A$ is irreducible.

\textbf{Case 2:} If $\pi_A\circ\sigma=0$, then $\pi=\mathrm{id}\otimes\pi_B$. We claim that $\mathrm{id}\otimes\pi_B$ is irreducible if and only if $\pi_B$ is irreducible. Suppose $\pi_B$ is not irreducible. Then there exists a nontrivial orthogonal projection $Q\in\mathcal{B}(K_2)$ such that $Q\pi_B(b)=\pi_B(b)Q$ for all $b\in B$. Hence $1_{PH_A}\otimes Q$ is a nontrivial orthogonal projection commuting with $(\mathrm{id}\otimes\pi_B)(y)$ for all $y\in Y$. So $\mathrm{id}\otimes\pi_B$ is not irreducible.

For the converse, suppose $\pi_B$ is irreducible, and let $Q\in \mathcal{B}(PH_A\otimes K_2)$ be an orthogonal projection such that $Q(\mathrm{id}\otimes\pi_B)(y)=(\mathrm{id}\otimes\pi_B)(y)Q$ for all $y\in Y$. In particular, $Q(k\otimes 1_{K_2})=(k\otimes 1_{K_2})Q$ for all $k\in\sk(PH_A)$, and hence $Q=1_{PH_A}\otimes S$, where $S\in\cB(K_2)$ is an orthogonal projection. This is because for any arbitrary Hilbert space $K$, $(\cK(PH_A)\otimes 1_K)'\cap\mathcal{B}(PH_A\otimes K)=\bbc.\,1_{PH_A}\otimes\cB(K)$. Fix any nonzero $T\in\sk(PH_A)$. Then $$(1_{PH_A}\otimes S)(T\otimes \pi_B(b))=(T\otimes \pi_B(b))(1_{PH_A}\otimes S)$$ for all $b\in B$, and hence $T\otimes(S\pi_B(b)-\pi_B(b)S)=0$ for all $b\in B$. Since $T\neq0$, it follows that $S\pi_B(b)=\pi_B(b)S$ for all $b\in B$. As $\pi_B$ is irreducible, $S=0$ or $1_{K_2}$. Thus $Q=0$ or $1_{PH_A\otimes K_2}$, and consequently $\mathrm{id}\otimes\pi_B$ is irreducible.
\end{proof}

\subsection{The Unique Extension Property}\label{Sec5.2}

Recall the unital operator system $X:=X(F;E,P)\subseteq \cB(PH_E\otimes H_F)$ from \Dref{extension dfn}. Also, recall from \Tref{mainresult 0} that
\[
C^*(X)=\sk(PH_E)\otimes C^*(F)+ PC^*(E)P\otimes 1_{H_F}.
\] 
We also know that (\Yref{mainresult 01}) $C^*(X)$ is a Toeplitz type $C^*$-extension of $C^*(E)$ by the stable ideal $\sk(PH_E)\otimes C^*(F)$. 

\begin{dfn}
A unital representation $\pi:C^*(S)\to \cB(H)$ is said to have the unique extension property (UEP) with respect to the unital operator system $S\subseteq B(H_S)$ if given a UCP $\Phi:C^*(S)\to \cB(H)$ such that $\pi\big|_{S}=\Phi\big|_{S}$, we have $\pi=\Phi$. 
\end{dfn}

We recall the following result:

\begin{ppsn}[\text{\cite[Proposition 4.4]{A2}}]
Let \(S\subseteq \cb(H_S)\) be an operator system, and let \(\pi_i:C^*(S)\to\mathcal{B}(H_i)\), \(i=1,2\), be representations such that each $\pi_i$ has the UEP with respect to $S$. Then the direct sum $\pi_1\oplus\,\pi_2:  C^*(S)\to\mathcal{B}(H_1\oplus H_2)$ has the UEP with respect to $S$.
\end{ppsn}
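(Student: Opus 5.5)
Suppose $\Phi:C^*(S)\to\mathcal{B}(H_1\oplus H_2)$ is a UCP map with $\Phi|_S=(\pi_1\oplus\pi_2)|_S$. The aim is to show $\Phi=\pi_1\oplus\pi_2$. The approach is to compress $\Phi$ to the two summands and apply the UEP of each $\pi_i$. Then the multiplicative domain (\Tref{Multiplicative domain theorem}) is used to show that the off-diagonal corners vanish.

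Write $P_1,P_2$ for the projections onto $H_1$ and $H_2$. Define $\Phi_i:=P_i\Phi(\cdot)P_i|_{H_i}$. Each $\Phi_i:C^*(S)\to\mathcal{B}(H_i)$ is UCP, and on $S$ it agrees with $P_i(\pi_1\oplus\pi_2)(\cdot)P_i|_{H_i}=\pi_i$. By the UEP of $\pi_i$ we get $\Phi_i=\pi_i$ on all of $C^*(S)$. Hence, in block form,
\[
\Phi(x)=\begin{pmatrix}\pi_1(x) & \Phi_{12}(x)\\ \Phi_{21}(x) & \pi_2(x)\end{pmatrix}.
\]

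The off-diagonal corners are the main point. Fix $x\in C^*(S)$ and use the Kadison--Schwarz inequality $\Phi(x)^*\Phi(x)\le\Phi(x^*x)$. Compare the $(1,1)$ entries. The left side gives $\pi_1(x)^*\pi_1(x)+\Phi_{21}(x)^*\Phi_{21}(x)$, while the right side gives $\pi_1(x^*x)=\pi_1(x)^*\pi_1(x)$. Therefore $\Phi_{21}(x)^*\Phi_{21}(x)\le 0$, so $\Phi_{21}(x)=0$. The $(2,2)$ entries kill $\Phi_{12}(x)$ in the same way. Thus $\Phi=\pi_1\oplus\pi_2$, which proves the claim.

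An equivalent route is to observe that $\Phi(x^*x)=\Phi(x)^*\Phi(x)$ holds diagonally, place every $x$ in $\mathrm{MD}(\Phi)$, and invoke \Tref{Multiplicative domain theorem}. The direct Schwarz argument is shorter, though, and I will use it. I expect no real obstacle; the only care needed is in correctly compressing a positive operator inequality to a diagonal corner, which is legitimate because compression by $P_i$ preserves order.
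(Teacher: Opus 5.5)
Your proof is correct and complete. Compressing $\Phi$ to $H_i$ gives a UCP map that agrees with $\pi_i$ on $S$, so it equals $\pi_i$ by the UEP. Compressing the Kadison--Schwarz inequality $\Phi(x)^*\Phi(x)\le\Phi(x^*x)$ to the $i$-th diagonal corner then gives $\pi_i(x)^*\pi_i(x)+\Phi_{ji}(x)^*\Phi_{ji}(x)\le\pi_i(x^*x)=\pi_i(x)^*\pi_i(x)$ for $j\neq i$, so the off-diagonal blocks vanish.

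There is no in-paper proof to compare against. The paper quotes this direction from Arveson and proves only the converse (a direct summand of a representation with the UEP has the UEP). That is the easy half: one pads a competing UCP map $\psi$ on $H_1$ to $\psi\oplus\pi_2$ and never has to control off-diagonal terms. The Schwarz step is exactly the extra ingredient your direction needs. It is in the same spirit as the paper's use of \Tref{Multiplicative domain theorem}, together with compressions by isometries, in the proofs of \Pref{hyperrigidity uep 1} and \Pref{hyperrigidity uep 2}.

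Two small remarks. First, your argument extends verbatim to an arbitrary family $\{\pi_i\}$: the $i$-th column gives $\sum_{j\neq i}\Phi_{ji}(x)^*\Phi_{ji}(x)\le 0$ as a strongly convergent sum. Second, your aside about the multiplicative-domain route is slightly circular as phrased. The diagonal corners of $\Phi(x)^*\Phi(x)$ agree with those of $\Phi(x^*x)$ only after the Schwarz step has already killed the $\Phi_{ji}(x)$, and at that point $\Phi=\pi_1\oplus\pi_2$ is already established. So it is not an independent route, but you do not rely on it.
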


Note that the converse is also true. Suppose $\pi_i:C^*(S)\to\cB(H_i)$, $i=1,2$, are representations such that $\pi_1\oplus\pi_2:C^*(S)\to\cB(H_1\oplus H_2)$ has the UEP with respect to $S$. To see that $\pi_1$ has the UEP with respect to $S$, let $\psi:C^*(S)\to\cB(H_1)$ be a UCP map such that $\psi\big|_S=\pi_1\big|_S$. Then $\psi\oplus\pi_2:C^*(S)\to\cB(H_1\oplus H_2)$ is UCP and $(\psi\oplus\pi_2)\big|_S=(\pi_1\oplus\pi_2)\big|_S$. Hence, by the UEP of $\pi_1\oplus\pi_2$, we have $\psi\oplus\pi_2=\pi_1\oplus\pi_2,$ which gives $\psi=\pi_1$. Similarly, $\pi_2$ has the UEP with respect to $S$. 

\begin{notation}
In accordance to \Sref{Sec5.1}, we set $A:=C^*(E)$ and $B:=C^*(F)$ throughout the rest of this subsection.
\end{notation}

From \Pref{hyperlemma1}, we know that any unital representation $\pi: C^*(X)\to\mathcal B(H)$ is unitarily equivalent to $(\pi_A\circ\sigma)\oplus(\mathrm{id}\otimes\pi_B)$, where $\sigma: C^*(X)\to C^*(E)$ is the quotient homomorphism given by $\sigma(k+PaP\otimes 1_{H_B}):= a$. So $\pi$ has the UEP with respect to $X$ if and only if $(\pi_A\circ\sigma)\oplus(\mathrm{id}\otimes\pi_B)$ has the UEP with respect to $X$. Further,
\begin{IEEEeqnarray}{lCl}\label{hyperrigidity uep of X}
& & (\pi_A\circ\sigma)\oplus(\mathrm{id}\otimes\pi_B)\ \text{has the UEP with respect to }X\nonumber\\
&\Longleftrightarrow& \left\{ \begin{array}{l} \pi_A\circ\sigma\ \text{has the UEP with respect to }X,\\[2mm] (\mathrm{id}\otimes\pi_B)\ \text{has the UEP with respect to }X.
\end{array}
\right.
\end{IEEEeqnarray}

\begin{ppsn}\label{hyperrigidity uep 1}
We have $\pi_A:A=C^*(E)\to \cb(K_1)$ has the UEP with respect to $E$ if and only if $\pi_A\circ \sigma:C^*(X)\to \cb(K_1)$ has the UEP with respect to $X$.
\end{ppsn}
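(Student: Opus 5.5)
Both directions rest on the UCP splitting $\varphi:A\to C^*(X)$, $\varphi(a)=PaP\otimes 1_{H_F}$, of \Lref{helpful lemma}$(i)$. It satisfies $\sigma\circ\varphi=\mathrm{id}_A$ and maps $E$ into $X$. The plan is to transport UCP maps back and forth along $\sigma$ and $\varphi$.

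($\Leftarrow$) Assume $\pi_A\circ\sigma$ has the UEP with respect to $X$. Let $\psi:A\to\cB(K_1)$ be UCP with $\psi|_E=\pi_A|_E$, and set $\Psi:=\psi\circ\sigma:C^*(X)\to\cB(K_1)$, which is UCP. For $x=k+PeP\otimes 1\in X$ we have $\sigma(x)=e\in E$, so $\Psi(x)=\psi(e)=\pi_A(e)=(\pi_A\circ\sigma)(x)$. The UEP then gives $\psi\circ\sigma=\pi_A\circ\sigma$, and surjectivity of $\sigma$ yields $\psi=\pi_A$.

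($\Rightarrow$) Assume $\pi_A$ has the UEP with respect to $E$. Let $\Phi:C^*(X)\to\cB(K_1)$ be UCP with $\Phi|_X=\pi_A\circ\sigma|_X$, so in particular $\Phi$ vanishes on $\sk(PH_E)\otimes_{\min}F$. The strategy is as follows.
\begin{enumerate}[$(1)$]
\item Show that $\Phi$ annihilates the whole ideal $J=\sk(PH_E)\otimes C^*(F)$.
\item Conclude that $\Phi$ factors as $\Phi=\psi\circ\sigma$ for a UCP map $\psi:A\to\cB(K_1)$.
\item Deduce the UEP conclusion.
\end{enumerate}
For (1), I use the multiplicative domain (\Tref{Multiplicative domain theorem}). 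The elements $f_N\otimes 1_{H_F}$ lie in $\sk(PH_E)\otimes F\subseteq X$ and are projections. Since $\Phi(f_N\otimes 1)=0$ and $\Phi((f_N\otimes 1)^*(f_N\otimes 1))=\Phi(f_N\otimes 1)=0$, each $f_N\otimes 1$ belongs to $\mathrm{MD}(\Phi)$. Hence $\Phi((f_N\otimes 1)j)=\Phi(f_N\otimes1)\Phi(j)=0$ for every $j\in J$. Because $(f_N\otimes 1)j\to j$ in norm for $j\in J$ (as in \Eref{Fact1}), continuity gives $\Phi(J)=0$.

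For (2), $\Phi$ descends to a map on $C^*(X)/J\cong A$, namely $\psi:=\Phi\circ\varphi$. Then $\psi\circ\sigma=\Phi$: for $y=k+PaP\otimes 1$ we get $\Phi(y)=\Phi(PaP\otimes1)=\psi(a)$, using $\Phi(k)=0$. Being a composition of UCP maps, $\psi$ is UCP.

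For (3), for $e\in E$ we have $\psi(e)=\Phi(PeP\otimes 1)=\pi_A(\sigma(PeP\otimes1))=\pi_A(e)$. The UEP of $\pi_A$ gives $\psi=\pi_A$, hence $\Phi=\pi_A\circ\sigma$.

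The main obstacle is step (1): passing from vanishing on $\sk\otimes F$ to vanishing on the full ideal $\sk\otimes C^*(F)$. The multiplicative-domain trick with the projections $f_N\otimes 1_{H_F}\in X$ should handle it, since it only uses elements of $X$ itself together with the approximate-unit property of $(f_N\otimes 1)$ on $J$.
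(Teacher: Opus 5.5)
Your proposal is correct and follows the paper's proof essentially step for step. The reverse implication uses the same surjectivity-of-$\sigma$ argument, and the forward implication uses the same multiplicative-domain trick with the projections $f_N\otimes 1_{H_F}\in X$ to show $\Phi$ vanishes on $\sk(PH_E)\otimes C^*(F)$. The only cosmetic difference is that you build the factorization explicitly as $\psi=\Phi\circ\varphi$ from the UCP splitting, whereas the paper invokes a general factorization result for UCP maps vanishing on the kernel of a quotient map (Proposition 3.6 of Kavruk--Paulsen--Todorov--Tomforde).
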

\begin{proof}
Suppose $\pi_A\circ \sigma$ has the UEP with respect to $X$. Let $\phi:C^*(E)\to \cb(K_1)$ be a UCP map such that $\phi\big|_{E}=\pi_A\big|_{E}$. Since $\sigma(X)=E$, we have $(\phi\circ\sigma)\big|_{X}=(\pi_A\circ\sigma)\big|_{X}.$ Hence, by the UEP of $\pi_A\circ\sigma$, we get
$\phi\circ\sigma=\pi_A\circ\sigma.$ Since $\sigma$ is surjective, it follows that $\phi=\pi_A$.

Conversely, suppose that $\pi_A$ has the UEP with respect to $E$. Let $\phi:C^*(X)\to \cb(K_1)$ be a UCP map such that $\phi\big|_{X}=(\pi_A\circ\sigma)\big|_{X}.$ Let $f_N\in\sk(PH_E)$ be as described in the proof of \Lref{helpful lemma} (or \Pref{hyperlemma1}). Then $f_N\otimes1_{H_F}\in X$, and since $\pi_A\circ\sigma$ vanishes on $\sk(PH_E)\otimes C^*(F)$, we have $$\phi(f_N\otimes1_{H_F})=(\pi_A\circ\sigma)(f_N\otimes1_{H_F})=0.$$ Since $f_N\otimes1_{H_F}$ is an orthogonal projection, we have $$\phi((f_N\otimes1_{H_F})^*(f_N\otimes1_{H_F}))=\phi(f_N\otimes1_{H_F})=0=\phi(f_N\otimes1_{H_F})^*\phi(f_N\otimes1_{H_F}).$$ Thus, by \Tref{Multiplicative domain theorem}, $f_N\otimes1_{H_F}\in \mathrm{MD}(\phi)$ (multiplicative domain). Hence \begin{IEEEeqnarray}{lCl}\label{qq}
\phi\big((f_N\otimes1_{H_F})b\big)=\phi(f_N\otimes1_{H_F})\phi(b)=0
\end{IEEEeqnarray}
for all $b\in C^*(X)$. In particular, for $b\in\sk(PH_E)\otimes C^*(F)$, $\phi((f_N\otimes1_{H_F})b)=0.$ By \eqref{Fact1}, we have $\|(f_N\otimes1_{H_F})b-b\|\longrightarrow0$ for all $b\in\sk(PH_E)\otimes C^*(F)$. Therefore, by the continuity of $\phi$, and \Eref{qq}, we have $\phi({\sk(PH_E)\otimes C^*(F))}=0.$ Since $\sigma:C^*(X)\to C^*(E)$ is a surjective $\ast$-homomorphism with $\ker\sigma=\sk(PH_E)\otimes C^*(F)$, by \cite[Proposition 3.6]{KPTT1} we have a UCP map $\widetilde{\phi}:C^*(E)\to\cb(K_1)$ such that $\phi=\widetilde{\phi}\circ\sigma.$ Now, for $e\in E$, as $PeP\otimes 1_{H_F}\in X$, so  $$\widetilde{\phi}(e) =\widetilde{\phi}\bigl(\sigma(PeP\otimes1_{H_F})\bigr)  =\phi(PeP\otimes1_{H_F}) =\pi_A\bigl(\sigma(PeP\otimes1_{H_F})\bigr) =\pi_A(e).$$ Hence $\widetilde{\phi}\big|_{E}=\pi_A\big|_{E}.$ Since $\pi_A$ has the UEP with respect to $E$, it follows that $\widetilde{\phi}=\pi_A.$ Therefore $\phi=\widetilde{\phi}\circ\sigma=\pi_A\circ\sigma,$  which completes the proof.
\end{proof}

\begin{ppsn}\label{hyperrigidity uep 2}
We have $\pi_B:B=C^*(F)\to \cb(K_2)$ has the UEP with respect to $F$ if and only if $\mathrm{id}\otimes\pi_B:C^*(X)\to \cb(PH_E\otimes K_2)$ has the UEP with respect to $X$. 
\end{ppsn}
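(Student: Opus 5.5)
For the implication ``$\mathrm{id}\otimes\pi_B$ has the UEP $\Rightarrow$ $\pi_B$ has the UEP'', I will lift a competing map from $B$ to $C^*(X)$. Let $\psi:B\to\cb(K_2)$ be UCP with $\psi|_F=\pi_B|_F$. By \Tref{P2}, $\mathrm{id}_{\cb(PH_E)}\otimes_{\min}\psi:\cb(PH_E)\otimes_{\min}B\to\cb(PH_E)\otimes_{\min}\cb(K_2)$ is UCP. Since $C^*(X)\subseteq\cb(PH_E)\otimes_{\min}B$ by \Tref{mainresult 0}, its restriction $\Phi$ to $C^*(X)$ is UCP, and
\[
\Phi(k+PaP\otimes 1_{H_F})=(\mathrm{id}\otimes\psi)(k)+PaP\otimes 1_{K_2}.
\]
On $X$ the element $k$ lies in $\sk(PH_E)\otimes_{\min}F$, where $\psi=\pi_B$. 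Hence $\Phi|_X=(\mathrm{id}\otimes\pi_B)|_X$, and the UEP gives $\Phi=\mathrm{id}\otimes\pi_B$. Evaluating at $T\otimes b$ for a fixed nonzero $T\in\sk(PH_E)$ yields $T\otimes(\psi(b)-\pi_B(b))=0$, so $\psi=\pi_B$.

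For the converse, assume $\pi_B$ has the UEP with respect to $F$, and let $\Phi:C^*(X)\to\cb(PH_E\otimes K_2)$ be UCP with $\Phi|_X=(\mathrm{id}\otimes\pi_B)|_X$. Fix an orthonormal basis $\{\xi_n\}$ of $PH_E$ with matrix units $e_{ij}\in\sk(PH_E)$. Then $e_{ij}\otimes 1_{H_F}\in X$ and $\Phi(e_{ij}\otimes 1_{H_F})=e_{ij}\otimes 1_{K_2}$. Since
\[
\Phi\bigl((e_{ij}\otimes1)^*(e_{ij}\otimes1)\bigr)=\Phi(e_{jj}\otimes 1)=e_{jj}\otimes 1=\Phi(e_{ij}\otimes1)^*\Phi(e_{ij}\otimes1),
\]
and symmetrically for $e_{ij}e_{ij}^*=e_{ii}$, \Tref{Multiplicative domain theorem} puts every $e_{ij}\otimes 1_{H_F}$ in $\mathrm{MD}(\Phi)$. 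Because $\mathrm{MD}(\Phi)$ is a $C^*$-algebra, all of $\sk(PH_E)\otimes 1_{H_F}$ lies in it, and $\Phi$ agrees with $\mathrm{id}\otimes\pi_B$ there.

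Next I define $\psi:B\to\cb(K_2)$ by compressing $\Phi$: let $V:K_2\to PH_E\otimes K_2$ be the isometry $\eta\mapsto\xi_1\otimes\eta$, and set $\psi(b):=V^*\Phi(e_{11}\otimes b)V$. To see $\psi$ is unital and completely positive, note that by the multiplicative domain property
\[
\Phi(e_{11}\otimes b)=(e_{11}\otimes1)\Phi(e_{11}\otimes b)(e_{11}\otimes1)=e_{11}\otimes\psi(b),
\]
and $b\mapsto e_{11}\otimes b$ is completely positive with $\psi(1)=1$. Since $e_{11}\otimes f\in X$, we get $\psi|_F=\pi_B|_F$, so the UEP gives $\psi=\pi_B$. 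Using the multiplicative domain again,
\[
\Phi(e_{ij}\otimes b)=(e_{i1}\otimes1)\Phi(e_{11}\otimes b)(e_{1j}\otimes1)=e_{ij}\otimes\pi_B(b).
\]
By linearity, density of $\mathrm{span}\{e_{ij}\otimes b\}$ in $\sk(PH_E)\otimes B$, and continuity, $\Phi=\mathrm{id}\otimes\pi_B$ on the ideal $\sk(PH_E)\otimes B$.

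It remains to treat $PaP\otimes 1_{H_F}$ for $a\in A$. Since $e_{ij}\otimes1\in\mathrm{MD}(\Phi)$ and $PaPe_{ij}\in\sk(PH_E)$, we have
\[
\Phi(PaP\otimes1)(e_{ij}\otimes1_{K_2})=\Phi(PaPe_{ij}\otimes1)=PaPe_{ij}\otimes1_{K_2}.
\]
Letting $f_N=\sum_{i\le N}e_{ii}\to 1$ strongly gives $\Phi(PaP\otimes1_{H_F})=PaP\otimes1_{K_2}$, exactly as in the proof of \Pref{hyperlemma1}. Hence $\Phi=\mathrm{id}\otimes\pi_B$ on all of $C^*(X)$.

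The main point requiring care is the second direction: the matrix units must be placed in the multiplicative domain, and the compression $\psi$ must be shown to be well-defined and UCP so that the UEP of $\pi_B$ can be applied. The rest is bookkeeping with strong limits.
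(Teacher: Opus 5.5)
Your proof is correct and follows the paper's argument. The first direction is identical, and the second direction uses the same tools: compression to a rank-one corner to produce a UCP map on $C^*(F)$ agreeing with $\pi_B$ on $F$, the UEP of $\pi_B$, the multiplicative domain, and the strong limit along $f_N$ for the $PC^*(E)P\otimes 1_{H_F}$ part. The only variation is how you spread the corner identity: you place all matrix units $e_{ij}\otimes 1_{H_F}$ in $\mathrm{MD}(\Phi)$ and transport the single identity at $e_{11}$, whereas the paper repeats the compression for every unit vector $\xi_0$ and reaches arbitrary rank-one operators by polarization.
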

\begin{proof}
Suppose $\mathrm{id}\otimes\pi_B$ has the UEP with respect to $X$. Let $\psi:C^*(F)\to \cb(K_2)$ be a UCP map such that $\psi\big|_{F}=\pi_B\big|_{F}$. Since $\psi$ is UCP, $$\mathrm{id}_{\cb(PH_E)}\otimes\psi:\cb(PH_E)\otimes_{\min}C^*(F)\to \cb(PH_E\otimes K_2)$$ is also UCP. Therefore, $\phi:=(\mathrm{id}_{\cb(PH_E)}\otimes\psi)\big|_{C^*(X)}:C^*(X)\to \cb(PH_E\otimes K_2)$ is UCP and satisfies $\phi\big|_{X}=(\mathrm{id}\otimes\pi_B)\big|_{X}.$ By the UEP of $\,\mathrm{id}\otimes\pi_B$, we have $\phi=\mathrm{id}\otimes\pi_B$. Now fix any nonzero $T\in\sk(PH_E)$ and let $c\in C^*(F)$. Then $$T\otimes\psi(c)=\phi(T\otimes c)=(\mathrm{id}\otimes\pi_B)(T\otimes c)=T\otimes\pi_B(c).$$ Since $T\neq0$, it follows that $\psi(c)=\pi_B(c)$ for all $c\in C^*(F)$. Thus $\psi=\pi_B$.

Conversely, suppose $\pi_B$ has the UEP with respect to $F$. Let $\phi: C^*(X)\to \cb(PH_E\otimes K_2)$ be a UCP such that $\phi\big|_{X}=(\mathrm{id}\otimes\pi_B)\big|_{X}$. Let $\xi_0\in PH_E$ be a unit vector and let $V_0:K_2\to PH_E\otimes K_2$ be given by $\eta\mapsto \xi_0\otimes \eta$. Let $e_0:=|\xi_0\rangle\langle\xi_0|\in \sk(PH_E)$. We use the following basic facts related to $V_0$,
$$V_0^*(S\otimes T)V_0=\langle S\xi_0,\xi_0\rangle T,\,\,V_0^*V_0=1_{K_2},\,\,V_0AV_0^*=e_0\otimes A,\,\mbox{ and }\,V_0V_0^*=e_0\otimes 1_{K_2}.$$
Define $\psi:C^*(F)\to \cb(K_2)$ by $\psi(c):=V_0^*\phi(e_0\otimes c)V_0$. We claim that $\psi$ is a UCP. For unitality, $\psi(1_{H_F})=V_0^*\phi(e_0\otimes1_{H_F})V_0.$ Since $\phi\big|_{X}=(\mathrm{id}\otimes\pi_B)\big|_{X}$, we have $$ \psi(1_{H_F})=V_0^*(\mathrm{id}\otimes\pi_B)(e_0\otimes1_{H_F})V_0 =V_0^*(e_0\otimes1_{K_2})V_0 =\langle e_0\xi_0,\xi_0\rangle1_{K_2} =1_{K_2}.$$
For complete positivity, observe that $c\mapsto e_0\otimes c$ is $\ast$-homomorphism, $\phi$ is assumed to be UCP, and conjugation by $V_0$ is also cp. Further, for $f\in F$ we know that $e_0\otimes f\in X$, and hence $\phi(e_0\otimes f)=(\mathrm{id}\otimes\pi_B)(e_0\otimes f)=e_0\otimes\pi_B(f)$. Therefore, $$\psi(f)=V_0^*\phi(e_0\otimes f)V_0=V_0^*(e_0\otimes\pi_B(f))V_0=\langle e_0\xi_0,\xi_0\rangle\pi_B(f) =\pi_B(f).$$ This proves that $\psi:C^*(F)\to\cb(K_2)$ is UCP that agrees with $\pi_B:C^*(F)\to\cb(K_2)$ on $F$. By the UEP of $\pi_B$ with respect to $F$, we get $\psi=\pi_B$.

Now we show that $\phi(e_0\otimes c)=e_0\otimes\pi_B(c)$ for all $c\in C^*(F)$. Observe that $e_0\otimes1_{H_F}\in\mathrm{MD}(\phi)$, so for all $z\in C^*(X)$ we have $\phi((e_0\otimes1_{H_F})z)=(e_0\otimes1_{K_2})\phi(z)$ and $\phi(z(e_0\otimes1_{H_F}))=\phi(z)(e_0\otimes1_{K_2}).$ Let $z=e_0\otimes c$, for $c\in C^*(F)$. Clearly, $$ (e_0\otimes1_{H_F}) (e_0\otimes c) =e_0\otimes c =(e_0\otimes c)(e_0\otimes1_{H_F}).$$ Hence,
\begin{IEEEeqnarray*}{lCl}
\phi(e_0\otimes c) &=& \phi((e_0\otimes1_{H_F}) (e_0\otimes c)(e_0\otimes1_{H_F}))\\
&=& (e_0\otimes1_{K_2})\phi(e_0\otimes c)(e_0\otimes1_{K_2})\\
&=& V_0V_0^*\phi(e_0\otimes c)V_0V_0^*\\
&=& V_0\psi(c)V_0^*\\
&=& V_0\pi_B(c)V_0^*\\
&=& e_0\otimes\pi_B(c).
\end{IEEEeqnarray*}
Fixing $c\in C^*(F)$, we have proved that $$\phi\big((|\xi_0\rangle\langle\xi_0|)\otimes c\big)=(|\xi_0\rangle\langle\xi_0|)\otimes\pi_B(c)$$ for all unit vectors $\xi_0\in PH_E$. Then by linearity we have $$ \phi\big((|\xi\rangle\langle\xi|)\otimes c\big) =(|\xi\rangle\langle\xi|)\otimes\pi_B(c) $$ for all $\xi\in PH_E$. Now using the polarization identity $$|\xi\rangle\langle\xi'|=\frac14\sum_{k=0}^{3}i^k|\xi+i^k\xi'\rangle\langle\xi+i^k\xi'|,$$ by linearity we get $$\phi\big((|\xi\rangle\langle\xi'|)\otimes c\big)=(|\xi\rangle\langle\xi'|)\otimes\pi_B(c)$$ for all $\xi,\xi'\in PH_E$. Therefore, for any finite rank operator $k_0\in\sk(PH_E)$, we have $\phi(k_0\otimes c)=k_0\otimes\pi_B(c)$. Then the continuity of $\phi$ gives $\phi(k\otimes c)=k\otimes\pi_B(c)$ for all $k\in\sk(PH_E)$. Therefore, we get
\begin{IEEEeqnarray}{lCl}\label{hyperrigid eq 0}
\phi\big|_{\sk(PH_E)\otimes C^*(F)}=\mathrm{id}_{\sk(PH_E)}\otimes \pi_B=(\mathrm{id}\otimes\pi_B)\big|_{\sk(PH_E)\otimes C^*(F)}.
\end{IEEEeqnarray}
Let $f_N\in\sk(PH_E)$ be as described in the proof of \Lref{helpful lemma}. Then $PePf_N\otimes1_{H_F}\in\sk(PH_E)\otimes C^*(F)$ for all $e\in C^*(E)$, and so for all $N\in\mathbb{N}$ and $e\in C^*(E)$ we have
\begin{IEEEeqnarray}{lCl}\label{hyperrigid eq 1}
\phi(PePf_N\otimes1_{H_F})=(\mathrm{id}\otimes\pi_B)(PePf_N\otimes1_{H_F})=(PeP\otimes1_{K_2})(f_N\otimes1_{K_2}).
\end{IEEEeqnarray}
Since $f_N\otimes1_{H_F}$ is an orthogonal projection and $\phi(f_N\otimes1_{H_F})=f_N\otimes1_{K_2}$ is also an orthogonal projection, we have $$\phi\big((f_N\otimes1_{H_F})(f_N\otimes1_{H_F})^*\big)=\phi(f_N\otimes1_{H_F})\phi(f_N\otimes1_{H_F})^*.$$ Thus, by \Tref{Multiplicative domain theorem}, $f_N\otimes1_{H_F}\in \mathrm{MD}(\phi)$. Hence $$ \phi(b(f_N\otimes1_{H_F})) = \phi(b)\phi(f_N\otimes1_{H_F}) $$ for all $b\in C^*(X)$. In particular, for $e\in C^*(E)$, we have
\begin{IEEEeqnarray}{lCl}\label{hyperrigid eq 2}
\phi(PePf_N\otimes1_{H_F}) = \phi(PeP\otimes1_{H_F})(f_N\otimes1_{K_2})\,, \quad N\in\mathbb{N}.
\end{IEEEeqnarray}
Now \eqref{hyperrigid eq 1} and \eqref{hyperrigid eq 2} together imply $$  \big(\phi(PeP\otimes1_{H_F})-(PeP\otimes1_{K_2})\big)(f_N\otimes1_{K_2})=0 $$ for all $N\in\mathbb{N}$. Since $f_N\otimes1_{K_2}\longrightarrow1_{PH_E\otimes K_2}$ strongly, we get
\begin{equation}\label{hyperrigid eq 3}
\phi(PeP\otimes1_{H_F})=PeP\otimes1_{K_2}=(\mathrm{id}\otimes\pi_B)(PeP\otimes1_{H_F})
\end{equation}
for all $e\in C^*(E)$. Finally, \eqref{hyperrigid eq 0} and \eqref{hyperrigid eq 3} shows $\phi=\mathrm{id}\otimes\pi_B$, which concludes the proof.
\end{proof}

The main result of this subsection is the following.
\begin{thm}\label{0th main result of hyperrigidity}
We have $(\pi_A\circ\sigma)\oplus (\mathrm{id}\otimes\pi_B): C^*(X)\to \cb\big(K_1\oplus (PH_E\otimes K_2)\big)$ has the UEP with respect to $X$ if and only if $\pi_A:C^*(E)\to \cb(K_1)$ has the UEP with respect to $E$ and $\pi_B:C^*(F)\to \cb(K_2)$ has the UEP with respect to $F$.
\end{thm}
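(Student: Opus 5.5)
The theorem follows by combining three facts already established in this subsection: the direct-sum characterization \eqref{hyperrigidity uep of X}, \Pref{hyperrigidity uep 1}, and \Pref{hyperrigidity uep 2}.

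\textbf{Step 1.} By Arveson's result \cite[Proposition 4.4]{A2} and the converse observed right after it, a direct sum of representations of $C^*(X)$ has the UEP with respect to $X$ if and only if each summand does. This is exactly the equivalence recorded in \eqref{hyperrigidity uep of X}. So the statement reduces to two separate checks: whether $\pi_A\circ\sigma$ has the UEP with respect to $X$, and whether $\mathrm{id}\otimes\pi_B$ does.

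\textbf{Step 2.} For the first summand, invoke \Pref{hyperrigidity uep 1}: $\pi_A\circ\sigma$ has the UEP with respect to $X$ if and only if $\pi_A$ has the UEP with respect to $E$.

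\textbf{Step 3.} For the second summand, invoke \Pref{hyperrigidity uep 2}: $\mathrm{id}\otimes\pi_B$ has the UEP with respect to $X$ if and only if $\pi_B$ has the UEP with respect to $F$.

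Chaining Steps 1–3 gives the stated equivalence.

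The only point needing care is that $K_1$ or $K_2$ may be the zero space. A zero summand simply drops out of the direct sum. In that case the corresponding UEP condition holds trivially: the only map into $\mathcal{B}(0)$ is the zero map, and we adopt the convention that it has the UEP. With this convention the equivalence remains correct even when one of the summands is absent.

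Since the analytic work, including the multiplicative-domain arguments, was already done in \Pref{hyperrigidity uep 1} and \Pref{hyperrigidity uep 2}, no genuine obstacle remains here.
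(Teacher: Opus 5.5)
Your proposal is correct and follows the same route as the paper, whose proof likewise just combines the direct-sum equivalence \eqref{hyperrigidity uep of X} with Propositions \ref{hyperrigidity uep 1} and \ref{hyperrigidity uep 2}. Your remark on zero summands $K_1=0$ or $K_2=0$ is harmless bookkeeping that the paper leaves implicit.
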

\begin{proof}
Combine Propositions \ref{hyperrigidity uep 1},\,\ref{hyperrigidity uep 2}, together with \eqref{hyperrigidity uep of X}.
\end{proof}

\subsection{Hyperrigidity and the Quantum double suspension}\label{Sec5.3}

\begin{dfn}
A unital operator system $S\subseteq \cB(H_S)$ is called hyperrigid in $C^*(S)$ if all unital representations of $C^*(S)$ have the UEP with respect to $S$.
\end{dfn}

An important consequence of \Tref{0th main result of hyperrigidity} is the following:

\begin{thm}\label{first main result of hyperigidity}
The Toeplitz type extension $X:=X(F;E,P)$ is hyperrigid in $C^*(X)$ if and only if $E$ is hyperrigid in $C^*(E)$ and $F$ is hyperrigid in $C^*(F)$.
\end{thm}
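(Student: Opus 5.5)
The plan is to deduce the statement directly from \Tref{0th main result of hyperrigidity}, using \Pref{hyperlemma1} to reduce an arbitrary representation of $C^*(X)$ to the split form. Throughout, set $A:=C^*(E)$ and $B:=C^*(F)$. By \Tref{mainresult 0}, $C^*(X)=\sk(PH_E)\otimes B+PAP\otimes 1_{H_F}$ is exactly the algebra $Y$ of \Sref{Sec5.1}, built from $(A,H_E,P)\in\mathrm{Op}_{C^*}$ (via the map $\Phi$) and $B$. Hence \Pref{hyperlemma1} applies to every unital representation of $C^*(X)$.

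For the direction ``$\Leftarrow$'', assume $E$ and $F$ are hyperrigid. Let $\pi:C^*(X)\to\cB(H)$ be a unital representation. By \Pref{hyperlemma1}, $\pi$ is unitarily equivalent to $(\pi_A\circ\sigma)\oplus(\mathrm{id}\otimes\pi_B)$ for some unital representations $\pi_A$ of $A$ and $\pi_B$ of $B$, where either summand may be zero. The UEP is invariant under unitary equivalence: if $U$ implements the equivalence, a UCP map agreeing with $\pi$ on $X$ conjugates to one agreeing with the model on $X$. By hyperrigidity, $\pi_A$ has the UEP with respect to $E$ and $\pi_B$ has the UEP with respect to $F$. \Tref{0th main result of hyperrigidity} then gives the UEP for $\pi$.

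The degenerate cases need a brief remark. If $K_1=0$ or $K_2=0$, the zero summand is not a unital representation, so I would treat the UEP of the remaining summand directly via \Pref{hyperrigidity uep 1} or \Pref{hyperrigidity uep 2}, together with the remark after Arveson's Proposition~4.4 on direct sums.

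For the direction ``$\Rightarrow$'', assume $X$ is hyperrigid. Given any unital representation $\pi_A:A\to\cB(K_1)$, the representation $\pi_A\circ\sigma$ of $C^*(X)$ is unital, so it has the UEP with respect to $X$, and \Pref{hyperrigidity uep 1} shows that $\pi_A$ has the UEP with respect to $E$. Similarly, given a unital $\pi_B:B\to\cB(K_2)$, the map $\mathrm{id}\otimes\pi_B$ is a unital representation of $C^*(X)$. It is well defined on $C^*(X)$ because it is the restriction of $\mathrm{id}\otimes\pi_B$ on $\cB(PH_E)\otimes_{\min}B$, and it sends $PaP\otimes 1_{H_F}$ to $PaP\otimes 1_{K_2}$. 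So it has the UEP with respect to $X$, and \Pref{hyperrigidity uep 2} gives the UEP of $\pi_B$ with respect to $F$.

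The only point requiring care is the identification of $C^*(X)$ with the model $Y$, so that \Pref{hyperlemma1} applies. This needs $(A,H_E,P)\in\mathrm{Op}_{C^*}$, which is guaranteed by the well-definedness of $\Phi$ and the second defining condition of $\mathrm{Op}_{\mathrm{OS}}$. I would also check the bookkeeping of unitary equivalence and of possibly zero summands. I do not expect any further obstacle.
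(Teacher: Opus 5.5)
Your proposal is correct and follows the paper's proof essentially verbatim: both directions reduce, via \Pref{hyperlemma1} and the unitary invariance of the UEP, to \Pref{hyperrigidity uep 1}, \Pref{hyperrigidity uep 2} and \Tref{0th main result of hyperrigidity}. Your extra remarks on zero summands and on identifying $C^*(X)$ with the model algebra $Y$ (via $(C^*(E),H_E,P)\in\mathrm{Op}_{C^*}$) are harmless bookkeeping that the paper leaves implicit.
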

\begin{proof}
Suppose that $X$ is hyperrigid in $C^*(X)$. We apply \Pref{hyperlemma1} together with the fact that UEP is invariant under unitary equivalence. Let $A:=C^*(E)$ and $B:=C^*(F)$, and $\pi_A:C^*(E)\to\cb(K_1)$ be any unital representation. Since $X$ is hyperrigid, $\pi_A\circ\sigma$ has the UEP with respect to $X$. By \Pref{hyperrigidity uep 1}, $\pi_A$ has the UEP with respect to $E$. Similarly, let $\pi_B:C^*(F)\to\mathcal{B}(K_2)$ be any unital representation. The hyperrigidity of $X$ implies that $\mathrm{id}\otimes\pi_B$ has the UEP with respect to $X$. \Pref{hyperrigidity uep 2} implies that $\pi_B$ has the UEP with respect to $F$. Thus, both $E$ and $F$ are hyperrigid in $C^*(E)$ and $C^*(F)$, respectively.

Conversely, suppose that $E$ and $F$ are hyperrigid in $A:=C^*(E)$ and $B:=C^*(F)$, respectively. Let $\pi:C^*(X)\to\mathcal{B}(K)$ be any unital representation. By  \Pref{hyperlemma1}, $\pi\simeq(\pi_A\circ\sigma)\oplus(\mathrm{id}\otimes\pi_B)$ for some unital representations $\pi_A$ and $\pi_B$ of $A$ and $B$, respectively. Since $E$ and $F$ are hyperrigid, $\pi_A$ has the UEP with respect to $E$ and $\pi_B$ the UEP with respect to $F$. \Tref{0th main result of hyperrigidity} implies that $(\pi_A\circ\sigma)\oplus(\mathrm{id}\otimes\pi_B)$ has the UEP with respect to $X$, and consequently $X$ is hyperrigid in $C^*(X)$.
\end{proof}

\begin{rmrk}\label{motivation}
\begin{enumerate}[$(i)$]
\item Let $I\trianglelefteq C^*(S)$ be an ideal and $q:C^*(S)\to C^*(S)/I$ be the quotient map. It is well-known that if $S$ is hyperrigid in $C^*(S)$, then $q(S)$ is hyperrigid in $C^*(S)/I$. This gives a quick proof of the fact that if $X=X(F;E,P)$ is hyperrigid in $C^*(X)$, then $E$ is also hyperrigid in $C^*(E)$.
\item If $E$ and $F$ are hyperrigid (so $C^*_{env}(E)=C^*(E)$), by \Tref{mainresult 0} and \Tref{first main result of hyperigidity}, we obtain $$C^*_{env}(X)=\sk(PH_E)\otimes C^*_{env}(F)+PC^*_{env}(E)P\otimes\mathbb{C}\cdot1_{H_F}.$$ In \Sref{Sec6.1}, we shall determine the $C^*$-envelope of general Toeplitz-type extensions.    
\end{enumerate}
\end{rmrk}

\Tref{first main result of hyperigidity} produces many examples of hyperrigid operator systems using the Toeplitz type iterative extension process (see \Dref{extension process}). We shall see in \Sref{Sec6.2} (in particular, \Yref{P} and discussions after it) that this mechanism can produce infinitely many pairwise non-isomorphic hyperrigid operator systems beginning with one pair of such. We need $C^*$-envelope of general Toeplitz type extensions to conclude this.

Now we introduce a special case of Toeplitz type extensions of operator systems. Recall Toeplitz type $C^*$-extensions from \Dref{Hawkins Zacharias defn}.
\begin{lmma}
Toeplitz type $C^*$-extensions can be iterated keeping the same quotient.
\end{lmma}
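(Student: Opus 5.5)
The plan is to show that if $0\to\sk\otimes B\to\mathscr{C}\to A\to 0$ is a Toeplitz type $C^*$-extension with respect to $(A,H_A,P)\in\mathrm{Op}_{C^*}$, then replacing $B$ by $\mathscr{C}$ and keeping $(A,H_A,P)$ gives another Toeplitz type $C^*$-extension of the same quotient $A$. Concretely, we realize $\mathscr{C}$ faithfully as $Y=\sk(PH_A)\otimes B+PAP\otimes 1_{H_B}\subseteq\mathcal{B}(PH_A\otimes H_B)$. Then we set
\[
Y^{(1)}:=\sk(PH_A)\otimes Y+PAP\otimes 1_{PH_A\otimes H_B}\subseteq\mathcal{B}\big(PH_A\otimes PH_A\otimes H_B\big).
\]
We want a short exact sequence $0\to\sk\otimes Y\to Y^{(1)}\to A\to 0$ satisfying the three conditions of \Dref{Hawkins Zacharias defn}. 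Induction then iterates this indefinitely.

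The key steps are as follows.

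First, $Y$ is a unital $C^*$-algebra acting faithfully on $PH_A\otimes H_B$. This holds because $Y$ is concretely defined there, by \Lref{helpful lemma}$(iii)$ applied to $E=A$ (which lies in $\mathrm{Op}_{\mathrm{OS}}$ since $\mathrm{Op}_{C^*}\subseteq\mathrm{Op}_{\mathrm{OS}}$).

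Second, we apply \Lref{helpful lemma} again with the same $(A,H_A,P)$ but with $D:=Y$ and $H_D:=PH_A\otimes H_B$. This gives three facts:
\begin{enumerate}[$(i)$]
\item $Y^{(1)}=\mathcal{C}(X_D)$ is a unital $C^*$-algebra;
\item $\big(\sk(PH_A)\otimes Y\big)\cap\big(PAP\otimes 1\big)=\{0\}$, which is condition $(iii)$;
\item $\sk(PH_A)\otimes Y$ is an essential ideal.
\end{enumerate}

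Third, \Lref{sigma map used in prop number} with the same $D$ supplies the surjective $\ast$-homomorphism $\widetilde\sigma:Y^{(1)}\to A$ with kernel $\sk(PH_A)\otimes Y$, which yields the exact sequence. Condition $(i)$ holds because $[P,a]\in\sk(H_A)$ is part of the hypothesis on $(A,H_A,P)$ and is unchanged. Condition $(ii)$ holds by construction of $Y^{(1)}$.

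Finally, we induct with $Y^{(j)}:=\sk(PH_A)\otimes Y^{(j-1)}+PAP\otimes 1$. Each step uses only the fixed data $(A,H_A,P)$ and an arbitrary unital $C^*$-algebra $D$ in \Lref{helpful lemma}. Nothing about $D$ beyond being a concrete unital $C^*$-algebra is needed, so the quotient stays $A$ at every stage.

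The main obstacle is making sure the abstract isomorphism $\mathscr{C}\cong Y$ in condition $(ii)$ of \Dref{Hawkins Zacharias defn} does not obstruct the iteration. The point to check is that the iterated algebra depends on $\mathscr{C}$ only through a faithful representation. Here I would note that $\sk(PH_A)\otimes\mathscr{C}$ is the spatial tensor product, which does not depend on the chosen faithful representation of $\mathscr{C}$. I would also check that the splitting $PAP\otimes 1$ only uses the unit of $\mathscr{C}$, so that $Y^{(1)}$ is well defined up to isomorphism.

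A secondary point is that $H_B$ may be non-separable if $B$ is not separable. As remarked after \Dref{Hawkins Zacharias defn}, separability is not needed, and the proof of \Lref{helpful lemma} only uses separability of $PH_A$ through the projections $f_N$.
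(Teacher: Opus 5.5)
Your proposal is correct and follows the same route as the paper: you realize $\mathscr{C}$ faithfully as $\sk(PH_A)\otimes B+PAP\otimes 1_{H_B}\subseteq\mathcal{B}(PH_A\otimes H_B)$, replace $B$ by $\mathscr{C}$ while keeping $(A,H_A,P)$ fixed, and iterate. The paper simply says the conditions of the Hawkins--Zacharias definition are ``trivially satisfied,'' whereas you check them explicitly via \Lref{helpful lemma} and \Lref{sigma map used in prop number} with $D$ taken to be the previous stage, and you also confirm that the construction does not depend on the chosen faithful representation; both of these are welcome additions of detail.
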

\begin{proof}
Consider a Toeplitz type $C^*$-extension $\mathscr{C}\cong\sk(PH_A)\otimes B+PAP\otimes\bbc$. Then $\mathscr{C}$ has a faithful representation in $\mathcal{B}(PH_A\otimes H_B)$. Replacing $B$ by $\mathscr{C}$, we obtain $\mathscr{C}_1=\sk(PH_A)\otimes\mathscr{C}+PAP\otimes\bbc$. All the conditions in \Dref{Hawkins Zacharias defn} are trivially satisfied. Continue this process.
\end{proof}

\begin{dfn}[\cite{HS}]
Let $A$ be a unital $C^*$-algebra. The quantum double suspension of $A$, denoted by $\Sigma^2 A$, is the $C^*$-algebra determined by the essential extension
$$
0
\longrightarrow
A\otimes\mathcal K
\longrightarrow
\Sigma^2 A
\overset{\rho}{\longrightarrow}
C(S^1)
\longrightarrow
0,
$$
whose Busby invariant
$$
\beta\colon C(S^1)
\longrightarrow
M(A\otimes\mathcal K)/(A\otimes\mathcal K)
$$
satisfies $\beta(z)=\mu(1_A\otimes S)$. Here $z\in C(S^1)$ is the canonical unitary generator, $S$ is the unilateral shift on $\ell^2(\mathbb N)$, and
$\mu\colon M(A\otimes\mathcal K)\to M(A\otimes\mathcal K)/(A\otimes\mathcal K)
$ is the canonical quotient map.
\end{dfn}

Concretely, let $A$ be a unital $C^*$-algebra and $\mathcal T$ be the Toeplitz algebra generated by the shift $T:e_k\mapsto e_{k-1},\,e_0\mapsto 0$. Put $p=1-T^*T$. The quantum double suspension of $A$ is the unital $C^*$-subalgebra of $A\otimes\mathcal T$ defined by
\[
\Sigma^2 A=C^*\bigl(A\otimes p,\;1\otimes T\bigr)
\subseteq A\otimes\mathcal T.
\]
For our purposes, we set the convention $\Sigma^0:=\mathrm{Id}$.

\begin{lmma}\label{similarity we require}
Let $A$ be a unital $C^*$-algebra. The quantum double suspension $\Sigma^2A$ is a Toeplitz type $C^*$-extension (\Dref{Hawkins Zacharias defn}) after the flip $A\otimes\mathcal K\cong\mathcal K\otimes A$. This identification is also an isomorphism of extensions, up to the automorphism $z\mapsto\bar{z}$ of $C(S^1)$.
\end{lmma}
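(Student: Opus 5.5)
Work concretely on $H_A\otimes\ell^2(\mathbb N)$, with $A\subseteq\mathcal B(H_A)$ faithful and $\mathcal T\subseteq\mathcal B(\ell^2(\mathbb N))$. The key observation is that $T$ acts as $e_k\mapsto e_{k-1}$, so $T^*$ is the forward unilateral shift. Under $\ell^2(\mathbb N)\cong H^2\subseteq L^2(S^1)$ we therefore have $T^*=P M_z P$ and $T=PM_{\bar z}P$, where $P$ is the Hardy projection. Since $(C(S^1),L^2(S^1),P)\in\mathrm{Op}_{C^*}$ (Example in Section 3), the target is the Toeplitz type algebra
\[
\mathscr C:=\sk(PL^2(S^1))\otimes A+PC(S^1)P\otimes 1_{H_A}\subseteq\mathcal B(\ell^2(\mathbb N)\otimes H_A).
\]
After the flip $\ell^2(\mathbb N)\otimes H_A\cong H_A\otimes\ell^2(\mathbb N)$, I will show that $\mathscr C$ coincides with $\Sigma^2A=C^*(A\otimes p,1\otimes T)$. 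Conditions $(i)$ and $(iii)$ of \Dref{Hawkins Zacharias defn} then follow from the $\mathrm{Op}_{C^*}$ axioms and \Lref{helpful lemma}$(ii)$, and $\mathscr C$ is a $C^*$-algebra by \Lref{helpful lemma}$(iii)$.

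\textbf{Inclusion $\Sigma^2A\subseteq\mathscr C$.} After the flip, $1\otimes T$ becomes $PM_{\bar z}P\otimes 1$, which lies in $\mathscr C$. The element $A\otimes p$ becomes $p\otimes A\subseteq\sk\otimes A$, since $p=|e_0\rangle\langle e_0|$ has rank one. Because $\mathscr C$ is a $C^*$-algebra, the inclusion follows.

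\textbf{Inclusion $\mathscr C\subseteq\Sigma^2A$.}
\begin{enumerate}[$(a)$]
\item Matrix units: $e_{ij}:=T^{*i}pT^j=|e_i\rangle\langle e_j|$. Hence $(1\otimes T^{*i})(a\otimes p)(1\otimes T^j)=a\otimes e_{ij}$ lies in $\Sigma^2A$.
\item Taking closed spans gives $A\otimes\sk(\ell^2(\mathbb N))\subseteq\Sigma^2A$.
\item Since $C^*(1\otimes T)=1\otimes\mathcal T$ and $\mathcal T=\sk+PC(S^1)P$ (the classical Toeplitz identity), we get $1\otimes PfP\in\Sigma^2A$ for every $f\in C(S^1)$.
\end{enumerate}
Hence $\mathscr C\subseteq\Sigma^2A$ after the flip, and the two algebras are equal.

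\textbf{Isomorphism of extensions.} The quotient map of \Lref{sigma map used in prop number} is $\widetilde\sigma:\mathscr C\to C(S^1)$, $k+PfP\otimes1\mapsto f$. It sends $1\otimes T=PM_{\bar z}P\otimes 1$ to $\bar z$. In the Hong--Szyma\'nski extension, $\rho$ sends $1\otimes T$ to $z$ (up to their convention), since the Busby invariant is $\mu(1\otimes S)$ with $S$ the shift. Both extensions have the same ideal $A\otimes\sk$, and on it the flip is the identity map. The quotients are therefore intertwined by the automorphism $z\mapsto\bar z$, i.e. $\rho=\alpha\circ\widetilde\sigma$ with $\alpha(f)(z)=f(\bar z)$. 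Checking this on the generators suffices, since both sides are $*$-homomorphisms agreeing on $A\otimes p$, where both vanish, and on $1\otimes T$.

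\textbf{Main obstacle.} The main obstacle is bookkeeping rather than depth. The shift direction conventions (backward $T$ versus forward $S$) must be matched correctly, and this is exactly why the automorphism $z\mapsto\bar z$ appears. The other point to check is that the Busby invariant of the constructed extension is indeed $\mu(1\otimes S)$ evaluated at the right generator. For this I would note that a lift of $z$ in $\mathscr C$ is $PM_zP\otimes1=T^*\otimes1$, the unilateral shift $S$, which identifies the Busby maps under $\alpha$.
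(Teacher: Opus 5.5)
Your proof of the first assertion follows the paper's route: the flip carries $A\otimes\sk+\bbc\otimes\mathcal T$ onto $\sk\otimes A+\mathcal T\otimes\bbc$. It is more complete than the paper's, which simply asserts $\Sigma^2A=A\otimes\sk(\ell^2(\bbn))+\bbc\otimes\mathcal T$. Your matrix units $T^{*i}pT^j=|e_i\rangle\langle e_j|$, the identity $\mathcal T=\sk+PC(S^1)P$, and \Lref{helpful lemma}$(iii)$ actually prove both inclusions.

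The extension-isomorphism part, however, contradicts itself. Since $A\otimes\sk$ is essential in $\Sigma^2A\subseteq M(A\otimes\sk)$, the Busby invariant is injective and satisfies $\beta(\rho(x))=\mu(x)$. So $\beta(z)=\mu(1\otimes S)$ means exactly $\rho(1\otimes S)=z$, and the twist is decided solely by which operator $S$ is in the concrete model.

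In your last paragraph you identify $S$ with $T^*=PM_zP$. Then $\rho(1\otimes T^*)=z$, so $\rho(1\otimes T)=\bar z$, which agrees with $\widetilde\sigma(T\otimes 1)=\bar z$. In that case the flip is an isomorphism of extensions on the nose, and it matches the two Busby maps via the identity, not via $\alpha$. This contradicts your earlier claim $\rho(1\otimes T)=z$ and the sentence ``which identifies the Busby maps under $\alpha$''.

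The twist $z\mapsto\bar z$ appears only under the convention $S=T$, the backward shift generating $\mathcal T$. That is what the paper's proof takes for granted when it asserts that the Hong--Szyma\'nski quotient map sends $1\otimes T$ to $z$. State that convention explicitly, and your generator check gives $\rho=\alpha\circ\widetilde\sigma\circ\mathrm{flip}$. With $S=T^*$ the same check gives $\rho=\widetilde\sigma\circ\mathrm{flip}$. As written, the proposal asserts both at once.
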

\begin{proof}
We have the Toeplitz type $C^*$-extension
\[
\mathscr{C}:=\sk(\ell^2(\bbn))\otimes A+PC(S^1)P\otimes\bbc=\sk(\ell^2(\bbn))\otimes A+\mathcal{T}\otimes\bbc
\]
where $\mathcal{T}$ is the Toeplitz algebra. On the other hand, $\Sigma^2A=A\otimes \sk(\ell^2(\bbn))+\bbc\otimes\mathcal{T}$. The flip $A\otimes\mathcal{T}\to\mathcal{T}\otimes A$ is a $\ast$-algebra isomorphism that maps $\Sigma^2A$ to $\mathscr{C}$. The flip sends the ideal $A\otimes\sk(\ell^2(\bbn))$ to $\sk(\ell^2(\bbn))\otimes A$. In both algebras, the quotient by this ideal is $C(S^1)$. The Hawkins--Zacharias quotient map sends $T\otimes 1$ to $\bar{z}$ and the Hong--Szyma\'nski quotient map sends $1\otimes T$ to $z$. Hence, the two quotient maps differ only by conjugation.
\end{proof}

Given any unital $C^*$-algebra $A$, we have the associated sequence $\{\Sigma^{2\ell}A\}_{\ell\in\bbn}$ of iterated quantum double suspensions of it. Applying quantum double suspension on the circle, we obtain the $C^*$-algebra $C(SU_q(2))\cong C(S_q^3),\,0<q<1,$ of continuous functions on the compact quantum group $SU_q(2)$, and iterating the process yields the $C^*$-algebras of continuous functions on odd-dimensional quantum spheres $C(S_q^{2\ell+1})$. Similarly, applying the quantum double suspension on the two-point space, we obtain the $C^*$-algebra $C(S_{q,c}^2)$ of continuous functions on the generic Podle\'s spheres $S_{q,c}^2,\,c\in(0,\infty],$ and iterating the process, we obtain the $C^*$-algebras of continuous functions on even-dimensional quantum spheres $C(S_q^{2\ell})$ which are quotients of odd-dimensional quantum spheres. We remind the readers that for each $\ell\in\bbn$, as $C^*$-algebras, $C(S_q^{2\ell+1})\cong C(S_{q^\prime}^{2\ell+1})$ and $C(S_q^{2\ell})\cong C(S_{q^\prime}^{2\ell})$ for $0<q,q^\prime<1$. See \cite{HS} for proofs of these facts.

\begin{rmrk}
In the context of this article, only the underlying $C^*$-algebras matter, and the odd- and even-dimensional quantum spheres, in each dimension, are topologically $q$-independent. There are other family of quantum spheres also due to Hong--Szyma\'nski \cite{HS1}, which we do not consider here.
\end{rmrk}

\begin{dfn}
We call a sequence $\{E_m\}_{m\in\bbn}$ of operator systems hyperrigid in the sequence of $C^*$-algebras $\{C^*(E_m)\}_{m\in\bbn}$ if each $E_m$ is hyperrigid in $C^*(E_m)$.
\end{dfn}

Recall the Fej\'er-Riesz operator system $C^*(\bbz)_{(n)}$, for any $n\ge 2$, from Example \ref{crucial example}. The $C^*$-algebra it generates is $C(S^1)$.
\begin{lmma}\label{hyperrigidity of Fejer-Riesz}
For any $n\ge 2$, we have $C^*(\bbz)_{(n)}$ is hyperrigid in $C(S^1)$.
\end{lmma}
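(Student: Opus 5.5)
We reduce the claim to the single generator $z\in C(S^1)$, namely $z(t)=t$. Since $n\ge 2$, the Fej\'er--Riesz system contains $\mathrm{span}\{1,z,\bar z\}$. We will show that every unital representation $\pi:C(S^1)\to\mathcal{B}(H)$ has the UEP with respect to $S_0:=\mathrm{span}\{1,z,\bar z\}$. This suffices: if $\Phi$ is UCP and agrees with $\pi$ on $C^*(\bbz)_{(n)}$, then it agrees with $\pi$ on $S_0\subseteq C^*(\bbz)_{(n)}$, so $\Phi=\pi$. In other words, hyperrigidity passes to larger operator systems that generate the same $C^*$-algebra, and $C^*(S_0)=C(S^1)$.

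The key step is a multiplicative domain argument using \Tref{Multiplicative domain theorem}. Let $\Phi:C(S^1)\to\mathcal{B}(H)$ be UCP with $\Phi(z)=\pi(z)=:U$, which is a unitary. The Kadison--Schwarz inequality gives
\[
\Phi(z)^*\Phi(z)\le\Phi(z^*z)=\Phi(1)=1=U^*U=\Phi(z)^*\Phi(z),
\]
so equality $\Phi(z^*z)=\Phi(z)^*\Phi(z)$ holds. The same computation gives $\Phi(zz^*)=\Phi(z)\Phi(z)^*$. By part $(iii)$ of \Tref{Multiplicative domain theorem}, $z\in\mathrm{MD}(\Phi)$. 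Because $\mathrm{MD}(\Phi)$ is a $C^*$-subalgebra on which $\Phi$ is a $*$-homomorphism, it contains $C^*(1,z)=C(S^1)$. Therefore $\Phi$ is a $*$-homomorphism agreeing with $\pi$ on $z$, and hence $\Phi=\pi$.

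No step is a serious obstacle. The only points to verify are that $z\in C^*(\bbz)_{(n)}$, which holds because $\widehat z(k)$ is nonzero only at $k=1<n$, and that the Kadison--Schwarz inequality for UCP maps (standard, \cite{P}) is available to give the equality case. Equivalently, one can cite Arveson's result that an operator system generated by a unitary together with $1$ is hyperrigid; the argument above is exactly that proof.
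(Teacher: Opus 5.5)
Your proof is correct and follows the same route as the paper. The paper cites the known fact that an operator system containing unitaries which generate the $C^*$-algebra is hyperrigid, and observes that $z\in C^*(\bbz)_{(n)}$; you prove that fact directly for $z$ via the multiplicative domain. (Minor point: the Kadison--Schwarz inequality is not needed, since $\Phi(z^*z)=\Phi(1)=1=U^*U=\Phi(z)^*\Phi(z)$ holds by direct computation.)
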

\begin{proof}
If an operator system contains unitaries generating its $C^*$-algebra, then it is hyperrigid \cite{HK}. Now $C^*(\bbz)_{(n)}$, for any $n\ge 2$, contains the standard unitary generator $z\in C(S^1)$.
\end{proof}

\begin{dfn}\label{qds for operator systems}
Let $F\subseteq\mathcal{B}(H_F)$ be a unital operator system. Consider the sequence $\{C^*(\bbz)_{(n)}\}_{n\ge 2}$ of Fej\'er--Riesz operator systems. The quantum double suspensions of $F$ is the sequence of operator system extensions $\{\Sigma^2F_{(n)}\}_{n\ge 2}$, where $$\Sigma^2F_{(n)}:=\sk(\ell^2(\bbn))\otimes_{\min}F+PC^*(\bbz)_{(n)}P\otimes\bbc.\, 1_{H_F},$$ with $P:L^2(S^1)\cong\ell^2(\bbz)\to\ell^2(\bbn)$ the Hardy projection.
\end{dfn}

For fixed $n\ge 2$, we call $\Sigma^2F_{(n)}$ the quantum double suspension of $F\subseteq\mathcal{B}(H_F)$. This is a special case of Toeplitz type extension of operator systems. By \Tref{mainresult 0}, we have $C^*(\Sigma^2F_{(n)})=\Sigma^2(C^*(F))$ for each $n\ge 2$ (see \Yref{Main theorem 1011} for the $C^*$-envelope version). Moreover, by \Tref{first main result of hyperigidity}, $F$ is hyperrigid in $C^*(F)$ if and only if $\Sigma^2F_{(n)}$ is hyperrigid in $\Sigma^2(C^*(F))$.

\begin{ppsn}\label{QDS hyperrigidity}
Let $F$ be a unital hyperrigid operator system in $A:=C^*(F)$. Then the sequence of unital operator systems $\{\Sigma^{2\ell-2}X_F\}_{\ell=1}^\infty$, where
\[
X_F:=\sk(\ell^2(\bbn))\otimes_{\min}F+PC^*(\bbz)_{(n)}P\otimes\bbc
\]
for fixed $n\ge 2$, is hyperrigid in the sequence $\{\Sigma^{2\ell}A\}_{\ell=1}^\infty$ of quantum double suspensions of $A$.
\end{ppsn}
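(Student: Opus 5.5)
The plan is induction on $\ell$, using \Tref{first main result of hyperigidity}, \Tref{mainresult 0} and \Lref{hyperrigidity of Fejer-Riesz}. Here $\Sigma^{2\ell-2}X_F$ is read as the $(\ell-1)$-fold iterate of the operator-system quantum double suspension of \Dref{qds for operator systems} applied to $X_F$. Equivalently, it is the Toeplitz type iterative extension $X^{\ell-1}(F;C^*(\bbz)_{(n)},P)$ of \Dref{extension process} with $E=C^*(\bbz)_{(n)}$ and $P$ the Hardy projection. We must show two things for each $\ell\ge 1$: that $C^*(\Sigma^{2\ell-2}X_F)=\Sigma^{2\ell}A$, and that $\Sigma^{2\ell-2}X_F$ is hyperrigid in it.

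\emph{Base case $\ell=1$.} By \Eref{crucial example}-type Example \ref{crucial example}(i), $(C^*(\bbz)_{(n)},\ell^2(\bbz),P)\in\mathrm{Op}_{\mathrm{OS}}$, so $X_F=X(F;C^*(\bbz)_{(n)},P)$ is a Toeplitz type extension. \Tref{mainresult 0} gives
\[
C^*(X_F)=\sk(\ell^2(\bbn))\otimes A+PC(S^1)P\otimes 1_{H_F}=\sk(\ell^2(\bbn))\otimes A+\mathcal T\otimes\bbc .
\]
By \Lref{similarity we require}, this is $\Sigma^2A$ up to the flip isomorphism. Hyperrigidity is preserved under $\ast$-isomorphisms of the generated $C^*$-algebras carrying one operator system onto the other, so the identification is harmless. \Lref{hyperrigidity of Fejer-Riesz} says $C^*(\bbz)_{(n)}$ is hyperrigid in $C(S^1)$, and $F$ is hyperrigid in $A$ by assumption. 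Hence \Tref{first main result of hyperigidity} shows $X_F$ is hyperrigid in $C^*(X_F)\cong\Sigma^2A$.

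\emph{Inductive step.} Suppose $G:=\Sigma^{2\ell-2}X_F\subseteq\mathcal B(\ell^2(\bbn)\otimes\cdots\otimes H_F)$ is hyperrigid in $C^*(G)\cong\Sigma^{2\ell}A$. Then $\Sigma^{2\ell}X_F=X(G;C^*(\bbz)_{(n)},P)$. Repeating the base-case argument with $F$ replaced by $G$, \Tref{mainresult 0} and \Lref{similarity we require} give
\[
C^*(\Sigma^{2\ell}X_F)=\sk(\ell^2(\bbn))\otimes C^*(G)+\mathcal T\otimes\bbc\cong\Sigma^2(C^*(G))\cong\Sigma^2(\Sigma^{2\ell}A)=\Sigma^{2\ell+2}A.
\]
\Tref{first main result of hyperigidity} then yields hyperrigidity of $\Sigma^{2\ell}X_F$.

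The only delicate point is the identification $\Sigma^2(C^*(G))\cong\Sigma^2(\Sigma^{2\ell}A)$. It needs that $\Sigma^2$ depends only on the isomorphism class of its input, which is clear from the concrete description $\Sigma^2B=C^*(B\otimes p,1\otimes T)\subseteq B\otimes\mathcal T$: an isomorphism $\alpha$ induces $\alpha\otimes\mathrm{id}$. One must also check that the isomorphisms used stay compatible with the concrete embedding of $G$. This is needed only in the sense that hyperrigidity is an intrinsic property of the pair $(G,C^*(G))$, so transporting along $\ast$-isomorphisms that fix the operator system is legitimate.
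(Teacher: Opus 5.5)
Your proposal is correct and follows the paper's argument. The base case combines \Lref{hyperrigidity of Fejer-Riesz}, \Tref{first main result of hyperigidity}, \Tref{mainresult 0} and \Lref{similarity we require} to get $X_F$ hyperrigid in $C^*(X_F)\cong\Sigma^2A$, and the higher stages iterate with $F$ replaced by the previous stage; your remarks on the isomorphism invariance of $\Sigma^2$ and of hyperrigidity just make explicit what the paper leaves implicit in ``continue the extension process.''
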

\begin{proof}
By \Lref{hyperrigidity of Fejer-Riesz} and \Tref{first main result of hyperigidity}, we have $X_F$ is hyperrigid in $C^*(X_F)$. By \Lref{helpful lemma} and \ref{sigma map used in prop number}, \Tref{mainresult 0} together with \Lref{similarity we require} implies that $C^*(X_F)$ is $\Sigma^2A$. This settles the case $\ell=1$. Now, continue the extension process in view of \Tref{first main result of hyperigidity}.
\end{proof}

\begin{crlre}\label{for fruitful remark}
The sequence of operator systems $\{\Sigma^{2\ell-2}(X^{\mathrm{odd}})\}_{\ell=1}^\infty$, where
\[
X^{\mathrm{odd}}:=\sk(\ell^2(\bbn))\otimes_{\min}C^*(\bbz)_{(n)}+PC^*(\bbz)_{(n)}P\otimes\bbc
\]
for each fixed $n\ge 2$, is hyperrigid in the sequence $\{C(S_q^{2\ell+1})\}_{\ell=1}^\infty$ of odd-dimensional quantum spheres, and the sequence of operator systems $\{\Sigma^{2\ell}(X^{\mathrm{even}})\}_{\ell=0}^\infty$, where
\[
X^{\mathrm{even}}:=\sk(\ell^2(\bbn))\otimes_{\min}\bbc^2+PC^*(\bbz)_{(n)}P\otimes\bbc
\]
for each fixed $n\ge 2$, is hyperrigid in the sequence $\{C(S_q^{2\ell+2})\}_{\ell=0}^\infty$ of even-dimensional quantum spheres.
\end{crlre}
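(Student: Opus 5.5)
The plan is to treat both statements as special cases of \Pref{QDS hyperrigidity}. For the odd series we take $F:=C^*(\bbz)_{(n)}$, and for the even series we take $F:=\bbc^2=\bbc\oplus\bbc$, the latter realized as the diagonal $2\times 2$ matrices in $\mathcal{B}(\bbc^2)$. The one hypothesis to check is that $F$ is hyperrigid in $C^*(F)$.

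For $F=C^*(\bbz)_{(n)}$ this is exactly \Lref{hyperrigidity of Fejer-Riesz}, with $C^*(F)=C(S^1)$. For $F=\bbc^2$ we have $F=C^*(F)$, because $F$ is already a $C^*$-algebra. Any operator system that equals its generated $C^*$-algebra is trivially hyperrigid: if a UCP map $\Phi$ agrees with a representation $\pi$ on $F$, then it agrees with $\pi$ on $C^*(F)$. Alternatively, $\bbc^2$ is spanned by the two unitaries $1$ and $(1,-1)$, so \cite{HK} applies.

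\Pref{QDS hyperrigidity} then gives that $\{\Sigma^{2\ell-2}X_F\}_{\ell\ge1}$ is hyperrigid in $\{\Sigma^{2\ell}C^*(F)\}_{\ell\ge1}$. This uses the identification $C^*(\Sigma^{2\ell-2}X_F)=\Sigma^{2\ell}C^*(F)$, which comes from iterating \Tref{mainresult 0} and \Lref{similarity we require}.

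It remains to identify these $C^*$-algebras with the quantum spheres, using the Hong--Szyma\'nski results \cite{HS} quoted above. On the odd side, $\Sigma^{2\ell}C(S^1)\cong C(S_q^{2\ell+1})$: the case $\ell=1$ is $C(SU_q(2))$, and inductively $\Sigma^2 C(S_q^{2\ell-1})=C(S_q^{2\ell+1})$. On the even side, $\Sigma^2(\bbc\oplus\bbc)=C(S^2_{q,c})\cong C(S_q^2)$, and iterating gives $\Sigma^{2\ell+2}(\bbc^2)\cong C(S_q^{2\ell+2})$. The indexing in the corollary, $\Sigma^{2\ell}(X^{\mathrm{even}})$ for $\ell\ge0$, then matches $C(S_q^{2\ell+2})$. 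Hyperrigidity is preserved under $\ast$-isomorphism of the generated $C^*$-algebra, since the UEP depends only on the pair (operator system, generated $C^*$-algebra) up to isomorphism. So the statement follows for every fixed $n\ge2$.

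The only delicate point, and the step I expect to need the most care, is bookkeeping rather than a new argument. One must check that the iterated operator-system construction $\Sigma^{2\ell-2}X_F$ generates the iterated Hong--Szyma\'nski quantum double suspension concretely. The Toeplitz-type presentation $\sk\otimes B+\mathcal{T}\otimes\bbc$ must agree at each stage with the one used in \cite{HS}; this agreement is only up to the flip and the automorphism $z\mapsto\bar z$ from \Lref{similarity we require}. Because each identification is a $\ast$-isomorphism, composing them through the induction is harmless, and the conclusion follows.
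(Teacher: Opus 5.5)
Your proposal is correct and follows essentially the same route as the paper. You apply \Pref{QDS hyperrigidity} with $F=C^*(\bbz)_{(n)}$ (hyperrigid by \Lref{hyperrigidity of Fejer-Riesz}) and with $F=\bbc^2$ (trivially hyperrigid), then identify the generated $C^*$-algebras with the quantum spheres via Hong--Szyma\'nski. Your index check and your remark that the identifications hold only up to $\ast$-isomorphism (the flip and $z\mapsto\bar z$) spell out bookkeeping the paper leaves implicit.
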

\begin{proof}
Fix any $n\ge 2$ and apply \Pref{QDS hyperrigidity} with $F=C^*(\bbz)_{(n)}$, which is hyperrigid in $C(S^1)$. Then, $X^{\mathrm{odd}}=\sk(\ell^2(\bbn))\otimes_{\min}C^*(\bbz)_{(n)}+PC^*(\bbz)_{(n)}P\otimes\bbc$, where $P$ is the Hardy projection, is hyperrigid in $C^*(X^{\mathrm{odd}})=\sk(\ell^2(\bbn))\otimes C(S^1)+PC(S^1)P\otimes\bbc$, which is isomorphic to $C(SU_q(2))$ by \Pref{QDS hyperrigidity}. This proves that $X^{\mathrm{odd}}$ is hyperrigid in $C(SU_q(2))$, and then continue the extension process in view of \Tref{first main result of hyperigidity}.

Similarly, for the second part, begin with $F=\bbc\oplus\bbc$, the $C^*$-algebra of a two-point space, which is trivially hyperrigid in $\bbc^2$, and continue the extension process.
\end{proof}

It would be interesting to explore the $K$-theoretic perspective \cite{S1,S2} of quantum double suspension of operator systems, or more generally, for Toeplitz type extension of operator systems.

\subsection{Arveson's hyperrigidity conjecture}\label{Sec5.4}

Arveson's hyperrigidity conjecture says that if all irreducible unital representations of $C^*(S)$ have the UEP with respect to $S$, then $S$ is hyperrigid in $C^*(S)$ \cite{A2,CL0,CL}. In this case, we say that $S$ satisfies the hyperrigidity conjecture. Obviously if $S$ is already hyperrigid in $C^*(S)$, then it satisfies the hyperrigidity conjecture.

The failure of the hyperrigidity conjecture is first established in \cite{BD}. Subsequently, several counterexamples and an obstruction are constructed \cite{B,BS,CL,S,T}. The main result of this subsection shows that every counterexample to the hyperrigidity conjecture gives rise to a sequence of counterexamples. Thus, applying our construction to each counterexample yields further family of counterexamples.

\begin{ppsn}\label{second main result of hyperrigidity}
If $E$ and $F$ satisfy the hyperrigidity conjecture, then $X:=X(F;E,P)$ also satisfies the hyperrigidity conjecture.
\end{ppsn}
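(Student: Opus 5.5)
We assume that every irreducible unital representation of $C^*(E)$ has the UEP with respect to $E$, and that every irreducible unital representation of $C^*(F)$ has the UEP with respect to $F$. The conclusion is then obtained from the hyperrigidity of $E$ and $F$. Under these assumptions, "$E$ and $F$ satisfy the hyperrigidity conjecture" means exactly that $E$ is hyperrigid in $C^*(E)$ and $F$ is hyperrigid in $C^*(F)$. \Tref{first main result of hyperigidity} then shows that $X=X(F;E,P)$ is hyperrigid in $C^*(X)$, and a hyperrigid operator system trivially satisfies the conjecture.

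That argument is cheap and uses the hypotheses only through their conclusions. I would therefore also give the formulation that carries over to the contrapositive used later for counterexamples, namely that the implication ``all irreducible representations have the UEP $\Rightarrow$ hyperrigid'' transfers from $E,F$ to $X$. The first step is to assume that every irreducible unital representation of $C^*(X)$ has the UEP with respect to $X$, and to identify these representations. By \Pref{hyperrigidity irreducible repn}, applied with $A=C^*(E)$ and $B=C^*(F)$ (legitimate since $C^*(X)$ has exactly the form $Y$ by \Tref{mainresult 0}), they are the maps $\pi_A\circ\sigma$ with $\pi_A\in\mathrm{Irred}(C^*(E))$, together with the maps $\mathrm{id}\otimes\pi_B$ with $\pi_B\in\mathrm{Irred}(C^*(F))$.

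The second step transfers the UEP to the factors. \Pref{hyperrigidity uep 1} shows that every irreducible $\pi_A$ has the UEP with respect to $E$, and \Pref{hyperrigidity uep 2} shows that every irreducible $\pi_B$ has the UEP with respect to $F$. Since $E$ and $F$ satisfy the conjecture, $E$ and $F$ are hyperrigid. \Tref{first main result of hyperigidity} then gives that $X$ is hyperrigid, which is precisely the statement that $X$ satisfies the conjecture.

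There is no serious obstacle here; the work was done in \Sref{Sec5.1} and \Sref{Sec5.2}. The only point needing care is the use of \Pref{hyperrigidity irreducible repn}: it requires that an irreducible $\pi_A\circ\sigma$ comes from an irreducible $\pi_A$, and that $\mathrm{id}\otimes\pi_B$ is irreducible exactly when $\pi_B$ is. This is what allows the hypothesis on irreducible representations of $C^*(X)$ to reach \emph{all} irreducible representations of $C^*(E)$ and of $C^*(F)$. Unitary invariance of the UEP handles the identification ``up to unitary equivalence''.
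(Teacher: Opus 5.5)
Your second argument is correct and is exactly the paper's proof: you assume every irreducible representation of $C^*(X)$ has the UEP, use \Pref{hyperrigidity irreducible repn} with Propositions \ref{hyperrigidity uep 1} and \ref{hyperrigidity uep 2} to push the UEP down to all irreducible representations of $C^*(E)$ and $C^*(F)$, invoke the conjecture for $E$ and $F$, and conclude with \Tref{first main result of hyperigidity}. Your opening ``cheap'' paragraph is not a proof, though: ``$E$ satisfies the conjecture'' is an implication, and that paragraph assumes its antecedent for $E$ and $F$ instead of deriving it from the antecedent for $X$, so you should drop it.
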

\begin{proof}
Suppose $E$ and $F$ satisfy the hyperrigidity conjecture. Now assume that all unital irreducible representations of $C^*(X)$ have the UEP with respect to $X$. From \Pref{hyperrigidity irreducible repn}, we get that, up to unitary equivalence, each representation in the following set has the UEP with respect to $X$: $$\big\{\pi_{C^*(E)}\circ\sigma: \pi_{C^*(E)}\in \mathrm{Irred}(C^*(E))\big\} \bigsqcup\,\big\{\mathrm{id}\otimes\pi_{C^*(F)}: \pi_{C^*(F)}\in \mathrm{Irred}(C^*(F))\big\}.$$ By \Pref{hyperrigidity uep 1} and \Pref{hyperrigidity uep 2}, we get that all unital irreducible representations of $C^*(E)$ and $C^*(F)$ have the UEP with respect to $E$ and $F$, respectively. By hypothesis, $E$ and $F$ are hyperrigid in $C^*(E)$ and $C^*(F)$, respectively. By \Tref{first main result of hyperigidity}, we get $X$ is hyperrigid in $C^*(X)$, and so $X$ satisfies the hyperrigidity conjecture.
\end{proof}

The following result provides a mechanism for generating new examples and counterexamples to Arveson's hyperrigidity conjecture from the existing ones.

\begin{ppsn}\label{main-BGP}
Let $(E,H_E,P)\in\mathrm{Op}_{\mathrm{OS}}\mbox{ and }(A,H_A,P)\in \mathrm{Op}_{C^*}$. Let $F\subseteq\cb(H_F)$ be a unital operator system and $B\subseteq \cb(H_B)$ be a unital $C^*$-algebra.
\begin{enumerate}[$(i)$]
\item Suppose $E$ is a hyperrigid operator system in $C^*(E)\subseteq\cb(H_E)$. Then $F$ satisfies the hyperrigidity conjecture if and only if $X(F;E,P)$ satisfies the hyperrigidity conjecture.
\item $F$ satisfies the hyperrigidity conjecture if and only if
$X(F;A,P)$ satisfies the hyperrigidity conjecture.
\item Suppose $F$ is a hyperrigid operator system in $C^*(F)\subseteq\cb(H_F)$. Then $E$ satisfies the hyperrigidity conjecture if and only if $X(F;E,P)$ satisfies the hyperrigidity conjecture.
\item $E$ satisfies the hyperrigidity conjecture if and only if $X(B;E,P)$ satisfies the hyperrigidity conjecture.
\end{enumerate}
\end{ppsn}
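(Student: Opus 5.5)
The plan is to prove $(i)$ and $(iii)$ directly and then obtain $(ii)$ and $(iv)$ as special cases. For $(ii)$ and $(iv)$, any unital $C^*$-algebra is trivially hyperrigid in itself: every UCP map that agrees with a representation on the whole algebra is that representation. Also $C^*(A)=A$, and $(A,H_A,P)\in\mathrm{Op}_{C^*}\subseteq\mathrm{Op}_{\mathrm{OS}}$. So $(ii)$ follows from $(i)$ with $E=A$, and $(iv)$ follows from $(iii)$ with $F=B$.

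For $(i)$, assume $E$ is hyperrigid in $C^*(E)$. Hyperrigid systems satisfy the conjecture, so if $F$ satisfies the conjecture, \Pref{second main result of hyperrigidity} shows that $X(F;E,P)$ does too.

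For the converse, suppose $X:=X(F;E,P)$ satisfies the conjecture. Assume every irreducible unital representation $\pi_B$ of $C^*(F)$ has the UEP with respect to $F$; we must show $F$ is hyperrigid. By \Pref{hyperrigidity irreducible repn}, the irreducible representations of $C^*(X)$ are, up to unitary equivalence, of two kinds:
\begin{itemize}
\item $\pi_A\circ\sigma$ with $\pi_A\in\mathrm{Irred}(C^*(E))$. These have the UEP with respect to $X$ by \Pref{hyperrigidity uep 1}, because $\pi_A$ has the UEP with respect to $E$ by hyperrigidity of $E$.
\item $\mathrm{id}\otimes\pi_B$ with $\pi_B\in\mathrm{Irred}(C^*(F))$. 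These have the UEP with respect to $X$ by \Pref{hyperrigidity uep 2} and our assumption on $\pi_B$.
\end{itemize}
The UEP is invariant under unitary equivalence, so every irreducible representation of $C^*(X)$ has the UEP. Since $X$ satisfies the conjecture, $X$ is hyperrigid in $C^*(X)$, and \Tref{first main result of hyperigidity} then gives that $F$ is hyperrigid in $C^*(F)$.

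The proof of $(iii)$ is symmetric, with the roles of $E$ and $F$ exchanged. If $E$ satisfies the conjecture and $F$ is hyperrigid, \Pref{second main result of hyperrigidity} applies directly. Conversely, suppose every irreducible representation of $C^*(E)$ has the UEP with respect to $E$. Then \Pref{hyperrigidity uep 1} handles the $\pi_A\circ\sigma$ family, and \Pref{hyperrigidity uep 2} together with hyperrigidity of $F$ handles the $\mathrm{id}\otimes\pi_B$ family. Hence every irreducible representation of $C^*(X)$ has the UEP, $X$ is hyperrigid by the conjecture for $X$, and \Tref{first main result of hyperigidity} yields that $E$ is hyperrigid.

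No step is a real obstacle, since the argument only assembles earlier results. The one point needing care is that \Pref{hyperrigidity irreducible repn} describes irreducible representations only up to unitary equivalence. We handle this by noting explicitly that the UEP is preserved under unitary equivalence, which the paper already uses in the proof of \Tref{first main result of hyperigidity}. We should also check that $(A,H_A,P)$ genuinely satisfies the $\mathrm{Op}_{\mathrm{OS}}$ hypotheses when $A$ is regarded as an operator system, which holds because $\mathrm{Op}_{C^*}\subseteq\mathrm{Op}_{\mathrm{OS}}$.
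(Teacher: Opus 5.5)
Your proposal is correct and follows the paper's own proof essentially step for step. The paper likewise reduces $(ii)$ and $(iv)$ to $(i)$ and $(iii)$, obtains the nontrivial direction of $(i)$ by combining Propositions~\ref{hyperrigidity irreducible repn}, \ref{hyperrigidity uep 1}, \ref{hyperrigidity uep 2} with \Tref{first main result of hyperigidity}, and takes the other direction from \Pref{second main result of hyperrigidity}; your remarks that a unital $C^*$-algebra is trivially hyperrigid in itself and that the UEP is invariant under unitary equivalence just make explicit what the paper calls ``immediate''.
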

\begin{proof}
$(ii)$ is immediate from $(i)$, $(iv)$ is immediate from $(iii)$, and the proofs of $(i)$ and $(iii)$ are similar, so we only prove $(i)$. Suppose $X(F;E,P)$ satisfies the hyperrigidity conjecture, and assume that all irreducible unital representations of $C^*(F)$ have the UEP with respect to $F$. Since $E$ is hyperrigid, every unital representation of $C^*(E)$ has the UEP with respect to $E$, in particular,  every irreducible unital representation of $C^*(E)$ has the UEP with respect to $E$. By Propositions \ref{hyperrigidity irreducible repn}, \ref{hyperrigidity uep 1}, \ref{hyperrigidity uep 2}, every irreducible unital representation of $C^*(X)$ has the UEP with respect to $X(F;E,P)$. Since $X(F;E,P)$ satisfies the hyperrigidity conjecture, it follows that $X(F;E,P)$ is hyperrigid in $C^*(X)$. By \Tref{first main result of hyperigidity}, $F$ is hyperrigid in $C^*(F)$, so $F$ satisfies the hyperrigidity conjecture. The converse direction follows from \Pref{second main result of hyperrigidity}.
\end{proof}

\begin{ppsn}\label{1111}
Every counterexample to the hyperrigidity conjecture gives rise to a sequence of counterexamples.
\end{ppsn}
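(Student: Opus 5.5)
Fix a counterexample $F\subseteq\mathcal{B}(H_F)$: every irreducible unital representation of $C^*(F)$ has the UEP with respect to $F$, but $F$ is not hyperrigid in $C^*(F)$. The plan is to iterate the Toeplitz type extension with a fixed hyperrigid first argument and apply \Pref{main-BGP}$(i)$ at each step.

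First I choose a hyperrigid element $(E,H_E,P)\in\mathrm{Op}_{\mathrm{OS}}$. The natural choice is the Fej\'er--Riesz operator system $E=C^*(\bbz)_{(n)}$ for some fixed $n\ge 2$, with $P$ the Hardy projection. By Example \ref{crucial example} this lies in $\mathrm{Op}_{\mathrm{OS}}$, and by \Lref{hyperrigidity of Fejer-Riesz} it is hyperrigid in $C(S^1)$. With this choice the resulting systems are exactly the quantum double suspensions of \Dref{qds for operator systems}. Alternatively, any $(A,H_A,P)\in\mathrm{Op}_{C^*}$ works via \Pref{main-BGP}$(ii)$, since a $C^*$-algebra is trivially hyperrigid in itself.

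Next I form the iterated systems $X^j:=X^j(F;E,P)$ of \Dref{extension process}, with $X^{-1}:=F$. I prove by induction on $j\ge 0$ that each $X^j$ is a counterexample.
\begin{enumerate}[$(a)$]
\item \emph{$X^j$ satisfies the hypothesis of the conjecture.} By \Pref{hyperrigidity irreducible repn}, every irreducible representation of $C^*(X^j)$ is either $\pi_A\circ\sigma$ with $\pi_A$ irreducible on $C^*(E)$, or $\mathrm{id}\otimes\pi_B$ with $\pi_B$ irreducible on $C^*(X^{j-1})$. The first kind has the UEP by \Pref{hyperrigidity uep 1}, because $E$ is hyperrigid. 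The second kind has the UEP by \Pref{hyperrigidity uep 2} together with the inductive hypothesis on $X^{j-1}$.
\item \emph{$X^j$ is not hyperrigid.} By \Tref{first main result of hyperigidity}, $X^j$ is hyperrigid if and only if both $E$ and $X^{j-1}$ are, and $X^{j-1}$ is not.
\end{enumerate}
Equivalently, \Pref{main-BGP}$(i)$ applied with $X^{j-1}$ in place of $F$ gives that $X^{j-1}$ fails the conjecture if and only if $X^j$ does.

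One legitimacy check is needed at each step: $X^{j-1}$ must be a unital operator system on a Hilbert space, so that it can serve as the second argument. This holds because $X^{j-1}\subseteq\mathcal{B}(PH_E\otimes H_{X^{j-2}})$ with unit $P\otimes 1$. If separability is desired, it is preserved by the construction.

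The main point requiring care is that the sequence should consist of genuinely \emph{new} counterexamples rather than repetitions. For the statement as posed, producing a sequence of counterexamples indexed by $j$ suffices. They are distinct as concrete operator systems, because they live on different Hilbert spaces and $C^*(X^j)$ is an iterated Toeplitz type extension. Pairwise non-isomorphism is a harder matter that I would not attempt here; it requires the $C^*$-envelope and $K$-theory analysis the paper promises later.

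I also note that $F$ may be finite- or infinite-dimensional and commutative or not. Nothing in the argument uses any structure of $F$ beyond its being a unital operator system, so the proof covers all cases uniformly.
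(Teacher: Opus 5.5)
Your proposal is correct and follows the paper's proof: fix a hyperrigid $(E,H_E,P)\in\mathrm{Op}_{\mathrm{OS}}$, iterate $F\mapsto X(F;E,P)$, and apply \Pref{main-BGP}$(i)$ at each step. Your steps $(a)$ and $(b)$ simply unpack that proposition through \Pref{hyperrigidity irreducible repn}, Propositions~\ref{hyperrigidity uep 1} and~\ref{hyperrigidity uep 2}, and \Tref{first main result of hyperigidity}, which is exactly how the paper establishes it.
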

\begin{proof}
Choose any hyperrigid operator system $E$, including degenerate ones, in $\mathrm{Op}_{\mathrm{OS}}$. Let $F\subseteq\mathcal{B}(H_F)$ be an operator system which gives a counterexample to the hyperrigidity conjecture. By part $(i)$ of \Pref{main-BGP}, $X(F;E,P)$ ($P=\mathrm{id}$ is also allowed) gives a counterexample to the hyperrigidity conjecture. Now, keep $E$ fixed and replace $F$ by $X(F;E,P)$ to generate the next counterexample, again applying part $(i)$ of the same proposition. Iterating, we obtain a sequence of counterexamples from one counterexample $F$.
\end{proof}

A particular instance of the above is the following.
\begin{crlre}\label{11111}
Suppose $F\subseteq\mathcal{B}(H_F)$ is a counterexample to the hyperrigidity conjecture. Let $A$ be a unital simple infinite-dimensional $C^*$-algebra with a faithful representation $\pi:A\to\mathcal{B}(H)$ on a separable Hilbert space $H$. All of the following
\[
X_0:=X(F;A,\mathrm{id}_H)=\sk(H)\otimes F+A\otimes\bbc.\,1_{H_F}\,,\qquad X_j:=X(X_{j-1};A,\mathrm{id}_H),
\]
for all $j\ge 1$, are counterexamples to the hyperrigidity conjecture.
\end{crlre}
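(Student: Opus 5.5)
The plan is to realize the corollary as the special case of \Pref{1111} in which $E$ is the $C^*$-algebra $A$ itself, with $P=\mathrm{id}_H$. There are three things to check: that $(\pi(A),H,\mathrm{id}_H)$ is admissible, that $A$ is hyperrigid in itself, and that part $(i)$ (equivalently part $(ii)$) of \Pref{main-BGP} can be iterated.

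First, I will verify that $(\pi(A),H,\mathrm{id}_H)$ is a degenerate element of $\mathrm{Op}_{C^*}\subseteq\mathrm{Op}_{\mathrm{OS}}$. The commutator condition is vacuous because $P=\mathrm{id}_H$. The second condition requires $\pi(A)\cap\sk(H)=\{0\}$. To get it, note that $\pi(A)\cap\sk(H)$ is a closed two-sided ideal of $\pi(A)\cong A$. By simplicity it is either $\{0\}$ or all of $\pi(A)$. In the latter case $1_H\in\sk(H)$, which forces $H$ to be finite-dimensional. Then $\pi(A)\subseteq\mathcal{B}(H)$ would be finite-dimensional, contradicting that $\pi$ is faithful and $A$ is infinite-dimensional. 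Hence the intersection is $\{0\}$. This is exactly \Eref{} part $(iii)$ of the example preceding \Dref{extension dfn}'s section, and the only step I expect to need any argument at all. Also note that $X_0=\sk(H)\otimes_{\min}F+\pi(A)\otimes\bbc\cdot1_{H_F}$ agrees with \Dref{extension dfn} because $P\pi(A)P=\pi(A)$.

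Second, the operator system $\pi(A)$ equals $C^*(\pi(A))$. Every representation of a $C^*$-algebra trivially has the UEP with respect to the whole algebra, since a UCP map agreeing with it on the algebra is that map. So $\pi(A)$ is hyperrigid in itself, and part $(ii)$ of \Pref{main-BGP} applies directly. Applied to $F$, it says that $F$ satisfies the hyperrigidity conjecture if and only if $X_0$ does. Since $F$ fails the conjecture, so does $X_0$.

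Third, I will run an induction on $j$. If $X_{j-1}$ is a counterexample, then $X_{j-1}$ is a unital operator system in $\mathcal{B}(H\otimes H_{X_{j-2}})$. Applying part $(ii)$ of \Pref{main-BGP} with $F$ replaced by $X_{j-1}$ and the same triple $(\pi(A),H,\mathrm{id}_H)$ shows that $X_j=X(X_{j-1};A,\mathrm{id}_H)$ fails the conjecture. This is precisely the iteration in the proof of \Pref{1111}, so no new obstacle arises beyond the admissibility check in the first step.
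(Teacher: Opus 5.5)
Your proposal is correct and follows the paper's route: the paper states this corollary, without further proof, as the particular instance of Proposition~\ref{1111} with $E=\pi(A)$ and $P=\mathrm{id}_H$. You make explicit the two checks the paper leaves implicit, namely that $\pi(A)\cap\sk(H)=\{0\}$ by simplicity (this is part $(iii)$ of the first example in Section~\ref{Sec3}, so the dangling cross-reference in your first step should point there) and that a $C^*$-algebra is trivially hyperrigid in itself, after which part $(ii)$ of Proposition~\ref{main-BGP} can be applied iteratively.
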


Thus, we obtain a large class of concrete counterexamples to the hyperrigidity conjecture. In \Sref{Sec6.2}, we shall determine a $K$-theoretic condition, using the $C^*$-envelope of general Toeplitz type extensions of operator systems, under which \Pref{1111} gives infinitely many pairwise non-isomorphic counterexamples arising from one counterexample.
\smallskip

\noindent\textbf{The commutative setting:} The hyperrigidity conjecture is very recently disproved in the commutative setting as well \cite{BS}.

Let $F$ be a function system in a commutative $C^*$-algebra, and suppose that $(E,H_E,P)\in\mathrm{Op}_{\mathrm{OS}}$ is such that $E$ is hyperrigid in $C^*(E)\subseteq\cb(H_E)$. Recall from part $(i)$ of \Pref{main-BGP} that $F$ satisfies the hyperrigidity conjecture if and only if the extension $X(F;E,P)$ does. By construction, $X(F;E,P)$, as well as all its iterated extensions, generate noncommutative $C^*$-algebras. Thus, the validity of the conjecture for function system $F$ depends crucially on its validity for the family of operator systems arising from arbitrary hyperrigid systems in $\mathrm{Op}_{\mathrm{OS}}$ through the iterated extension processes. This family contains infinitely many operator systems whose generating $C^*$-algebras are noncommutative, and if any one of these operator systems fails to satisfy the conjecture, then the conjecture fails for $F$ as well.

This provides a procedure to generate more counterexamples in the commutative setting. Indeed, by identifying a known counterexample within the family of operator systems arising through the iterated extension process, one can deduce that the corresponding function system $F$ also fails to satisfy the conjecture.


\section{\v{S}ilov Ideal and \texorpdfstring{$C^*$}{C*}-envelope for Toeplitz type Extension}\label{Sec6}

Motivated by part $(ii)$ in Remark \ref{motivation}, as well as \Pref{1111}, in this section we obtain the $C^*$-envelope of a general Toeplitz type extension $X:=X(F;E,P)$ (recall \Dref{extension dfn}). Then we shall revisit the hyperrigidity and the hyperrigidity conjecture from Sections \ref{Sec5.3}, \ref{Sec5.4}.

\subsection{\texorpdfstring{$C^*$}{cstar}-envelope for general Toeplitz type Extension}\label{Sec6.1}

Denote the $C^*$-envelope of $X$ by $(C^*_{env}(X),\lambda)$. Let $(C^*_{env}(F),i_{F})$ be the $C^*$-envelope of $F$. Since $(C^*(F),i)$ is a $C^*$-extension of $F$, by the universal property of $C^*$-envelope, there exists a surjective $*$-homomorphism $\pi_{F}:C^*(F)\to C^*_{env}(F)$ such that $\pi_F\circ i=i_F$, and $\ker(\pi_F)$ is the \v{S}ilov ideal of $C^*(F)$ relative to $F$.

Now consider a general situation. Let $A\subseteq\mathcal{B}(H_A)$ and $B\subseteq\mathcal{B}(H_B)$ be two unital $C^*$-algebras, and $\pi: A\to B$ be a surjective $\ast$-homomorphism. We have the following short exact sequence
\[
0\longrightarrow\ker(\pi)\xrightarrow{\,\,\,i\,\,\,}A\xrightarrow{\,\,\,\pi\,\,\,}B\longrightarrow 0.
\]
Since $\sk(PH_E)$ is nuclear, this induces the following short exact sequence:
\begin{IEEEeqnarray}{lCl}\label{exactness}
\hspace*{-1cm}0\longrightarrow\sk(PH_E)\otimes\ker(\pi)\xrightarrow{\mathrm{id}_{\sk(PH_E)}\otimes\,i}\sk(PH_E)\otimes A\xrightarrow{q_\sk:=\mathrm{id}_{\sk(PH_E)}\otimes\,\pi}\sk(PH_E)\otimes B\longrightarrow 0.
\end{IEEEeqnarray}
Consider the following $\ast$-homomorphism
\[
q:= \mathrm{id}_{\mathcal{B}(PH_E)}\otimes\,\pi: \mathcal{B}(PH_E)\otimes_{\min} A\longrightarrow \mathcal{B}(PH_E)\otimes_{\min} B
\]
and recall from \Lref{helpful lemma}, $\mathcal{C}(X_A):=\sk(PH_E)\otimes A+ PC^*(E)P\otimes 1_{H_A}$, and similarly $\mathcal{C}(X_B)$. Define 
\[
q_\pi: \mathcal{C}(X_A)\longrightarrow \mathcal{C}(X_B),\quad q_\pi(k+PcP\otimes 1_{H_A}):= q_\sk(k)+PcP\otimes1_{H_B}
\]
for $k\in\sk(PH_E)\otimes A$ and $c\in C^*(E)$, where $q_{\sk}$ is as defined in \Eref{exactness}.

\begin{lmma}\label{useful lemma2}
We have $q_\pi=\left.q\right|_{\mathcal{C}(X_A)}$, and $q_\pi$ is a surjective $\ast$-homomorphism with
\[
\ker(q_\pi)=\ker(q_{\sk})=\sk(PH_E)\otimes\ker(\pi).
\]
\end{lmma}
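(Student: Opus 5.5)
The plan is to show first that $q_\pi$ agrees with $q$ on $\mathcal{C}(X_A)$. The other claims then follow largely formally from \Lref{helpful lemma} and the exactness of \Eref{exactness}.

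\emph{Step 1: $q_\pi = q|_{\mathcal{C}(X_A)}$.} By \Lref{helpful lemma}(ii) every element of $\mathcal{C}(X_A)$ has a unique decomposition $k+PcP\otimes 1_{H_A}$, so $q_\pi$ is well defined. On elementary tensors $T\otimes a$ with $T\in\sk(PH_E)$ we have $q(T\otimes a)=T\otimes\pi(a)=q_\sk(T\otimes a)$. Since $q$ and $q_\sk$ are continuous (they are $\ast$-homomorphisms), this gives $q|_{\sk(PH_E)\otimes A}=q_\sk$. Because $\pi$ is unital, $q(PcP\otimes 1_{H_A})=PcP\otimes\pi(1)=PcP\otimes 1_{H_B}$. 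Linearity then yields $q_\pi=q|_{\mathcal{C}(X_A)}$. One small point: the unit of $A$ is $1_{H_A}$, and $\pi(1_{H_A})=1_{H_B}$ holds because $\pi$ is a surjective homomorphism onto the unital algebra $B$, so $\pi(1)$ is the unit of $B$.

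\emph{Step 2: $q_\pi$ is a $\ast$-homomorphism.} By \Lref{helpful lemma}(iii), $\mathcal{C}(X_A)$ is a $C^*$-subalgebra of $\mathcal{B}(PH_E)\otimes_{\min}A$. The restriction of the $\ast$-homomorphism $q$ to it is therefore a $\ast$-homomorphism, and by Step 1 its image lies in $\mathcal{C}(X_B)$.

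\emph{Step 3: surjectivity.} $q_\sk$ is surjective by \Eref{exactness}, and the summand $PC^*(E)P\otimes 1_{H_B}$ is hit directly. Hence $q_\pi$ is onto $\mathcal{C}(X_B)$.

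\emph{Step 4: the kernel.} Suppose $q_\pi(k+PcP\otimes 1_{H_A})=0$. Then $q_\sk(k)+PcP\otimes 1_{H_B}=0$ in $\mathcal{C}(X_B)$, and \Lref{helpful lemma}(ii) applied with $D=B$ forces $PcP\otimes 1_{H_B}=0$ and $q_\sk(k)=0$. By \Lref{helpful lemma}(i), $PcP\otimes 1_{H_B}=0$ gives $c=0$. Therefore $\ker(q_\pi)=\ker(q_\sk)$, which equals $\sk(PH_E)\otimes\ker(\pi)$ by exactness of \Eref{exactness}.

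The only genuinely nontrivial input is that exactness, which rests on the nuclearity of $\sk(PH_E)$ and is already taken as given. The step that needs the most care is Step 1: the identification $q|_{\sk(PH_E)\otimes A}=q_\sk$ goes through density of the algebraic tensor product and continuity, and the unitality of $\pi$ must be checked as above.
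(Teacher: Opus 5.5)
Your proof is correct and follows essentially the same route as the paper. You identify $q_\pi$ with the restriction of $q=\mathrm{id}_{\mathcal{B}(PH_E)}\otimes\pi$, so it is a $\ast$-homomorphism, and you get surjectivity from that of $q_\sk$. You then compute the kernel using three inputs: $(\sk(PH_E)\otimes B)\cap(PC^*(E)P\otimes 1_{H_B})=\{0\}$ from \Lref{helpful lemma}(ii) with $D=B$, injectivity of $c\mapsto PcP$, and exactness of \Eref{exactness}. Your extra care in Step 1 (density of the algebraic tensor product and unitality of $\pi$) simply makes explicit what the paper's one-line computation $q_\sk(k)+PcP\otimes 1_{H_B}=q(k)+q(PcP\otimes 1_{H_A})$ takes for granted.
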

\begin{proof}
The surjectivity of $q_\pi$ follows from that of $q_\sk$. For $k+PcP\otimes 1_{H_A}\in \mathcal{C}(X_A)$, we have
\begin{IEEEeqnarray*}{lCl}
q_\pi(k+PcP\otimes 1_{H_A}) &=& q_\sk(k)+PcP\otimes1_{H_B}\\
&=& q(k)+q(PcP\otimes 1_{H_A})\\
&=& q(k+PcP\otimes 1_{H_A}),
\end{IEEEeqnarray*}
which implies that $q_\pi= \left.q\right|_{\mathcal{C}(X_A)}$, and consequently $q_\pi$ is a $\ast$-homomorphism.

Now, let $-k+PcP\otimes 1_{H_A}\in \ker(q_\pi)$. Then $q_\sk(-k)+PcP\otimes1_{H_B}=0$. Thus, $q_\sk(k)=PcP\otimes1_{H_B}\in (\sk(PH_E)\otimes B) \cap (PC^*(E)P\otimes 1_{H_B})=\{0\}$, where the last equality follows by taking $D=B$ in \Lref{helpful lemma}, part $(ii)$. So $PcP\otimes 1_{H_B}=0$, and by the definition of $\mathrm{Op}_{\mathrm{OS}}$, we obtain $c=0$. This proves $\ker(q_\pi)\subseteq \ker(q_\sk)$. The reverse containment is obvious, and hence $\ker(q_\pi)=\ker(q_\sk)$. By the exactness of \Eref{exactness}, we have $\ker(q_\sk)=\sk(PH_E)\otimes\ker(\pi)$, which completes the proof.
\end{proof}

Putting $A=C^*(F)$, $B=C^*_{env}(F)$, and $\pi=\pi_F:C^*(F)\to C^*_{env}(F)$ in Lemma \ref{useful lemma2}, we obtain the following.
\begin{crlre}\label{q_pi_F}
There is a surjective $\ast$-homomorphism 
\begin{IEEEeqnarray*}{lCl}
q_{\pi_F} : C^*(X)=\sk(PH_E)\otimes C^*(F) + PC^*(E)P\otimes \bbc.1_{H_{F}} &\longrightarrow& \sk(PH_E)\otimes C^*_{env}(F) +\nonumber\\
& & PC^*(E)P\otimes \bbc.1_{H_{C^*_{env}(F)}}
\end{IEEEeqnarray*}
with kernel $\ker({q_{\pi_F}})=\sk(PH_E)\otimes\ker({\pi}_{F})$.
\end{crlre}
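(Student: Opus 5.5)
This is a direct specialization of \Lref{useful lemma2}, so the plan is to verify its hypotheses in this setting and then read off the conclusion.

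\emph{Setting up the instance.} Take $A:=C^*(F)\subseteq\mathcal{B}(H_F)$. For $B:=C^*_{env}(F)$ we need a concrete realization $B\subseteq\mathcal{B}(H_B)$. Since $C^*_{env}(F)$ is an abstract unital $C^*$-algebra, we fix a faithful unital representation of it, for instance via the GNS construction, on a Hilbert space $H_{C^*_{env}(F)}$. The map $\pi:=\pi_F:C^*(F)\to C^*_{env}(F)$ is the surjective $\ast$-homomorphism supplied by the universal property of the $C^*$-envelope, with $\pi_F\circ i=i_F$. Lemma~\ref{useful lemma2} only asks that $A,B$ be unital concrete $C^*$-algebras and $\pi$ a surjective $\ast$-homomorphism, so these hypotheses hold. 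The lemma also needs $\pi$ to be unital, so that $PcP\otimes 1_{H_A}\mapsto PcP\otimes 1_{H_B}$ in the definition of $q_\pi$ is compatible with $q=\mathrm{id}\otimes\pi$. This holds because $\pi_F(1)=\pi_F(i(1))=i_F(1)=1$.

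\emph{Identifying the domain and codomain.} By \Tref{mainresult 0}, the domain $\mathcal{C}(X_{C^*(F)})=\sk(PH_E)\otimes C^*(F)+PC^*(E)P\otimes 1_{H_F}$ is exactly $C^*(X)$. The codomain $\mathcal{C}(X_{C^*_{env}(F)})$ is the algebra displayed in the statement. By \Lref{helpful lemma}(iii), applied with $D=C^*_{env}(F)$, this codomain is a unital $C^*$-algebra.

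\emph{Conclusion.} Lemma~\ref{useful lemma2} then says $q_{\pi_F}=(\mathrm{id}_{\mathcal{B}(PH_E)}\otimes\pi_F)|_{C^*(X)}$ is a surjective $\ast$-homomorphism with $\ker(q_{\pi_F})=\sk(PH_E)\otimes\ker(\pi_F)$. The exactness in \Eref{exactness} that the lemma uses comes from nuclearity of $\sk(PH_E)$.

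The only step that needs any care is the unitality of $\pi_F$ together with the choice of a concrete faithful representation of $C^*_{env}(F)$. Everything else is a direct substitution.
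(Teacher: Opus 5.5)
Your proposal is correct and matches the paper's route: the paper obtains this corollary by specializing Lemma~\ref{useful lemma2} to $A=C^*(F)$, $B=C^*_{env}(F)$, $\pi=\pi_F$, with the domain identified as $C^*(X)$ via Theorem~\ref{mainresult 0}. Your additional checks are points the paper leaves implicit: the unitality of $\pi_F$ (from $\pi_F\circ i=i_F$) and the choice of a faithful representation of $C^*_{env}(F)$.
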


\begin{lmma}\label{p1}
$\sk(PH_E)\otimes\ker(\pi_F)$ is a boundary ideal of $C^*(X)$ relative to $X$.
\end{lmma}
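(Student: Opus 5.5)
By \Yref{q_pi_F}, $\sk(PH_E)\otimes\ker(\pi_F)$ is exactly the kernel of the surjective $\ast$-homomorphism $q_{\pi_F}$. So the statement reduces to showing that $q_{\pi_F}$ is a complete isometry on $X\subseteq C^*(X)$. The plan is to do this one matrix level at a time, by splitting a matrix over $X$ into its compact part and its $E$-part and comparing norms.

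Fix $m$ and $x=(x_{ij})\in M_m(X)$, and write $x_{ij}=k_{ij}+Pe_{ij}P\otimes 1_{H_F}$ with $k_{ij}\in\sk(PH_E)\otimes_{\min}F$ and $e_{ij}\in E$. Since $q_{\pi_F}$ is a $\ast$-homomorphism, it is completely contractive. It therefore suffices to prove the reverse inequality
\[
\|(x_{ij})\|\le \|(q_{\pi_F}(x_{ij}))\|.
\]

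To get this, note that $q_{\pi_F}=\mathrm{id}_{\mathcal{B}(PH_E)}\otimes\pi_F$ restricted to $C^*(X)$, by \Lref{useful lemma2}. Since $\pi_F\circ i=i_F$ and $i_F$ is a unital complete order injection, the restriction $\pi_F|_F$ is completely isometric. By \Tref{P2}, the map
\[
\mathrm{id}_{\mathcal{B}(PH_E)}\otimes_{\min}\pi_F|_F:\ \mathcal{B}(PH_E)\otimes_{\min}F\longrightarrow \mathcal{B}(PH_E)\otimes_{\min}C^*_{env}(F)
\]
is a unital complete order injection, and hence completely isometric by \Lref{P1}$(iii)$. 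This map is also isometric on every matrix level $M_m$, because $M_m(\mathcal{B}(PH_E)\otimes_{\min}F)=\mathcal{B}(PH_E^m)\otimes_{\min}F$. Next observe that $X\subseteq\mathcal{B}(PH_E)\otimes_{\min}F$: the term $Pe_{ij}P\otimes 1_{H_F}$ lies in $\mathcal{B}(PH_E)\odot F$ because $1_{H_F}\in F$. Moreover, $q_{\pi_F}$ agrees with $\mathrm{id}\otimes\pi_F$ on $X$, since $\pi_F(1_{H_F})=1$. So $q_{\pi_F}|_X$ is the restriction of a complete isometry, which gives the desired inequality.

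The only point that needs care is that the minimal operator system tensor product $\mathcal{B}(PH_E)\otimes_{\min}F$ sits completely isometrically inside $\mathcal{B}(PH_E)\otimes_{\min}C^*(F)\subseteq\mathcal{B}(PH_E\otimes H_F)$. This holds by injectivity of the minimal tensor product: it is spatial, so the norms computed in the two settings agree. Once this is in place, \Tref{P2} applies to the pair $(\mathrm{id},\pi_F|_F)$. With that, the quotient map by $\sk(PH_E)\otimes\ker(\pi_F)$ is completely isometric on $X$, which is exactly the definition of a boundary ideal.
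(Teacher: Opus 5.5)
Your proposal is correct and follows essentially the same route as the paper: you identify $q_{\pi_F}|_X$ with the restriction of $\mathrm{id}_{\mathcal{B}(PH_E)}\otimes_{\min} i_F$ (using $\pi_F\circ i=i_F$), which is a unital complete order injection by \Tref{P2} and hence a complete isometry by \Lref{P1}. The paper verifies this agreement on $\sk(PH_E)\odot F+PEP\otimes\bbc\cdot 1_{H_F}$ and extends by continuity, which is the same identification you obtain from \Lref{useful lemma2}.
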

\begin{proof}
Let $B:=C^*_{env}(F)$. By \Yref{q_pi_F}, we have a surjective $\ast$-homomorphism $q_\pi:C^*(X)\to \mathcal{C}(X_B)$ with kernel $\ker({q_{\pi}})= \sk(PH_E)\otimes\ker({\pi})$. We need to show that $q_\pi$ is a complete isometry on $X$. Since $i_F: F\to B$ is a unital complete order injection, Theorem \ref{P2} implies that $$\mbox{id}_{\mathcal{B}(PH_E)}\otimes_{\text{min}} i_F: \mathcal{B}(PH_E)\otimes_{\text{min}} F\longrightarrow\mathcal{B}(PH_E)\otimes_{\text{min}} B$$ is a unital complete order injection, and therefore by Lemma \ref{P1} it is a complete isometry. Now, let $\sum_{i=1}^{m} k_i\otimes f_i + PeP\otimes 1_{H_{F}} \in \sk(PH_E)\odot F+ PEP\otimes \bbc.1_{H_F}\subseteq X\subseteq  \mathcal{B}(PH_E)\otimes_{\text{min}} F$. Then
\begin{IEEEeqnarray*}{lCl}
& & \left.(\mathrm{id}_{\mathcal{B}(PH_E)}\otimes_{\min} i_F)\right|_{X}\big(\,\textstyle\sum_{i=1}^{m} k_i\otimes f_i + PeP\otimes 1_{H_{F}}\big)\\
&=& \textstyle\sum_{i=1}^{m} k_i\otimes i_F(f_i) + PeP\otimes 1_{H_{B}}\\
&=& \textstyle\sum_{i=1}^{m} k_i\otimes (\pi\circ i)(f_i) + PeP\otimes 1_{H_{B}}\\
&=& \textstyle\sum_{i=1}^{m} k_i\otimes (\left.\pi\right|_{F})(f_i) + PeP\otimes 1_{H_{B}}\\
&=& \left.(id_{\sk(PH_E)}\otimes \pi)\right|_{\sk(PH_E)\otimes_{\min} F} \big(\,\textstyle\sum_{i=1}^{m} k_i\otimes f_i\big) + PeP\otimes1_{H_{B}}\\
&=& \left.q_{\pi}\right|_X \big(\,\textstyle\sum_{i=1}^{m} k_i\otimes f_i+PeP\otimes 1_{H_F}\big).
\end{IEEEeqnarray*}
By the continuity of $\,\mathrm{id}_{\mathcal{B}(PH_E)}\otimes_{\text{min}} i_F: \mathcal{B}(PH_E)\otimes_{\text{min}} F\to  \mathcal{B}(PH_E)\otimes_{\text{min}} B$ and $q_\pi:C^*(X)\to \mathcal{C}(X_B)$, we have 
$\left(\mbox{id}_{\mathcal{B}(PH_E)}\otimes_{\text{min}} i_F)\right|_{X}=\left.q_{\pi}\right|_X.$
Since the left-hand side is the restriction of a complete isometry, $\left.q_{\pi}\right|_X$ is a complete isometry.
\end{proof}

Our next goal is to show that $\sk(PH_E)\otimes \ker(\pi_F)$ is the \v{S}ilov ideal of $C^*(X)$ relative to $X$. For this, let $J$ be any other boundary ideal of $C^*(X)$ relative to $X$. Set $K:=\sk(PH_E)\otimes C^*(F)$. Then $J\cap K$ is a closed two-sided ideal of $K$, and hence $J\cap K$ is of the form $\sk(PH_E)\otimes I$ for some closed two-sided ideal $I$ of $C^*(F)$. Thus, we have the following short exact sequence:
\[
0
\longrightarrow
I
\xrightarrow{}
C^*(F)
\xrightarrow{q_I}
C^*(F)/I
\longrightarrow
0.
\]
Since $\sk(PH_E)$ is a nuclear $C^*$-algebra, we obtain the short exact sequence
\[
0
\longrightarrow
J\cap K
\xrightarrow{}
K
\xrightarrow{id_{\sk(PH_E)}\ \otimes \ q_I}
\sk(PH_E)\otimes (C^*(F)/I)
\longrightarrow
0.
\]
Therefore, $K/(J\cap K)$ is $\ast$-isomorphic to $\sk(PH_E)\otimes (C^*(F)/I)$. Let $q_{J}: C^*(X)\to C^*(X)/J$ be the quotient map. Then, the following diagram commutes:
\begin{equation}\label{commutative diagram}
\begin{tikzcd}[column sep=2em, row sep=2em]
\sk(PH_E) \otimes C^*(F) = K \  
  \arrow[r, "\left.q_J\right|_K"] 
  \arrow[drr, "id_{\sk\left(PH_E\right)} \ \otimes \ q_I"'] 
& \ \ \  q_J(K) = (K+J)/J 
  \arrow[r, phantom, "\underset{\text{\scriptsize $*$ iso.}}{\cong}"] 
& K / K \cap J 
  \arrow[d, phantom, "\cong" sloped] 
  \arrow[d, phantom, "\text{\scriptsize $\ast$-iso.\ }" left] \\
& & \sk(PH_E) \otimes (C^*(F) / I)
\end{tikzcd}
\end{equation}
We need the following result to establish the \v{S}ilov ideal of $C^*(X)$ relative to $X$.

\begin{lmma}\label{I is a boundary ideal of C*(F)}
The quotient map $q_I: C^*(F)\to C^*(F)/I$ is a complete isometry on $F$. Thus, $I$ is a boundary ideal of $C^*(F)$ relative to $F$, which further implies that $I\subseteq \ker(\pi_F)$.
\end{lmma}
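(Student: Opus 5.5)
The idea is to transfer the complete isometry of $q_J$ on $X$ to $q_I$ on $F$. The bridge is the commutative diagram \eqref{commutative diagram}, restricted to elements of the form $e_0\otimes f$ with $e_0$ a rank-one projection. Fix a unit vector $\xi_0\in PH_E$ and put $e_0:=|\xi_0\rangle\langle\xi_0|\in\sk(PH_E)$. For a matrix $[f_{ij}]\in M_m(F)$, the element $[e_0\otimes f_{ij}]$ lies in $M_m(\sk(PH_E)\otimes_{\min}F)\subseteq M_m(X)$. Since $J$ is a boundary ideal relative to $X$, the map $q_J$ is completely isometric on $X$, so
\[
\big\|[q_J(e_0\otimes f_{ij})]\big\|_{M_m(C^*(X)/J)}=\big\|[e_0\otimes f_{ij}]\big\|_{M_m(\mathcal{B}(PH_E\otimes H_F))}=\big\|[f_{ij}]\big\|_{M_m(C^*(F))}.
\]
The last equality holds because $e_0\otimes(\cdot)$ is an injective $\ast$-homomorphism.

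Next compute the left side using the diagram. The element $[e_0\otimes f_{ij}]$ lies in $M_m(K)$. On $K$, the map $q_J|_K$ agrees with $\mathrm{id}_{\sk(PH_E)}\otimes q_I$ after composing with the $\ast$-isomorphisms $(K+J)/J\cong K/(K\cap J)\cong \sk(PH_E)\otimes(C^*(F)/I)$. The inclusion $(K+J)/J\subseteq C^*(X)/J$ is an injective $\ast$-homomorphism of $C^*$-algebras, hence completely isometric, and $\ast$-isomorphisms are completely isometric. Therefore
\[
\big\|[q_J(e_0\otimes f_{ij})]\big\|=\big\|[e_0\otimes q_I(f_{ij})]\big\|_{M_m(\sk(PH_E)\otimes (C^*(F)/I))}=\big\|[q_I(f_{ij})]\big\|_{M_m(C^*(F)/I)}.
\]
Here again $e_0\otimes(\cdot)$ is an injective $\ast$-homomorphism $C^*(F)/I\to\sk(PH_E)\otimes(C^*(F)/I)$. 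Combining the two displays gives $\|[q_I(f_{ij})]\|=\|[f_{ij}]\|$ for all $m$, so $q_I$ is a complete isometry on $F$. By definition, $I$ is then a boundary ideal of $C^*(F)$ relative to $F$.

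The last claim, $I\subseteq\ker(\pi_F)$, follows from the fact recalled in \Sref{Sec2} that $\ker(\pi_F)$ is the \v{S}ilov ideal of $C^*(F)$ relative to $F$, which contains every boundary ideal.

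The main point needing care is the justification that the composite in \eqref{commutative diagram} really equals $\mathrm{id}\otimes q_I$ on $K$, not merely abstractly. The identification $K/(K\cap J)\cong \sk(PH_E)\otimes(C^*(F)/I)$ comes from the exactness of the minimal tensor sequence, using the nuclearity of $\sk(PH_E)$. Its kernel is exactly $\sk(PH_E)\otimes I=J\cap K$, so the induced isomorphism is compatible with the quotient maps on elementary tensors, and hence on all of $K$ by continuity. One should also note that $[e_0\otimes f_{ij}]$ indeed lies in $M_m(X)$, which is immediate from the definition of $X$.
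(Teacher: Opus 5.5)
Your proof is correct and follows the paper's route. Both transfer the complete isometry of $q_J$ on $X$ through the commutative diagram, using the identification $q_J|_K \leftrightarrow \mathrm{id}_{\mathcal{K}(PH_E)}\otimes q_I$, to get isometry of $\mathrm{id}_n\otimes q_I$ on $M_n(F)$. The only cosmetic difference is that the paper places $M_n(F)\cong M_n(\mathbb{C})\otimes F$ inside $\mathcal{K}(PH_E)\otimes F$ (so a level-one isometry suffices), whereas you use a rank-one projection $e_0$ together with the $m$-th matrix level of $q_J$.
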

\begin{proof}
It is enough to show that for each $n\in\bbn,\,\mathrm{id_n}\otimes q_I$ is an isometry on $M_n(\mathbb{C})\otimes_{\min} F$, where $\mathrm{id_n}$ denotes the identity map of $M_n(\mathbb{C})$. Fix any $n\in \mathbb{N}$, and observe that one can view $M_n(\mathbb{C})\subseteq\mathcal{K}(PH_E)$ as a subalgebra. This implies $M_n(\mathbb{C})\otimes_{\min} F\subseteq \mathcal{K}(PH_E)\otimes_{\min} F\subseteq X$. Since the diagram \ref{commutative diagram} commutes, by using the facts that $q_J$ is a complete isometry on $X$ and any $\ast$-algebra isomorphism is a complete isometry, we have $\mathrm{id_{\mathcal{K}(PH_E)}}\otimes q_I$ is a complete isometry on $\sk(PH_E)\otimes F$, and hence on $M_n(\mathbb{C})\otimes_{\min} F$.
\end{proof}

\begin{ppsn}\label{Silov ideal claim}
The \v{S}ilov ideal of $C^*(X)$ relative to $X$ is $\sk(PH_E)\otimes \ker(\pi_F)$.
\end{ppsn}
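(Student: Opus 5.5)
By \Lref{p1}, $\sk(PH_E)\otimes\ker(\pi_F)$ is already a boundary ideal of $C^*(X)$ relative to $X$. It therefore remains to show that every boundary ideal $J$ of $C^*(X)$ relative to $X$ is contained in $\sk(PH_E)\otimes\ker(\pi_F)$. Fix such a $J$ and keep the notation set up before \Lref{I is a boundary ideal of C*(F)}: $K=\sk(PH_E)\otimes C^*(F)$ and $J\cap K=\sk(PH_E)\otimes I$. By \Lref{I is a boundary ideal of C*(F)}, $I\subseteq\ker(\pi_F)$, hence $J\cap K\subseteq\sk(PH_E)\otimes\ker(\pi_F)$. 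The plan is to reduce to this intersection by proving $J\subseteq K$, which would give $J=J\cap K\subseteq\sk(PH_E)\otimes\ker(\pi_F)$.

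The key step, and the one I expect to be the main obstacle, is showing $J\subseteq K$. I would use the essentiality of $K$ (\Tref{mainresult 0}, via \Lref{helpful lemma}$(iv)$) together with the boundary property. Suppose $z=k+PcP\otimes 1_{H_F}\in J$ with $c\in C^*(E)$. Then $(f_N\otimes 1_{H_F})z\in J\cap K$ for the finite-rank projections $f_N$ of \Lref{helpful lemma}. The goal is to force $c=0$.

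First I would try to show directly that $J\cap K=\{0\}$, i.e.\ $I=\{0\}$, which is not automatic. A cleaner route is the following. The image $\sigma(J)$ is an ideal of $C^*(E)$, and I claim it must be $\{0\}$. If $0\neq c\in\sigma(J)$, write $c=\sigma(z)$ with $z\in J$. Since $f_N\otimes 1_{H_F}\in X$ has norm one, the boundary property of $J$ gives $\|q_J(f_N\otimes 1)\|=1$. On the other hand, $K/(J\cap K)\cong\sk(PH_E)\otimes(C^*(F)/I)$ by \eqref{commutative diagram}, and $q_J$ is faithful on $PC^*(E)P\otimes 1$ modulo $K$ only if $\sigma(J)=0$. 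So I would argue by contradiction. Multiplying $z$ on both sides by $(1-f_N)\otimes 1$ and taking $N$ large gives elements of $J$ that are close to $P c' P\otimes 1$ modulo compacts, for some $c'\neq 0$; this should contradict the complete isometry of $q_J$ on the operator system elements $PeP\otimes 1+ k\otimes f$.

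Since this contradiction step is delicate, I would replace it with a more robust argument. Because $J$ is a boundary ideal, $q_J$ is faithful on $X$. If $J\not\subseteq K$, then $J+K\supsetneq K$ and $\sigma(J)\neq 0$ is a nonzero ideal of $C^*(E)$. Composing with $\sigma$ and the faithful Calkin-type embedding $c\mapsto PcP+\sk$ gives $P c P\otimes 1\in J+K$, hence $(P c P\otimes 1)(f_N\otimes 1)$-type compressions lie in $J\cap K=\sk\otimes I$. Applying a state of $F$ as in \Lref{helpful lemma}$(iv)$ and passing to the limit $N\to\infty$, I expect to obtain $PcP\in\sk(PH_E)$ whenever $I\neq C^*(F)$ (note $I\neq C^*(F)$ because $q_I$ is isometric on $F\ni 1$), forcing $c=0$. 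This last limiting step, which mirrors the essential-ideal argument, is where the real work lies.

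Once $J\subseteq K$ is established, the inclusion $J\subseteq\sk(PH_E)\otimes\ker(\pi_F)$ follows from the first paragraph. Maximality then holds, and $\sk(PH_E)\otimes\ker(\pi_F)$ is the \v{S}ilov ideal.
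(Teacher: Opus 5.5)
Your overall route works and differs from the paper's: you first show that every boundary ideal $J$ satisfies $J\subseteq K$, and then conclude from $J=J\cap K=\sk(PH_E)\otimes I$ and $I\subseteq\ker(\pi_F)$. However, the decisive step is only announced (``I expect to obtain''), and as you sketch it, it would not go through.

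\begin{itemize}
\item It is not $(PcP\otimes 1_{H_F})(f_N\otimes 1_{H_F})$ that lies in $J\cap K$. What lies there is $z(f_N\otimes 1_{H_F})=k(f_N\otimes 1_{H_F})+PcPf_N\otimes 1_{H_F}$, for $z=k+PcP\otimes 1_{H_F}\in J$.
\item An arbitrary state ``as in part $(iv)$ of \Lref{helpful lemma}'' gives no information here. Its slice map sends $\sk(PH_E)\otimes I$ into $\sk(PH_E)$, and $PcPf_N$ is already compact.
\end{itemize}

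The missing choice is a state $w$ of $C^*(F)$ (not of $F$) with $w|_I=0$. Such a state exists precisely because $I\neq C^*(F)$: compose any state of $C^*(F)/I$ with $q_I$. Your justification of $I\neq C^*(F)$ is correct. Now $S_w:=\mathrm{id}_{\mathcal{B}(PH_E)}\otimes w$ annihilates $\sk(PH_E)\otimes I$ and satisfies $S_w(x(T\otimes 1_{H_F}))=S_w(x)T$. Hence
\[
0=S_w\big(z(f_N\otimes 1_{H_F})\big)=S_w(k)f_N+PcPf_N\quad\text{for all }N.
\]
Letting $f_N\to 1$ strongly gives $PcP=-S_w(k)\in\sk(PH_E)$, so $c=0$ and $z\in K$. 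No delicate limiting argument is needed. Your earlier attempt via $\|q_J(f_N\otimes 1_{H_F})\|=1$ and $(1-f_N)$-compressions can simply be dropped.

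With this fix your proof is complete. The paper never shows $J\subseteq K$ directly. Instead it:
\begin{itemize}
\item forms $M=J+\sk(PH_E)\otimes\ker(\pi_F)$;
\item uses $J\cap K\subseteq\sk(PH_E)\otimes\ker(\pi_F)$ to get $M\cap K=\ker(q_{\pi_F})$;
\item pushes $M$ and $K$ forward under $q_{\pi_F}$;
\item forces $q_{\pi_F}(M)=0$ via essentiality of $\sk(PH_E)\otimes C^*_{env}(F)$ in $\sk(PH_E)\otimes C^*_{env}(F)+PC^*(E)P\otimes 1$, i.e.\ \Lref{helpful lemma}$(iv)$ with $D=C^*_{env}(F)$.
\end{itemize}

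Your corrected argument uses the same essentiality mechanism, but modulo $\sk(PH_E)\otimes I$ rather than modulo $\sk(PH_E)\otimes\ker(\pi_F)$. It yields a cleaner structural fact: every closed ideal of $C^*(X)$ either contains $K$ or is contained in $K$. The boundary property is then needed only to guarantee $I\neq C^*(F)$ and $I\subseteq\ker(\pi_F)$. Your route also avoids the auxiliary ideal $M$ and the identity $q_{\pi_F}(M)\cap q_{\pi_F}(K)=q_{\pi_F}(M\cap K)$. The paper's route, in turn, reuses \Lref{helpful lemma}$(iv)$ verbatim without a separate slice-map computation.
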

\begin{proof}
Let $J$ be any other boundary ideal of $C^*(X)$ relative to $X$. Denote $K:=\sk(PH_E)\otimes C^*(F)$, which is an (essential) ideal of $C^*(X)$ by \Tref{mainresult 0}. Then, $J\cap K=\sk(PH_E)\otimes I$ is a closed two-sided ideal of $K$ for some closed two-sided $I$ of $C^*(F)$, and \Lref{I is a boundary ideal of C*(F)} implies the following:
\begin{equation}\label{equation1}
J\cap K=\sk(PH_E)\otimes I\subseteq \sk(PH_E)\otimes \ker(\pi_F).
\end{equation}
Consider $M:= J+ \sk(PH_E)\otimes \ker(\pi_F)$, which is a  two-sided ideal of $C^*(X)$. We claim that 
\begin{equation}\label{equation2}
M\cap K= \sk(PH_E)\otimes \ker(\pi_F)=\ker(q_{\pi_F}).
\end{equation}
Since $K=\sk(PH_E)\otimes C^*(F)\supseteq \sk(PH_E)\otimes \ker(\pi_F)$ and $M$ by definition contain $\sk(PH_E)\otimes \ker(\pi_F)$, we have $M\cap K\supseteq \sk(PH_E)\otimes \ker(\pi_F)$. To show ``$\subseteq$'', let $x\in M\cap K$. Then $x=\alpha +\beta$ for some $\alpha\in J$ and $\beta\in \sk(PH_E)\otimes \ker(\pi_F)\subseteq K$, so $\alpha=x-\beta$. Since $x,\beta\in K$, we have $\alpha\in K$, which implies that $\alpha \in J\cap K\subseteq\sk(PH_E)\otimes \ker(\pi_F)$ by \eqref{equation1}. Therefore, $\alpha, \beta\in \sk(PH_E)\otimes \ker(\pi_F)$, and so $x\in \sk(PH_E)\otimes \ker(\pi_F)$. Consequently, we have $M\cap K\subseteq \sk(PH_E)\otimes \ker(\pi_F)$. This completes the proof of the claim in \Eref{equation2}.

Let $B:=C^*_{env}(F)$. Since $M$ and $K$ are ideals in $C^*(X)$, their images $q_{\pi_F}(M)$ and $q_{\pi_F}(K)$ are ideals in $\mathcal{C}(X_B)=\sk(PH_E)\otimes B+PC^*(E)P\otimes 1_{H_{B}}$, where $q_{\pi_F}$ is as in \Yref{q_pi_F}. From equation \eqref{equation2}, we obtain the following:
\begin{equation}\label{equation 3}
q_{\pi_F}(M)\cap q_{\pi_F}(K)=q_{\pi_F}(M\cap K)=q_{\pi_F}(\ker(q_{\pi_F}))=\{0\}.
\end{equation}
Now, the part $(iv)$ of Lemma \ref{helpful lemma} shows that the ideal $q_{\pi_F}(K)=\sk(PH_E)\otimes C^{*}_{env}(F)$ is essential.
Thus, equation \eqref{equation 3} forces $q_{\pi_F}(M)=\{0\}$, and consequently $$M=J+ \sk(PH_E)\otimes \ker(\pi_F)\subseteq \ker(q_{\pi_F})=\sk(PH_E)\otimes \ker(\pi_F)$$ by \Yref{q_pi_F}. This shows that $J\subseteq \sk(PH_E)\otimes \ker(\pi_F)$ and completes the proof.
\end{proof}

\begin{thm}\label{Main theorem 1}
Let $i$ be the inclusion of $X:=X(F;E,P)$ in the $C^*$-algebra generated by it inside $\mathcal{B}(PH_E\otimes H_F)$, which is $C^*(X)=\sk(PH_E)\otimes C^*(F)+ PC^*(E)P\otimes \bbc.1_{H_F}$. Let 
\[
q_{\pi_F}:C^*(X) \longrightarrow \sk(PH_E)\otimes C^*_{env}(F)+ PC^*(E)P\otimes\bbc. 1_{H_{C^*_{env}(F)}}
\]
be the surjective $\ast$-homomorphism in \Yref{q_pi_F}. The $C^*$-envelope $(C^*_{env}(X),\lambda)$ of $X$ is given by 
\[
C^*_{env}(X)\cong\sk(PH_E)\otimes C^*_{env}(F)+ PC^*(E)P\otimes \bbc.1_{H_{C^*_{env}(F)}}\quad\mbox{and} \quad\lambda=q_{\pi_F}\circ i
\]
where the sum is linearly direct.
\end{thm}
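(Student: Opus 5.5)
The plan is to combine \Pref{Silov ideal claim} with the concrete description of the $C^*$-envelope recalled in \Sref{Sec2}. For a concrete operator system $X\subseteq\mathcal{B}(PH_E\otimes H_F)$ with inclusion $i:X\to C^*(X)$, the pair $(C^*(X),i)$ is a $C^*$-extension of $X$. The $C^*$-envelope is then $C^*(X)/\mathcal{J}$, with embedding $q_{\mathcal J}\circ i$, where $\mathcal J$ is Arveson's \v{S}ilov ideal of $C^*(X)$ relative to $X$. By \Pref{Silov ideal claim}, $\mathcal J=\sk(PH_E)\otimes\ker(\pi_F)$. By \Yref{q_pi_F}, this ideal is exactly $\ker(q_{\pi_F})$, where $q_{\pi_F}:C^*(X)\to\mathcal{C}(X_B)$ is a surjective $\ast$-homomorphism with $B=C^*_{env}(F)$. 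The first isomorphism theorem therefore gives a $\ast$-isomorphism $C^*(X)/\mathcal J\cong\mathcal{C}(X_B)$ that intertwines the quotient map with $q_{\pi_F}$. It follows that $(C^*_{env}(X),\lambda)$ is equivalent to $(\mathcal{C}(X_B),q_{\pi_F}\circ i)$.

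Next I check that $(\mathcal{C}(X_B),q_{\pi_F}\circ i)$ is itself a $C^*$-extension of $X$, so that the identification with the envelope is legitimate. By \Lref{p1}, $q_{\pi_F}\circ i$ is a unital complete isometry, and hence a unital complete order injection by \Lref{P1}. It generates $\mathcal{C}(X_B)$ because $q_{\pi_F}$ is a surjective $\ast$-homomorphism and $i(X)$ generates $C^*(X)$; the image of a generating set under a surjective $\ast$-homomorphism generates the range. Alternatively, I could verify the universal property directly. Given any $C^*$-extension $(A,\mu)$ of $X$, there is a surjection $A\to C^*_{env}(X)\cong C^*(X)/\mathcal J$, and composing with the isomorphism above lands in $\mathcal{C}(X_B)$ compatibly with $\lambda$.

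Linear directness of the sum follows from \Lref{helpful lemma} $(ii)$ with $D=C^*_{env}(F)$ realized faithfully on $H_{C^*_{env}(F)}$. The same lemma's part $(iii)$ confirms that $\mathcal{C}(X_B)$ is a $C^*$-algebra.

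The main obstacle was already handled in \Pref{Silov ideal claim}, namely maximality of the boundary ideal. What remains is bookkeeping: verifying that the quotient map onto $C^*(X)/\mathcal J$ agrees, under the isomorphism, with $q_{\pi_F}$ on $X$. I would do this on elementary tensors $k\otimes f+PeP\otimes 1$ and extend by continuity, exactly as in the proof of \Lref{p1}.
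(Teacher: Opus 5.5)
Your proposal is correct and matches the paper's argument. The paper's proof simply combines \Pref{Silov ideal claim}, which identifies the \v{S}ilov ideal as $\mathcal{K}(PH_E)\otimes\ker(\pi_F)$, with \Yref{q_pi_F}, which identifies this ideal as $\ker(q_{\pi_F})$; hence $C^*_{env}(X)\cong C^*(X)/\ker(q_{\pi_F})$, realized as the range of $q_{\pi_F}$, with embedding $q_{\pi_F}\circ i$. Your closing ``bookkeeping'' step is unnecessary, since the first isomorphism theorem already makes the induced isomorphism composed with the quotient map equal to $q_{\pi_F}$.
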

\begin{proof}
Combine \Yref{q_pi_F} and \Pref{Silov ideal claim}.
\end{proof}

\begin{crlre}\label{mainresult 001}
$C^*_{env}(X)$ is a Toeplitz type $C^*$-algebra extension of $C^*(E)$ by the stable ideal $\sk(PH_E)\otimes C^*_{env}(F)$, which is an essential ideal.
\end{crlre}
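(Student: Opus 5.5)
The plan is to read off the statement from \Tref{Main theorem 1} and then apply to $D=C^*_{env}(F)$ the structural results already proved for a general unital $C^*$-algebra $D$. By \Tref{Main theorem 1},
\[
C^*_{env}(X)\cong \mathcal{C}(X_B):=\sk(PH_E)\otimes B+PC^*(E)P\otimes\bbc.1_{H_B},\qquad B:=C^*_{env}(F).
\]
Here $B$ is realized faithfully on some Hilbert space $H_B$. We may take $H_B$ separable, or simply note that separability of $H_B$ is never used in \Lref{helpful lemma}.

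The key observation is that $(E,H_E,P)\in\mathrm{Op}_{\mathrm{OS}}$ is unchanged, and only the second tensor factor is replaced. Therefore \Lref{helpful lemma} applies verbatim with $D=B$. It gives the following three facts.
\begin{enumerate}[$(1)$]
\item $\mathcal{C}(X_B)$ is a unital $C^*$-subalgebra of $\mathcal{B}(PH_E\otimes H_B)$.
\item $(\sk(PH_E)\otimes B)\cap(PC^*(E)P\otimes 1_{H_B})=\{0\}$, so the sum is linearly direct.
\item $\sk(PH_E)\otimes B$ is an essential ideal of $\mathcal{C}(X_B)$.
\end{enumerate}

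Next, \Lref{sigma map used in prop number} with $D=B$ provides a surjective $\ast$-homomorphism $\widetilde\sigma:\mathcal{C}(X_B)\to C^*(E)$, given by $k+PcP\otimes 1\mapsto c$, whose kernel is exactly $\sk(PH_E)\otimes B$. This yields the short exact sequence
\[
0\longrightarrow \sk(PH_E)\otimes C^*_{env}(F)\longrightarrow C^*_{env}(X)\longrightarrow C^*(E)\longrightarrow 0 .
\]

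Finally, the conditions of \Dref{Hawkins Zacharias defn} must be checked with $A=C^*(E)$ and the projection $P$. Condition $(i)$, namely $[P,a]\in\sk(H_E)$ for all $a\in C^*(E)$, was shown for the map $\Phi:\mathrm{Op}_{\mathrm{OS}}\to\mathrm{Op}_{C^*}$. Condition $(ii)$ is the displayed form of $C^*_{env}(X)$. Condition $(iii)$ is fact $(2)$ above. The ideal is stable because it is of the form $\sk\otimes B$.

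There is no real obstacle here. The only point needing care is that the isomorphism in \Tref{Main theorem 1} is an isomorphism onto the concrete algebra $\mathcal{C}(X_B)$. Consequently the ideal and the quotient map transported to $C^*_{env}(X)$ are exactly those of $\mathcal{C}(X_B)$, and essentiality, being an algebraic property, passes through the isomorphism. The argument is word for word that of \Yref{mainresult 01}, with $C^*(F)$ replaced by $C^*_{env}(F)$.
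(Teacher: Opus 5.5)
Your proposal is correct and takes essentially the same route as the paper. You identify $C^*_{env}(X)$ with $\sk(PH_E)\otimes C^*_{env}(F)+PC^*(E)P\otimes\bbc.1$ via \Tref{Main theorem 1}, then use \Lref{helpful lemma} (direct sum and essentiality) and the quotient map $x+PcP\otimes 1\mapsto c$ with $D=C^*_{env}(F)$ to verify the conditions of \Dref{Hawkins Zacharias defn}; your remarks on representing $C^*_{env}(F)$ faithfully and transporting essentiality through the isomorphism make explicit what the paper leaves implicit.
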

\begin{proof}
Part $(ii)$ of \Lref{helpful lemma}, and the fact that $[P,E]\subseteq\mathcal{K}(H_E)$ implies $[P,C^*(E)]\subseteq\mathcal{K}(H_E)$, ensures that the map $q:C^*_{env}(X)\to C^*(E)$ given by $x+PcP\otimes 1\mapsto c$ is a well-defined unital surjective $\ast$-homomorphism, and \Tref{Main theorem 1} implies that we have a short exact sequence of $C^*$-algebras
\[
0\longrightarrow \sk(PH_E)\otimes C^*_{env}(F)\longrightarrow C^*_{env}(X)\longrightarrow C^*(E)\longrightarrow 0
\]
with the ideal $\sk(PH_E)\otimes C^*_{env}(F)$ an essential ideal, thus satisfying all the requirements of \Dref{Hawkins Zacharias defn}.
\end{proof}

An immediate consequence of \Dref{qds for operator systems} is the following.
\begin{crlre}\label{Main theorem 1011}
For quantum double suspension of a unital operator system $F\subseteq\mathcal{B}(H_F)$, we have $C^*_{env}(\Sigma^2F_{(n)})=\Sigma^2(C^*_{env}(F))$ for each $n\ge 2$.
\end{crlre}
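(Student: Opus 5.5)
The plan is to specialize \Tref{Main theorem 1} to the quantum double suspension setting and then identify the resulting algebra with $\Sigma^2(C^*_{env}(F))$ through \Lref{similarity we require}.

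Fix $n\ge 2$ and take $E=C^*(\bbz)_{(n)}\subseteq\mathcal{B}(\ell^2(\bbz))$ with $P$ the Hardy projection. By Example \ref{crucial example}$(i)$, $(E,\ell^2(\bbz),P)\in\mathrm{Op}_{\mathrm{OS}}$, and by \Dref{qds for operator systems}, $\Sigma^2F_{(n)}=X(F;E,P)$. Two facts are needed here:
\begin{itemize}
\item $C^*(E)=C(S^1)$, as noted in Example \ref{crucial example};
\item $PC(S^1)P+\sk(\ell^2(\bbn))=\mathcal{T}$, and the sum is direct.
\end{itemize}

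Applying \Tref{Main theorem 1} then gives
\[
C^*_{env}(\Sigma^2F_{(n)})\cong\sk(\ell^2(\bbn))\otimes C^*_{env}(F)+PC(S^1)P\otimes\bbc\cdot 1,
\]
where $C^*_{env}(F)$ is represented faithfully on some Hilbert space, and the sum is linearly direct. By \Yref{mainresult 001}, this is a Toeplitz type $C^*$-extension
\[
0\to\sk\otimes C^*_{env}(F)\to C^*_{env}(\Sigma^2F_{(n)})\to C(S^1)\to 0
\]
with essential ideal.

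Next, invoke \Lref{similarity we require} with $A=C^*_{env}(F)$. The algebra $\sk(\ell^2(\bbn))\otimes A+\mathcal{T}\otimes\bbc$ is carried by the flip onto $A\otimes\sk+\bbc\otimes\mathcal{T}$, which is $\Sigma^2A=C^*(A\otimes p,\,1\otimes T)$. To justify this identification I would check directly that the generators match. First, $p=1-T^*T$ is the rank-one projection, so $A\otimes p$ together with $1\otimes T$ generates $A\otimes\sk$. Second, $\mathcal{T}=C^*(T)$ sits inside $\bbc\otimes\mathcal{T}$. The remaining detail is that $PC(S^1)P\otimes 1$ equals $\mathcal{T}\otimes 1$ modulo $\sk\otimes 1\subseteq\sk\otimes A$. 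This gives $C^*_{env}(\Sigma^2F_{(n)})\cong\Sigma^2(C^*_{env}(F))$.

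The main point requiring care is that the isomorphism in \Tref{Main theorem 1} is realized concretely via $q_{\pi_F}$. Its target is built from a faithful representation of $C^*_{env}(F)$, whereas $\Sigma^2$ is defined abstractly. I would argue that the Toeplitz type algebra is independent of the chosen faithful representation of $C^*_{env}(F)$, since $\sk\otimes A$ is the spatial tensor product and is therefore representation independent. I would also note that the equality is meant up to $\ast$-isomorphism; \Lref{similarity we require} shows it is even an isomorphism of extensions up to $z\mapsto\bar z$. Since nothing depends on $n$ beyond $C^*(E)=C(S^1)$, the conclusion holds for each $n\ge2$.
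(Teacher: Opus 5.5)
Your proposal is correct and follows the paper's route. You specialize \Tref{Main theorem 1} to $E=C^*(\bbz)_{(n)}$ with the Hardy projection, so that $C^*(E)=C(S^1)$ and $PC(S^1)P+\sk(\ell^2(\bbn))=\mathcal{T}$, and then identify the resulting Toeplitz type algebra with $\Sigma^2(C^*_{env}(F))$ via the flip of \Lref{similarity we require}. Your added remarks on independence from the faithful representation and on the $z\mapsto\bar z$ twist only make explicit what the paper leaves implicit in calling this an immediate consequence.
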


\subsection{Revisiting hyperrigidity and the hyperrigidity conjecture}\label{Sec6.2}

Let $(E,H_E,P)\in\mathrm{Op}_{\mathrm{OS}}$ be fixed. For any unital operator system $F\subseteq\cb(H_F)$, consider the following recursive chain:
\begin{IEEEeqnarray}{lCl}\label{new definition}
F_1:=F\,,\qquad F_{j+1}:=X(F_j;E,P)\quad\mbox{for }\,j\ge 1.
\end{IEEEeqnarray}
In general, both of the following situations may occur:
\begin{enumerate}[$(i)$]
\item For some $k\neq j$, we have $F_k\cong_{\mathrm{coi}}F_j$.
\item For some $k\neq j$, we have $F_k\not\cong_{\mathrm{coi}}F_j$, but $C^*_{env}(F_k)\cong C^*_{env}(F_j).$
\end{enumerate}

In the following, we provide one situation when it is guaranteed that for any $k\neq j$, we have $C^*_{env}(F_k)\ncong C^*_{env}(F_j)$, and consequently $F_k\not\cong_{\mathrm{coi}}F_j$.

\begin{thm}\label{proposition for sequence of distinct elements}
Let $F\subseteq\cb(H_F)$ and $E\subseteq\cb(H_E)$ be unital operator systems such that $C^*(E)\cap\sk(H_E)=\{0\}$, that is, $(E,H_E,\mathrm{id}_{H_E})\in \mathrm{Op}_{\mathrm{OS}}$. Suppose that, for some $i\in\{0,1\}$, we have $$\mathrm{rank}(K_i(C^*_{env}(F)))<\infty\,,\qquad 0<\mathrm{rank}(K_i(C^*(E)))<\infty\,,$$ where $K_0,K_1$ are the $K$-groups. Let $\{F_j\}_{j\in \mathbb{N}}$ be as defined in \Eref{new definition}. Then for any $k\neq j$, we have $C^*_{env}(F_k)\ncong C^*_{env}(F_j),$ and consequently, $F_k\not\cong_{\mathrm{coi}}F_j$.
\end{thm}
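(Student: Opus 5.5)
The plan is to compute the $K$-groups of $C^*_{env}(F_j)$ recursively and show that the rank of $K_i$ grows strictly with $j$.

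By \Tref{Main theorem 1}, with $P=\mathrm{id}_{H_E}$, we have
\[
C^*_{env}(F_{j+1})\cong\sk(H_E)\otimes C^*_{env}(F_j)+C^*(E)\otimes\bbc\cdot 1 .
\]
By \Yref{mainresult 001} this sits in a short exact sequence
\[
0\to\sk(H_E)\otimes C^*_{env}(F_j)\to C^*_{env}(F_{j+1})\to C^*(E)\to 0 .
\]
In the degenerate case $P=\mathrm{id}$ the extension splits. The map $c\mapsto c\otimes 1$, i.e. $\varphi$ of \Lref{helpful lemma}(i) with $P=\mathrm{id}$, is a genuine unital $\ast$-homomorphism $C^*(E)\to C^*_{env}(F_{j+1})$, since $PcP\,Pc'P=Pcc'P$ when $P=\mathrm{id}$. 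It is a right inverse of the quotient map. This is the key reason for the hypothesis $P=\mathrm{id}_{H_E}$.

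A split extension gives a split short exact sequence in $K$-theory: the six-term sequence has vanishing boundary maps because the quotient map is split surjective on $K_*$. Combined with stability, $K_*(\sk\otimes D)\cong K_*(D)$, this yields for each $i$
\[
K_i(C^*_{env}(F_{j+1}))\cong K_i(C^*_{env}(F_j))\oplus K_i(C^*(E)).
\]
Taking ranks gives
\[
\mathrm{rank}\,K_i(C^*_{env}(F_{j+1}))=\mathrm{rank}\,K_i(C^*_{env}(F_j))+\mathrm{rank}\,K_i(C^*(E)).
\]
By induction from $F_1=F$, we get $\mathrm{rank}\,K_i(C^*_{env}(F_j))=\mathrm{rank}\,K_i(C^*_{env}(F))+(j-1)\,r$ with $r:=\mathrm{rank}\,K_i(C^*(E))\in(0,\infty)$. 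This is finite and strictly increasing in $j$. Hence for $k\neq j$ the $K_i$-groups have different finite ranks, so $C^*_{env}(F_k)\not\cong C^*_{env}(F_j)$.

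For the operator system conclusion, a unital complete order isomorphism $F_k\cong F_j$ would induce a $\ast$-isomorphism of $C^*$-envelopes by the universal property / uniqueness of the $C^*$-envelope. This contradicts what we have just shown.

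The main point to check carefully is the splitting. We need the identification in \Tref{Main theorem 1} to carry $PC^*(E)P\otimes 1$ to a subalgebra, which holds since $P=\mathrm{id}$, and the splitting must be compatible with the quotient map of \Yref{mainresult 001}. If a non-split situation were allowed, the boundary maps could destroy the rank additivity. A secondary point is that $\sk(H_E)$ is the compacts on a separable infinite-dimensional space, since $P=\mathrm{id}$ is infinite-dimensional, so stability of $K$-theory applies. Finiteness of the ranks is needed only so that the strict inequality is meaningful.
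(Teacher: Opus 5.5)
Your proposal is correct and follows essentially the same route as the paper. Both arguments identify $C^*_{env}(F_{j+1})$ via \Tref{Main theorem 1} with $P=\mathrm{id}$, split the extension by the $\ast$-homomorphism $e\mapsto e\otimes 1$, and use split exactness plus stability to get $K_i(C^*_{env}(F_{j+1}))\cong K_i(C^*_{env}(F_j))\oplus K_i(C^*(E))$, from which the strictly increasing finite ranks give the conclusion. The only difference is that the paper cites Blackadar's result on split exact sequences, where you argue through vanishing boundary maps in the six-term sequence; these amount to the same thing.
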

\begin{proof}
By \Tref{Main theorem 1}, we have for any $j\geq1$,
\begin{IEEEeqnarray*}{lCl}
C^*_{env}(F_{j+1})=C^*_{env}(X(F_j;E,\mathrm{id}_{H_E})) &\cong& X(C^*_{env}(F_j);C^*(E),\mathrm{id}_{H_E})\\
&=& \sk(H_E)\otimes C^*_{env}(F_j)+C^*(E)\otimes\mathbb C\cdot1_{H_F}.
\end{IEEEeqnarray*}
Thus, for every $j\geq1$, we have the short exact sequence $$ 0\longrightarrow \sk(H_E)\otimes C^*_{env}(F_j) \longrightarrow C^*_{env}(F_{j+1})\overset{\sigma_j}{\longrightarrow} C^*(E) \longrightarrow0, $$ where $\sigma_j:C^*_{env}(F_{j+1})\to C^*(E)$ is the surjective $\ast$-homomorphism given by $\sigma_j(x+e\otimes 1_{H_F})=e$, for $x\in \sk(H_E)\otimes C^*_{env}(F_j)$ and $e\in C^*(E)$.
We have a splitting map $\varphi_j:C^*(E)\to C^*_{env}(F_{j+1})$ given by $\varphi_j(e)=e\otimes1_{H_F}$, which is an injective $\ast$-homomorphism and $\sigma_j\circ\varphi_j=\mathrm{id}_{C^*(E)}$. Hence, the above short exact sequence is split exact. By \cite[Proposition 8.3.6]{Bla}, for $i=0,1$, we have
\begin{IEEEeqnarray*}{lCl}
K_i(C^*_{env}(F_{j+1})) &\cong& K_i\big(\sk(H_E)\otimes C^*_{env}(F_j)\big) \oplus K_i(C^*(E))\\
&\cong& K_i(C^*_{env}(F_j)) \oplus K_i(C^*(E))
\end{IEEEeqnarray*}
since $H_E$ is infinite-dimensional. Thus, for any $k\ge 1$ we have $$K_i(C^*_{env}(F_k)) \cong K_i(C^*_{env}(F)) \bigoplus \big(\oplus_{r=1}^{k-1}\,K_i(C^*(E))\big).$$ Hence $$\mathrm{rank}\big(K_i(C^*_{env}(F_k))\big) = \mathrm{rank}\big(K_i(C^*_{env}(F))\big) +(k-1)\mathrm{rank}\big(K_i(C^*(E))\big).$$ By our hypothesis, if $C^*_{env}(F_k)\cong C^*_{env}(F_j)$, we get $k=j$. Therefore, for $k\neq j$, we have $ C^*_{env}(F_k)\ncong C^*_{env}(F_j)$, and consequently  $F_k\ncong_{\mathrm{coi}}F_j$.
\end{proof}

An immediate consequence is the following.
\begin{crlre}\label{P}
Let $F\subseteq\cb(H_F)$ and $E\subseteq\cb(H_E)$ be unital operator systems as in \Tref{proposition for sequence of distinct elements}. Assume that both are hyperrigid. The iteration process from \Tref{first main result of hyperigidity} produces infinitely many pairwise non-isomorphic hyperrigid operator systems.
\end{crlre}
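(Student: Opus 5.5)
The plan is to combine \Tref{first main result of hyperigidity} with \Tref{proposition for sequence of distinct elements}. Fix $E$ and $F$ as in the statement, so that $(E,H_E,\mathrm{id}_{H_E})\in\mathrm{Op}_{\mathrm{OS}}$ and the rank hypotheses on $K_i(C^*_{env}(F))$ and $K_i(C^*(E))$ hold for some $i\in\{0,1\}$. Then form the recursive chain $F_1:=F$, $F_{j+1}:=X(F_j;E,\mathrm{id}_{H_E})$ from \Eref{new definition}. The argument has two independent parts: every $F_j$ is hyperrigid, and the $F_j$ are pairwise non-isomorphic.

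\emph{Hyperrigidity.} I would argue by induction on $j$. The base case $F_1=F$ holds by assumption. If $F_j$ is hyperrigid in $C^*(F_j)$, then, since $E$ is hyperrigid in $C^*(E)$, the ``if'' direction of \Tref{first main result of hyperigidity} shows that $F_{j+1}=X(F_j;E,\mathrm{id}_{H_E})$ is hyperrigid in $C^*(F_{j+1})$.

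\emph{Pairwise non-isomorphism.} The hypotheses of \Tref{proposition for sequence of distinct elements} do not change along the chain, because they concern only $F=F_1$ and $E$. That theorem therefore applies directly and gives $C^*_{env}(F_k)\ncong C^*_{env}(F_j)$, and hence $F_k\not\cong_{\mathrm{coi}}F_j$, for all $k\neq j$. It follows that $\{F_j\}_{j\in\bbn}$ is an infinite family of pairwise non-isomorphic hyperrigid operator systems.

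\emph{Remarks.} Hyperrigidity gives $C^*_{env}(F)=C^*(F)$ (Remark \ref{motivation}), so the rank hypothesis could equally be stated on $C^*(F)$; this is not needed for the argument. No step is a genuine obstacle. The only point to check is that ``the iteration process'' in the statement means iterating in the $F$-slot with $E$ and $P=\mathrm{id}_{H_E}$ held fixed. This matches \Eref{new definition}, so the $K$-theoretic rank count in \Tref{proposition for sequence of distinct elements}, which grows by $\mathrm{rank}(K_i(C^*(E)))>0$ at each step, applies.
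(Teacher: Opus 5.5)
Your proposal is correct and matches what the paper intends. The paper states this corollary as an immediate consequence without a written proof. The argument is yours: hyperrigidity of each $F_j$ follows by induction from \Tref{first main result of hyperigidity}, and pairwise non-isomorphism follows from applying \Tref{proposition for sequence of distinct elements} to the chain \Eref{new definition} with $P=\mathrm{id}_{H_E}$.
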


In particular, consider the Fej\'er--Riesz operator system $C^*(\bbz)_{(n)}$ from Example \ref{crucial example} for any $n\ge 2$. We have $C(S^1)\cap\sk(L^2(S^1))=\{0\}$ and $K_i(C(S^1))\cong\mathbb{Z}$ for both $i\in\{0,1\}$. Hence, the tuple $\big(C^*(\bbz)_{(n)},L^2(S^1),\mathrm{id}_{L^2(S^1)}\big)\in\mathrm{Op}_{\mathrm{OS}}$ provides an example of an operator system $E$ satisfying the hypotheses of \Tref{proposition for sequence of distinct elements}, which is moreover hyperrigid in $C^*(E)=C(S^1)$ (\Lref{hyperrigidity of Fejer-Riesz}). Take $F=C^*(\bbz)_{(n)}$, which is also hyperrigid in $C^*(F)=C(S^1)$. The Toeplitz type extension in this case is $X(F;E,\mathrm{id}):=\sk(\ell^2(\bbz))\otimes C^*(\bbz)_{(n)}+C^*(\bbz)_{(n)}\otimes\bbc$. Thus, the hypothesis in \Tref{proposition for sequence of distinct elements}, as well as in \Yref{P}, is satisfied. Iterating this $X(F;E,\mathrm{id})$ produces infinitely many pairwise non-isomorphic hyperrigid operator systems.
\medskip

\noindent\textbf{\textit{Counterexamples to the hyperrigidity conjecture:}} Suppose $F\subseteq \cb(H_F)$ is a counterexample to the hyperrigidity conjecture. In particular, all irreducible representations of $C^*(F)$ have the UEP with respect to $F$, and consequently we have $C^*_{env}(F)=C^*(F)$. Now, consider $\big(C^*(\bbz)_{(n)},L^2(S^1),\mathrm{id}_{L^2(S^1)}\big)\in\mathrm{Op}_{\mathrm{OS}}$ for some $n\ge 2$. Assume that $\mathrm{rank}(K_i(C^*(F)))<\infty$ for at least one $i\in\{0,1\}$. \Tref{proposition for sequence of distinct elements} shows that \Pref{1111} (see also \Yref{11111}) produces infinitely many pairwise non-isomorphic concrete counterexamples to the hyperrigidity conjecture.

The only question that remains whether there exists a counterexample $F\subseteq \cb(H_F)$ for which we have $\mathrm{rank}(K_i(C^*(F)))<\infty$ for at least one $i\in\{0,1\}$. The answer is yes. Let $S_c,\,0<c<1,$ be a counterexample to the hyperrigidity conjecture constructed by Scherer in \cite{S}. He constructed the split exact sequence
$$
0\longrightarrow \sk(L^2(S^3)\oplus\bbc)
\longrightarrow C^*(S_c)
\longrightarrow C(S^3)
\longrightarrow0.
$$
Since $K_0\big(C(S^3)\big)=K_1\big(C(S^3)\big)=\bbz$ and $K_0\big(\sk(L^2(S^3)\oplus\bbc)\big)=\bbz$ and $K_1\big(\sk(L^2(S^3)\oplus\bbc)\big)=0$, we have $\mathrm{rank}(K_0(C^*(S_c)))=2$ and $\mathrm{rank}(K_1(C^*(S_c)))=1$.

Therefore, if we take $F=S_c$, we obtain infinitely many pairwise non-isomorphic concrete counterexamples to the hyperrigidity conjecture through iteration.


\section{Propagation Number of Toeplitz type Extension}\label{Sec7}

In this final section, we compute the propagation number (\Dref{the reqd dfn}) of the unital operator system $X:=X(F;E,P)$ (recall \Dref{extension dfn}). The propagation number is an invariant of operator systems, introduced by Connes and van Suijlekom in \cite{CS}. This has been computed for concrete examples of $C(S^1)^{(n)},\,C^*(\bbz)_{(n)}$ \cite{CS,F1}, and for the operator systems arising through spectral truncations for tori and noncommutative tori $\mathcal{A}_{\Theta}$ \cite{LS,BGP}. The case of stabilization by compacts \cite{CS} and the case of minimal tensor products \cite{Koot} are also investigated.

We first introduce the following notion of the generalized propagation number.

\begin{dfn}\label{generalized prop number}
Let $A$ be a unital operator system and $(B,h)$ be a $C^*$-extension of $A$. The generalized propagation number of $A$ with respect to $(B,h)$ is defined by 
\[
\operatorname{prop}_{(B,h)}(A):=\min\{n\in \mathbb{N}: (h(A))^{\circ n}=B\},
\]
and if no such $n$ exists, we set $\,\operatorname{prop}_{(B,h)}(A):=\infty$.
\end{dfn}

The following lemma shows that $\operatorname{prop}_{(B,h)}(A)$ remains invariant for equivalent $C^*$-extensions of $A$.

\begin{lmma}
Let $A$ and $A_1$ be a unital operator systems such that there exists a unital complete order isomorphism $\phi: A\to A_1$. Let  $(B,\la)$ and $(B_1,\la_1)$ be $C^*$-extensions of $A$ and $A_1$, respectively. Suppose $(B,\la)\sim (B_1,\la_1)$, then 
\[
\operatorname{prop}_{(B,\lambda)}(A) = \operatorname{prop}_{(B_1,\lambda_1)}(A_1).
\]
\end{lmma}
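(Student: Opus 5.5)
The plan is to show that the $\ast$-isomorphism $\widehat\phi:B\to B_1$ carries the powers $(\lambda(A))^{\circ n}$ onto $(\lambda_1(A_1))^{\circ n}$ for every $n\in\mathbb{N}$. Once that is done, the sets $\{n:(\lambda(A))^{\circ n}=B\}$ and $\{n:(\lambda_1(A_1))^{\circ n}=B_1\}$ coincide, so their minima agree, or both sets are empty and both numbers equal $\infty$.

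The key identity comes from the commuting square in the definition of equivalence, namely $\widehat\phi\circ\lambda=\lambda_1\circ\phi$. Since $\phi:A\to A_1$ is bijective, this gives
\[
\widehat\phi(\lambda(A))=\lambda_1(\phi(A))=\lambda_1(A_1).
\]
Next, multiplicativity and linearity of $\widehat\phi$ give
\[
\widehat\phi(a_1\cdots a_n)=\widehat\phi(a_1)\cdots\widehat\phi(a_n).
\]
Hence $\widehat\phi$ maps $\mathrm{span}\{a_1\cdots a_n: a_i\in\lambda(A)\}$ onto $\mathrm{span}\{b_1\cdots b_n: b_i\in\lambda_1(A_1)\}$.

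To pass to norm closures, we use that a $\ast$-isomorphism between $C^*$-algebras is isometric. The image of the closure of a set is therefore the closure of the image. This yields
\[
\widehat\phi\big((\lambda(A))^{\circ n}\big)=(\lambda_1(A_1))^{\circ n}.
\]

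Finally, since $\widehat\phi$ is a bijection with $\widehat\phi(B)=B_1$, we get $(\lambda(A))^{\circ n}=B$ if and only if $(\lambda_1(A_1))^{\circ n}=B_1$. I expect no real obstacle; the only point needing care is the closure step, which is handled by the isometry (or by continuity of both $\widehat\phi$ and $\widehat\phi^{-1}$).
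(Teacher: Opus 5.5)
Your proposal is correct and is essentially the paper's argument: both use $\widehat\phi\circ\lambda=\lambda_1\circ\phi$ and bijectivity of $\phi$ to obtain $\widehat\phi\big((\lambda(A))^{\circ n}\big)=(\lambda_1(A_1))^{\circ n}$, so that $(\lambda(A))^{\circ n}=B$ if and only if $(\lambda_1(A_1))^{\circ n}=B_1$. The differences are only presentational: you justify the closure step explicitly and get both directions, including the case $\infty$, at once from bijectivity of $\widehat\phi$, whereas the paper proves $\operatorname{prop}_{(B_1,\lambda_1)}(A_1)\le\operatorname{prop}_{(B,\lambda)}(A)$ and then appeals to symmetry.
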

\begin{proof}
Let $\operatorname{prop}_{(B,\lambda)}(A) = n$. By the definition of equivalence, there exists a unital $\ast$-isomorphism $\hat\phi : B \to B_1$ such that $\hat\phi\circ\lambda =  \lambda_1 \circ \phi$. Thus,
\[
(\lambda_1(A_1))^{\circ n} =(\lambda_1( \phi(A)))^{\circ n}= (\hat\phi(\lambda(A)))^{\circ n} = \hat\phi\left((\lambda(A))^{\circ n}\right) = \hat\phi(B) = B_1.
\]
This implies $\operatorname{prop}_{(B_1,\lambda_1)}(A_1) \le n = \operatorname{prop}_{(B,\lambda)}(A)$. The reverse inequality follows by a symmetric argument.     
\end{proof}

\begin{lmma}\label{lemma of generalized prop number} 
Let $(B,h)$ be any $C^*$-extension of $A$. Then
\[
\operatorname{prop}_{(B,h)}(A)\geq \operatorname{prop}_{\left(C^*_{env}(A),i_A\right)}(A)=\operatorname{prop}(A).
\]
\end{lmma}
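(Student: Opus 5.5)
The plan is to use the universal property of the $C^*$-envelope from \Dref{envelope dfn} and push powers of $h(A)$ forward along the quotient map. The equality $\operatorname{prop}_{(C^*_{env}(A),i_A)}(A)=\operatorname{prop}(A)$ is simply \Dref{the reqd dfn} compared with \Dref{generalized prop number}, so only the inequality needs an argument.

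If $\operatorname{prop}_{(B,h)}(A)=\infty$ there is nothing to prove. Otherwise set $n:=\operatorname{prop}_{(B,h)}(A)$, so that $(h(A))^{\circ n}=B$. By \Dref{envelope dfn} there is a surjective $\ast$-homomorphism $\pi:B\to C^*_{env}(A)$ with $\pi\circ h=i_A$.

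The key step is to show that $\pi\big((h(A))^{\circ n}\big)=(i_A(A))^{\circ n}$. For the inclusion ``$\subseteq$'', multiplicativity gives $\pi(h(a_1)\cdots h(a_n))=i_A(a_1)\cdots i_A(a_n)$. By linearity, $\pi$ then maps the algebraic span of such products into $(i_A(A))^{\circ n}$. By continuity of $\pi$, it maps the closure into the closed set $(i_A(A))^{\circ n}$.

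Once this inclusion is in hand, surjectivity of $\pi$ gives
\[
C^*_{env}(A)=\pi(B)=\pi\big((h(A))^{\circ n}\big)\subseteq (i_A(A))^{\circ n}\subseteq C^*_{env}(A).
\]
Hence $(i_A(A))^{\circ n}=C^*_{env}(A)$, so $\operatorname{prop}(A)\le n$. The reverse inclusion ``$\supseteq$'' in the key step is therefore not needed.

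The only point needing care is the closure in the definition of $(\cdot)^{\circ n}$. The image under $\pi$ of a closed span need not be closed a priori, but the argument only uses containment in the closed target, so this causes no difficulty.
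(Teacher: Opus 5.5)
Your proposal is correct and follows essentially the same route as the paper: use the universal property to get a surjective $\ast$-homomorphism $\pi:B\to C^*_{env}(A)$ with $\pi\circ h=i_A$, then push $(h(A))^{\circ n}=B$ forward to obtain $C^*_{env}(A)\subseteq (i_A(A))^{\circ n}$. Your explicit handling of the case $\operatorname{prop}_{(B,h)}(A)=\infty$, and of the closure in $(\cdot)^{\circ n}$ via continuity of $\pi$, makes precise steps the paper leaves implicit.
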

\begin{proof}
By the universal property of $C^*$-envelopes, there exists a surjective $\ast$-homomorphism $\pi_A: B\to C^*_{env}(A)$ such that $\pi_A\circ h=i_A$. Let $\operatorname{prop}_{(B,h)}(A)=m$. Then, by definition, we have $(h(A))^{\circ m}=B$, which implies that $\pi_A((h(A))^{\circ m})=\pi_A(B)$. Since $\pi_A$ is a surjective $\ast$-homomorphism, we have
\[
C^*_{env}(A)=\pi_A(B)=\pi_A((h(A))^{\circ m})\subseteq (i_A(A))^{\circ m}\subseteq C^*_{env}(A).
\]
This yields us $(i_A(A))^{\circ m}= C^*_{env}(A)$. Therefore, we have $\operatorname{prop}_{(B,h)}(A)=m\geq \operatorname{prop}(A)$.
\end{proof}

Now we prove an important consequence of \Lref{helpful lemma}. Assume the same set-up as in that lemma. Fix a unit vector $\xi\in PH_E$. Define $V_\xi: H_D\to PH_E\otimes\ H_D$ by $V_\xi\eta:=\xi \otimes \eta$. Then $V_\xi$ is an isometry.

\begin{lmma}\label{A1}
The map $\,\tau : \mathcal{C}(X_D)\longrightarrow\mathcal{B}(H_D)$ given by  $\tau(T):= V^*_\xi T V_\xi$ is linear and contractive with $\tau(\mathcal{C}(X_D))=D$.
\end{lmma}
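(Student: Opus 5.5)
Linearity and contractivity are immediate: $\tau$ is a compression by the isometry $V_\xi$, so $\tau$ is linear and completely positive, and $\|\tau(T)\|\le\|V_\xi^*\|\,\|T\|\,\|V_\xi\|=\|T\|$. The substance of the statement is therefore the identity $\tau(\mathcal{C}(X_D))=D$. I will prove the two inclusions separately, using the basic identity already recorded in the proof of \Pref{hyperrigidity uep 2},
\[
V_\xi^*(S\otimes T)V_\xi=\langle S\xi,\xi\rangle\, T,\qquad S\in\mathcal{B}(PH_E),\ T\in\mathcal{B}(H_D),
\]
which is a direct computation on elementary tensors.

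For $\tau(\mathcal{C}(X_D))\subseteq D$, write an element of $\mathcal{C}(X_D)$ as $k+PcP\otimes 1_{H_D}$ with $k\in\sk(PH_E)\otimes D$ and $c\in C^*(E)$. For the second summand the identity gives $\tau(PcP\otimes 1_{H_D})=\langle PcP\xi,\xi\rangle 1_{H_D}\in\bbc\,1_{H_D}\subseteq D$, since $D$ is unital in $\mathcal{B}(H_D)$. For $k$, I first handle the algebraic tensor product: if $k=\sum_i k_i\otimes d_i$ with $k_i\in\sk(PH_E)$ and $d_i\in D$, then $\tau(k)=\sum_i\langle k_i\xi,\xi\rangle d_i\in D$. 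A general $k\in\sk(PH_E)\otimes D$ is a norm limit of such sums. Since $\tau$ is contractive and $D$ is norm closed, $\tau(k)\in D$.

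For $D\subseteq\tau(\mathcal{C}(X_D))$, let $e_\xi:=|\xi\rangle\langle\xi|\in\sk(PH_E)$. For $d\in D$ the element $e_\xi\otimes d$ lies in $\sk(PH_E)\otimes D\subseteq\mathcal{C}(X_D)$, and $\tau(e_\xi\otimes d)=\langle\xi,\xi\rangle d=d$ because $\xi$ is a unit vector.

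There is no real obstacle in this argument. The only point requiring care is passing from the algebraic tensor product to its closure $\sk(PH_E)\otimes D$, and that is settled by the continuity of $\tau$ together with the closedness of $D$.
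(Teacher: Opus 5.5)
Your proof is correct and follows the paper's argument essentially verbatim. The paper also uses the elementary-tensor identity $V_\xi^*(S\otimes d)V_\xi=\langle S\xi,\xi\rangle d$ together with contractivity and the closedness of $D$ to get $\tau(\mathcal{C}(X_D))\subseteq D$, and the rank-one projection onto $\bbc\,\xi$ to get the reverse inclusion.
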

\begin{proof}
Clearly, $\tau$ is linear and contractive. We have
\begin{IEEEeqnarray*}{lCl}
\langle \tau(S\otimes d)(u), v\rangle &=& \langle V^*_\xi (S\otimes d) V_\xi(u),v\rangle \\
&=& \langle (S\otimes d) V_\xi (u), V_\xi (v)\rangle \\
&=& \langle(S\otimes d)(\xi \otimes u), \xi \otimes v\rangle \\
&=& \langle S\xi\otimes du, \xi\otimes v\rangle \\
&=& \langle S\xi,\xi\rangle \cdot \langle du, v\rangle \\
&=& \langle(\langle S\xi,\xi\rangle d)(u),v\rangle \quad \text{for all } u,v \in H_D.
\end{IEEEeqnarray*}
Therefore, $\tau(T)\in D$ for all $T \in \sk(PH_E)\odot D+ PC^*(E)P\otimes \bbc$. Since $\tau$ is contractive, it follows that $\tau(\mathcal{C}(X_D))\subseteq D$. For reverse inclusion, let $S$ be the orthogonal projection onto $\mathbb{C}.\,\xi\subseteq PH_E$. Then $S\otimes d\in \mathcal{C}(X_D)$ and $\tau(S\otimes d)=d$ for all $d\in D$.
\end{proof}

Recall from Section $4$ that for $F\subseteq\mathcal{B}(H_F)$, we have the $C^*$-envelope $(C^*_{env}(F),i_F)$ and $\lambda(X)= \sk(PH_E)\otimes i_F(F) + PEP\otimes \bbc$. Recall the derivation of \Lref{Xon}, replacing $X$ by $\lambda(X)$ and $F$ by $i_F(F)$, to obtain the following:
\begin{equation}\label{lambda X}
(\lambda(X))^{\circ n}=\overline{\sk(PH_E)\otimes (i_F(F))^{\circ n} + PE^{\circ n}P\otimes\bbc} \quad \text{for all } n\in \mathbb{N}. 
\end{equation}
In view of \Tref{Main theorem 1}, setting $D=C^*_{env}(F)$ in \Lref{sigma map used in prop number} and \Lref{A1}, we have the following facts:
\begin{enumerate}[$(i)$]
\item The map $\widetilde\sigma: C^*_{env}(X)\to C^*(E)$, defined by $\widetilde\sigma(k+PcP\otimes 1)=c$, is a surjective $\ast$-homomorphism with $\ker(\widetilde\sigma)=\sk(PH_E)\otimes C^*_{env}(F)$.
\item The map $\tau: C^*_{env}(X)\to C^*_{env}(F)$ is surjective, linear, and contractive.
\end{enumerate}

\begin{thm}\label{main result2}
For the unital operator system $X=\mathcal{K}(PH_E)\otimes_{\min} F+PEP\otimes\bbc\subseteq\mathcal{B}(PH_E\otimes H_F)$, we have $$\operatorname{prop}(X)=\max\big\{\operatorname{prop}(F)\,,\,\operatorname{prop}_{\left(C^{*}(E),i\right)}(E)\big\}\geq\max\{\operatorname{prop}(F)\,,\,\operatorname{prop}(E)\}\,.$$
\end{thm}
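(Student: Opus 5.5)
We identify $\operatorname{prop}(X)$ with the least $n$ such that $(\lambda(X))^{\circ n}=C^*_{env}(X)$, where by \Tref{Main theorem 1} we have $C^*_{env}(X)=\sk(PH_E)\otimes C^*_{env}(F)+PC^*(E)P\otimes\bbc$. We then compare this with the two quantities on the right using \eqref{lambda X}. Set $m:=\max\{\operatorname{prop}(F),\operatorname{prop}_{(C^*(E),i)}(E)\}$. The final inequality $\ge\max\{\operatorname{prop}(F),\operatorname{prop}(E)\}$ is immediate from \Lref{lemma of generalized prop number} applied to $(C^*(E),i)$. So the real content is the equality, and we prove two inequalities.

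\emph{Upper bound $\operatorname{prop}(X)\le m$.} Assume $m<\infty$. The sequences $(i_F(F))^{\circ n}$ and $E^{\circ n}$ are increasing in $n$ because of the unit. Hence $(i_F(F))^{\circ m}=C^*_{env}(F)$ and $E^{\circ m}=C^*(E)$. By \eqref{lambda X}, $(\lambda(X))^{\circ m}$ is the closure of $\sk(PH_E)\otimes C^*_{env}(F)+PC^*(E)P\otimes\bbc$. This is already closed, since it is the $C^*$-algebra $\mathcal{C}(X_D)$ of \Lref{helpful lemma}$(iii)$ with $D=C^*_{env}(F)$. It therefore equals $C^*_{env}(X)$.

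\emph{Lower bound $\operatorname{prop}(X)\ge m$.} Suppose $(\lambda(X))^{\circ n}=C^*_{env}(X)$. We push this forward through the two maps listed after \eqref{lambda X}.

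First, apply the surjective $\ast$-homomorphism $\widetilde\sigma$. It kills $\sk(PH_E)\otimes(i_F(F))^{\circ n}$ and sends $PeP\otimes1$ to $e$ for $e\in E^{\circ n}$. Since it is continuous, it maps the closure in \eqref{lambda X} into $\overline{E^{\circ n}}=E^{\circ n}$, which is closed by definition. Surjectivity then gives $E^{\circ n}=C^*(E)$, i.e.\ $n\ge\operatorname{prop}_{(C^*(E),i)}(E)$.

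Second, apply the contraction $\tau=V_\xi^*(\cdot)V_\xi$ of \Lref{A1}. A direct computation gives $\tau(S\otimes d)=\langle S\xi,\xi\rangle d$, so $\tau$ maps $\sk(PH_E)\odot(i_F(F))^{\circ n}$ into $(i_F(F))^{\circ n}$. It also maps $PeP\otimes1$ to $\langle PeP\xi,\xi\rangle 1$, which lies in $(i_F(F))^{\circ n}$ because the unit is there. By continuity, $\tau$ maps $(\lambda(X))^{\circ n}$ into the closed space $(i_F(F))^{\circ n}$. Since $\tau$ is onto $C^*_{env}(F)$, we get $(i_F(F))^{\circ n}=C^*_{env}(F)$, i.e.\ $n\ge\operatorname{prop}(F)$.

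Together these give $\operatorname{prop}(X)\ge m$, and if $m=\infty$ they show that no finite $n$ works. The main care point is the justification of \eqref{lambda X} for $\lambda(X)$, namely transporting \Lref{C*(X) contains that} and \Lref{Xon} to the concrete picture of $C^*_{env}(X)$. We must check that the perturbations produced by commutators $[P,e]$, which are compact, land in $\sk(PH_E)\otimes\bbc 1\subseteq\sk(PH_E)\otimes(i_F(F))^{\circ n}$. The other care point is closedness of the spaces used in the continuity arguments.
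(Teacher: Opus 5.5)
Your proposal is correct and follows essentially the same route as the paper. The upper bound comes from \eqref{lambda X}. The lower bound comes from pushing $(\lambda(X))^{\circ n}=C^*_{env}(X)$ forward through the quotient $\widetilde\sigma$ onto $C^*(E)$ and through the compression $\tau=V_\xi^*(\cdot)V_\xi$ onto $C^*_{env}(F)$. You argue directly where the paper argues by contradiction, and you spell out the closedness and monotonicity points that the paper leaves implicit; both differences are cosmetic.
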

\begin{proof}
The inequality `$\ge$' follows from \Lref{lemma of generalized prop number}. Denote $N:=\max\big\{\operatorname{prop}(F)\,,\,\operatorname{prop}_{\left(C^{*}(E),i\right)}(E)\big\}$. First, we show that if $n < N$ (including the case $N=\infty$), then $(\lambda(X))^{\circ n}\neq C^*_{env}(X)$. On the contrary, assume that $(\lambda(X))^{\circ n}= C^*_{env}(X)$ for some $n < N$.

\noindent\textbf{Case 1.} Suppose $n < \operatorname{prop}_{\left(C^{*}(E),i\right)}(E)$, i.e. $E^{\circ n} \subsetneq C^*(E)$.
    
Since $(\lambda(X))^{\circ n}= C^*_{env}(X)$, applying the surjective $\ast$-homomorphism $\widetilde\sigma: C^*_{env}(X)\to C^*(E)$ we get
\[
C^*(E)=\widetilde\sigma (C^*_{env}(X))=\widetilde\sigma\left((\lambda(X))^{\circ n}\right)\subseteq(\widetilde\sigma(\lambda(X)))^{\circ n}.
\]
Recall that $\lambda(X)= \sk(PH_E)\otimes i_F(F) + PEP\otimes \bbc$. Thus, $\widetilde\sigma (\lambda(X))\subseteq E$, which implies that $(\widetilde\sigma(\lambda(X)))^{\circ n}\subseteq E^{\circ n}\subsetneq C^*(E)$. Hence, we have 
\[
C^*(E)=\widetilde\sigma (C^*_{env}(X))=\widetilde\sigma\left((\lambda(X))^{\circ n}\right)\subseteq(\widetilde\sigma(\lambda(X)))^{\circ n}\subseteq E^{\circ n}\subsetneq C^*(E),
\]
a contradiction.

\noindent\textbf{Case 2.} Suppose $n < \operatorname{prop}(F)$, i.e. $(i_F(F))^{\circ n}\subsetneq C^*_{env}(F)$.
    
From \eqref{lambda X}, we have $(\lambda(X))^{\circ n}=\overline{\sk(PH_E)\otimes (i_F(F))^{\circ n} + PE^{\circ n}P\otimes\bbc}$. Applying the map $\tau$ from \Lref{A1}, observe that 
\[
\tau\left(\sk(PH_E)\otimes (i_F(F))^{\circ n} + PE^{\circ n}P\otimes\bbc\right)\subseteq (i_F(F))^{\circ n},
\]
which follows a similar line to \Lref{A1}. Since $\tau$ is surjective, linear, and contractive, we have 
\begin{align*}
C^*_{env}(F) = \tau(C^*_{env}(X))=\tau \left((\lambda(X))^{\circ n}\right)&=\tau\left(\overline{\sk(PH_E)\otimes (i_F(F))^{\circ n} + PE^{\circ n}P\otimes\bbc}\right)\\
&\subseteq \overline{\tau\left(\sk(PH_E)\otimes (i_F(F))^{\circ n} + PE^{\circ n}P\otimes\bbc\right)}\\
&\subseteq (i_F(F))^{\circ n} \subsetneq C^*_{env}(F),
\end{align*}
a contradiction.

This completes the proof that if $n < N$, then $(\lambda(X))^{\circ n}\neq C^*_{env}(X)$, and consequently $\,\operatorname{prop}(X)\geq N$.

Now, we show that if $N < \infty$ and $n \geq N$, then $(\lambda(X))^{\circ n}= C^*_{env}(X)$, and consequently $\,\operatorname{prop}(X)\leq N$. From \eqref{lambda X}, we have $(\lambda(X))^{\circ n}=\overline{\sk(PH_E)\otimes (i_F(F))^{\circ n} + PE^{\circ n}P\otimes\bbc}$. If $n \geq N$, then $(i_F(F))^{\circ n}=C^*_{env}(F)$ and $E^{\circ n}=C^*(E)$. Thus, $(\lambda(X))^{\circ n}=C^*_{env}(X)$, which implies that $\operatorname{prop}(X)\leq N$.
\end{proof}

It is clear that the hyperrigid systems underlying the quantum spheres obtained in \Yref{for fruitful remark} all have infinite propagation number.

\begin{crlre}
If $E$ is hyperrigid, then $\operatorname{prop}(X)=\max\{\operatorname{prop}(E)\,,\,\operatorname{prop}(F)\}$.
\end{crlre}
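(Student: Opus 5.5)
The argument is a direct reduction to \Tref{main result2}. That theorem gives
\[
\operatorname{prop}(X)=\max\big\{\operatorname{prop}(F),\,\operatorname{prop}_{(C^*(E),i)}(E)\big\},
\]
so it suffices to show that $\operatorname{prop}_{(C^*(E),i)}(E)=\operatorname{prop}(E)$ whenever $E$ is hyperrigid. For that, it is enough to know that the \v{S}ilov ideal of $C^*(E)$ relative to $E$ is zero, so that $(C^*(E),i)$ is itself (equivalent to) the $C^*$-envelope $(C^*_{env}(E),i_E)$.

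First, recall why hyperrigidity forces $C^*_{env}(E)=C^*(E)$; this was already used in part $(ii)$ of Remark \ref{motivation}. Let $J$ be the \v{S}ilov ideal of $C^*(E)$ relative to $E$, and let $q:C^*(E)\to C^*(E)/J$ be the quotient map. Then $q|_E$ is a complete isometry. Faithfully represent $C^*(E)/J\subseteq\mathcal{B}(K)$. By Arveson's extension theorem, the UCP map $(q|_E)^{-1}:q(E)\to E\subseteq C^*(E)\subseteq\mathcal{B}(H_E)$ extends to a UCP map $\psi:\mathcal{B}(K)\to\mathcal{B}(H_E)$. The composition $\psi\circ q:C^*(E)\to\mathcal{B}(H_E)$ is then UCP and agrees with the identity representation $\mathrm{id}_{C^*(E)}$ on $E$. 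Hyperrigidity gives $\mathrm{id}_{C^*(E)}$ the UEP, so $\psi\circ q=\mathrm{id}$. In particular $q$ is injective, hence $J=\{0\}$.

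Next, the universal surjection $\pi_E:C^*(E)\to C^*_{env}(E)$ satisfies $\pi_E\circ i=i_E$ and has kernel $J=\{0\}$. It is therefore a $\ast$-isomorphism, so $(C^*(E),i)\sim(C^*_{env}(E),i_E)$ via $\phi=\mathrm{id}_E$ and $\widehat\phi=\pi_E$. The invariance lemma for generalized propagation numbers under equivalent $C^*$-extensions, proved just before \Lref{lemma of generalized prop number}, then gives $\operatorname{prop}_{(C^*(E),i)}(E)=\operatorname{prop}(E)$. One can also see this directly, since $\pi_E$ maps $E^{\circ n}$ onto $(i_E(E))^{\circ n}$.

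Substituting this into \Tref{main result2} yields the corollary. The only point needing some care is the first step, the identification of the \v{S}ilov ideal as zero, and it reduces to a standard Arveson extension argument.
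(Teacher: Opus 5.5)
Your proposal is correct and follows the paper's route. The paper states this corollary without proof as an immediate consequence of \Tref{main result2}, using the fact (already invoked in part $(ii)$ of Remark \ref{motivation}) that hyperrigidity forces $C^*_{env}(E)=C^*(E)$, which gives $\operatorname{prop}_{(C^*(E),i)}(E)=\operatorname{prop}(E)$. Your Arveson-extension argument that the \v{S}ilov ideal vanishes supplies that standard fact, which the paper takes for granted.
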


The following example shows that an upper bound for $\operatorname{prop}(X)$ in \Tref{main result2} is not possible, in general.
\begin{xmpl}\rm
Let $H_E=L^2([0,1])$ and consider the multiplication operator $(M_x\xi)(x)=x\xi(x)$. Set $E:=\operatorname{span}\{1,M_x\}\subseteq \mathcal{B}(H_E)$ and take $P:=I_{H_E}$. Then $C^*(E)=C([0,1])$ and clearly $\operatorname{prop}_{(C^*(E),\iota)}(E)=\infty$, and consequently $\operatorname{prop}(X(E;E,\mbox{Id}))=\infty$. We show that $\operatorname{prop}(E)=1$.

Consider the unital $\ast$-homomorphism $\widetilde{\psi}:C([0,1])\rightarrow\mathbb \bbc^2$ given by $f\mapsto (f(0),f(1))$. The map $\psi:E\rightarrow\mathbb C^2$ defined by $\psi(\alpha+\beta M_x)=(\alpha,\alpha+\beta)$, for $\alpha,\beta\in\bbc$, is the restriction of $\widetilde{\psi}$ to $E$, and hence a UCP. It is clear that $\psi$ is bijective. We show that $\psi^{-1}$ is also completely positive. Observe that $\psi^{-1}(s,t)=s\,1+(t-s)M_x$.
Fix any $n\in\mathbb N$ and consider $(S,T)\in M_n(\mathbb C^2)_+$. Since $M_n(\mathbb C^2)=M_n(\mathbb C)\oplus M_n(\mathbb C)$,
this is equivalent to $S\geq0,\,T\geq0$. The $n$-th amplification of $\psi^{-1}$ is
\[
\psi^{-1,(n)}(S,T)=S\otimes 1+(T-S)\otimes M_x.
\]
Using the canonical identification $M_n(B(H_E))\cong B(H_E^n)$ and $H_E^n=L^2([0,1])^n\cong L^2([0,1],\mathbb C^n)$, the operator $\psi^{-1,(n)}(S,T)$ acts on $\xi=(\xi_1,\cdots,\xi_n)\in L^2([0,1],\mathbb C^n)$ by
\[
\bigl(\psi^{-1,(n)}(S,T)\xi\bigr)(x)=\bigl(S+x(T-S)\bigr)\xi(x).
\]
Thus, it is the multiplication operator associated with the matrix-valued function
\[
\Phi(x):=(1-x)S+xT,\qquad x\in[0,1].
\]
Being a convex combination of the positive matrices $S$ and $T$, we have $\Phi(x)\ge 0$. For $\xi\in L^2([0,1],\mathbb C^n)$,
\[
\left\langle
\psi^{-1,(n)}(S,T)\xi\,,\,\xi
\right\rangle_{L^2}=\int_0^1
\left\langle
\Phi(x)\xi(x)\,,\,\xi(x)
\right\rangle_{\mathbb C^n}\,dx.
\]
Since $\Phi(x)\geq0$ for every $x\in[0,1]$, and $\xi$ was arbitrary, we get that $\psi^{-1,(n)}(S,T)\geq 0$. This completes the proof that $\psi^{-1}$ is completely positive.

Therefore, $\psi:E\rightarrow\mathbb \bbc^2$ is a unital complete order isomorphism. In particular, $(\bbc^2,\psi)$ is a $C^*$-extension of $E$ and $\psi(E)=\bbc^2$. By Lemma \ref{lemma of generalized prop number}, $1\leq \operatorname{prop}(E)\leq \operatorname{prop}_{(\bbc^2,\psi)}(E)=1$ proving that $\operatorname{prop}(E)=1$.\qed
\end{xmpl}

As an immediate consequence of \Tref{main result2}, we obtain the following results.

\begin{crlre}\label{main result22}
If $(E,H_E,P)\in\mathrm{Op}_{C^*}\subseteq\mathrm{Op}_{\mathrm{OS}}$, then $\operatorname{prop}(X)=\operatorname{prop}(F)$.
\end{crlre}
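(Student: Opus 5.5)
When $(E,H_E,P)\in\mathrm{Op}_{C^*}$, the operator system $E$ is itself a unital $C^*$-algebra $A\subseteq\cB(H_E)$. The plan is to reduce \Tref{main result2} to the single number $\operatorname{prop}(F)$ by showing that the generalized propagation number of $E$ is $1$.

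First, since $E$ is a $C^*$-algebra, we have $C^*(E)=E$. The $C^*$-extension $(C^*(E),i)$ is then $(E,\mathrm{id})$, and $E^{\circ 1}=E=C^*(E)$. By \Dref{generalized prop number}, this gives $\operatorname{prop}_{(C^*(E),i)}(E)=1$.

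Plugging this into \Tref{main result2} yields
\[
\operatorname{prop}(X)=\max\{\operatorname{prop}(F),1\}.
\]
Every propagation number lies in $\bbn\cup\{\infty\}$ with minimum value $1$, so $\operatorname{prop}(F)\ge 1$. Hence the maximum equals $\operatorname{prop}(F)$, including the case $\operatorname{prop}(F)=\infty$.

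There is essentially no obstacle. The only point to check is that \Tref{main result2} applies to elements of $\mathrm{Op}_{C^*}$, which holds because $\mathrm{Op}_{C^*}\subseteq\mathrm{Op}_{\mathrm{OS}}$. One should also confirm that closure issues in the definition of $(\cdot)^{\circ n}$ do not matter here: $E$ is closed, being a $C^*$-algebra, so $E^{\circ 1}=\overline{E}=E$.
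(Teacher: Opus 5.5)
Your proposal is correct and matches the paper, which presents this corollary as an immediate consequence of \Tref{main result2}. Since $E$ is a closed $C^*$-algebra, $E^{\circ 1}=C^*(E)$ gives $\operatorname{prop}_{(C^*(E),i)}(E)=1$, and the maximum in \Tref{main result2} collapses to $\operatorname{prop}(F)$.
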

\begin{crlre}\label{main result3}
Let $A$ be a simple infinite-dimensional unital $C^*$-algebra with faithful representation $\pi:A\to\mathcal{B}(H)$. Then, $\operatorname{prop}\big(\sk(H)\otimes_{\min}F+\pi(A)\otimes\bbc\big)=\operatorname{prop}(F)$.
\end{crlre}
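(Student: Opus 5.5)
The statement to prove is \Yref{main result3}. I would derive it directly from \Tref{main result2}, and specifically from its specialization \Yref{main result22}. The plan is to view $\sk(H)\otimes_{\min}F+\pi(A)\otimes\bbc$ as the Toeplitz type extension $X(F;\pi(A),\mathrm{id}_H)$. I would then check that the triple $(\pi(A),H,\mathrm{id}_H)$ is a (degenerate) element of $\mathrm{Op}_{C^*}$. Once that is done, the generalized propagation number term $\operatorname{prop}_{(C^*(E),i)}(E)$ in \Tref{main result2} collapses to $1$.

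\textbf{Membership in $\mathrm{Op}_{C^*}$.} First I would verify the two defining conditions for $P=\mathrm{id}_H$.

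The commutator condition $[\mathrm{id}_H,\pi(a)]=0\in\sk(H)$ is trivial.

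The second condition reduces to $\pi(A)\cap\sk(H)=\{0\}$. Here I would use the argument already recorded in the examples of \Sref{Sec3}:
\begin{itemize}
\item $\pi^{-1}(\sk(H))$ is a closed two-sided ideal of $A$;
\item since $A$ is simple, this ideal is either $\{0\}$ or $A$;
\item it cannot be $A$, because then $\pi(1)=1_H$ would be compact, forcing $H$ finite-dimensional, and hence $A\cong\pi(A)$ finite-dimensional, contradicting the hypothesis.
\end{itemize}
To be careful, I would either assume $\pi$ is unital, as elsewhere in the paper, or replace $H$ by $\pi(1)H$. Alternatively: a simple $C^*$-algebra containing a nonzero compact element in a faithful irreducible-type situation would be elementary, hence $\sk$, which is non-unital unless finite-dimensional. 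Either way, $\pi(A)\cap\sk(H)=\{0\}$.

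Since $\mathrm{id}_H$ is infinite-dimensional, this gives $(\pi(A),H,\mathrm{id}_H)\in\mathrm{Op}_{C^*}$.

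\textbf{Applying \Tref{main result2}.} With $E=\pi(A)$, which is a $C^*$-algebra, we have $C^*(E)=E$, so $E^{\circ 1}=C^*(E)$ and $\operatorname{prop}_{(C^*(E),i)}(E)=1$. Since $\operatorname{prop}(F)\ge 1$ always, \Tref{main result2} yields
\[
\operatorname{prop}(X)=\max\{\operatorname{prop}(F),1\}=\operatorname{prop}(F).
\]
This is exactly the content of \Yref{main result22}, and the identification $PEP=\pi(A)$ for $P=\mathrm{id}_H$ gives the displayed operator system.

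The only mildly delicate step is the justification that $\pi(A)\cap\sk(H)=\{0\}$, which needs simplicity and infinite-dimensionality. Everything else is a direct substitution.
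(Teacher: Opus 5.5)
Your proposal is correct and is essentially the paper's own argument. You realize the system as $X(F;\pi(A),\mathrm{id}_H)$ with $(\pi(A),H,\mathrm{id}_H)$ a degenerate element of $\mathrm{Op}_{C^*}$; the paper merely asserts $\pi(A)\cap\sk(H)=\{0\}$ in the examples of Section~\ref{Sec3}, and your ideal argument via $\pi^{-1}(\sk(H))$ supplies that justification. You then apply \Tref{main result2}, i.e.\ \Yref{main result22}, to get $\operatorname{prop}(X)=\max\{\operatorname{prop}(F),1\}=\operatorname{prop}(F)$, since $\pi(A)$ is already a $C^*$-algebra.
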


The above two corollaries show that the class of Toeplitz type extensions contains
several subclasses, each with a constant propagation number.


\bigskip

\noindent{\sc Vibhor Bhatt} (\texttt{vibhorbhatt25@iitk.ac.in})\\
{\footnotesize Department of Mathematics,
Indian Institute of Technology, Kanpur 208016, India}
\vspace*{.5cm}

\noindent{\sc Satyajit Guin} (\texttt{satyamath@gmail.com; sguin@iitk.ac.in})\\
{\footnotesize Department of Mathematics,
Indian Institute of Technology, Kanpur 208016, India}
\vspace*{.5cm}

\noindent{\sc Palash Sharma} (\texttt{palashsharma8765434124@gmail.com; ms22001@iisermohali.ac.in})\\
{\footnotesize Department of Mathematical Sciences,
Indian Institute of Science Education and Research, Mohali,\\ Knowledge City, Sector 81, Punjab 140 306, India}

\end{document}